\def\FF{\mathbb F}                         
\def\CC{\mathbb C}                         
\def\KK{\mathbb K}                         
\def\FFq2{{\mathbb F}_{q^2}}
\def\rank{\mathop{\mathrm {rk}}\nolimits}  
\def\diag{\mathop{\mathrm {diag}}\nolimits}  
\def\tr{\mathop{\top}\nolimits}     
\def\Tr{\mathop{\mathrm{Tr}}\nolimits}     
\def\N{\mathop{\mathrm{N}}\nolimits}     
\def\hgl{{\mathop{\mathcal{HGL}}\nolimits}_n(\mathbb{F}_{q^2})}
\def\hgldva{{\mathop{\mathcal{HGL}}\nolimits}_2(\mathbb{F}_{q^2})}
\newtheorem{thm}{Theorem}[section]
\newtheorem{lemma}[thm]{Lemma}
\newtheorem{cor}[thm]{Corollary}
\newtheorem{prop}[thm]{Proposition}
\theoremstyle{definition}
\theoremstyle{remark}
\numberwithin{equation}{section}
\newcommand{\cal}{\mathcal}
\newcommand{\inv}[1]{{\overline{#1}}}
\newcounter{myenumi}
\newsavebox{\cmm}
\savebox{\cmm}{\indent}
\newenvironment{myenumerate}[1]{
\begin{list}{
{\bf #1~\themyenumi}. } {\labelwidth=0pt
\labelsep=0pt\leftmargin=0pt\usecounter{myenumi}} }{\end{list}}
\begin{document}

\title{Adjacency preservers on invertible hermitian matrices~I}

\author[M.~Orel]{Marko Orel}
\address{University of Primorska, FAMNIT, Glagolja\v{s}ka 8, 6000 Koper, Slovenia}
\address{IMFM, Jadranska 19, 1000 Ljubljana, Slovenia}
\address{University of Primorska, IAM, Muzejski trg 2, 6000 Koper, Slovenia}

\email{marko.orel@upr.si}

\subjclass[2010]{Primary 15A03, 15A33, 15B57, 05C50; Secondary 15A15, 05C15, 51A50, 51B20}
\keywords{Adjacency preserver, Core, Finite field, Hermitian matrix, Rank, Petersen graph}

\begin{abstract}
Hua's fundamental theorem of geometry of hermitian matrices characterizes all bijective maps on the space of all hermitian matrices, which preserve adjacency in both directions. In this and subsequent paper we characterize maps on the set of all invertible hermitian matrices over a finite field, which preserve adjacency in one direction. This and author's previous result are used to obtain two new results related to maps that preserve the `speed of light' on finite Minkowski space-time.

In this first paper it is shown that maps that preserve adjacency on the set of all invertible hermitian matrices over a finite field are necessarily bijective, so the corresponding graph on invertible hermitian matrices, where edges are defined by the adjacency relation, is a core. Besides matrix theory, the proof relies on results from several other mathematical areas, including spectral and chromatic graph theory, and projective geometry.
\end{abstract}

\maketitle

\section{Introduction}

Let $\mathcal{M}$ be a set of matrices over a field (or a division ring) of the same size. Matrices $A,B\in \mathcal{M}$ are \emph{adjacent} if the rank $\rank(A-B)$ is minimal nonzero. A map $\Phi : \mathcal{M}\to \mathcal{M}$  preserves \emph{adjacency in both directions} if $\Phi(A)$ and $\Phi(B)$ are adjacent if and only if $A$ and $B$ are adjacent. We say that $\Phi$ \emph{preserves adjacency} (in one direction) if $\Phi(A)$ and $\Phi(B)$ are adjacent, whenever $A$ and $B$ are adjacent.

In the middle of 20th century Hua~\cite{hua1,hua2,hua3,hua4,hua5,hua6,hua7,hua8} initiated the study of bijective maps $\Phi : \mathcal{M}\to \mathcal{M}$ that preserves adjacency in both directions, for various matrix spaces~$\mathcal{M}$. Characterization of such maps is commonly known as the fundamental theorem of geometry of $\mathcal{M}$~\cite{wan}. These results can be interpreted in the language of graph theory. Let $\Gamma$ be the graph with vertex set $\mathcal{M}$, where  $\{A,B\}$ is an edge if and only if $A$ and $B$ are adjacent. Then the fundamental theorem describes all automorphisms of $\Gamma$, and the maps that preserves adjacency are endomorphisms of $\Gamma$. In our case it will be beneficial to use both styles of terminology simultaneously.

If $\mathcal{M}$ is the set of all hermitian matrices over a field with involution (see Section~2 for detailed definitions), then $A,B\in \mathcal{M}$ are adjacent if $\rank(A-B)=1$. If the involution is not the identity map, then the fundamental theorem says that the automorphisms of the corresponding graph are precisely the maps of the form $\Phi(A)=\lambda PA^{\sigma}P^{\ast}+B$, where $P$ is an invertible matrix with its conjugate transpose~$P^{\ast}$, $B$ is hermitian, $\lambda$ is a nonzero scalar that is fixed by the involution, and $\sigma$ is a field automorphism that commutes with the involution and is applied entry-wise to $A$. Fundamental theorems of geometry of rectangular, alternate, symmetric, and hermitian matrices are summarized in the book~\cite{wan}. In the last decade there were several attempts to reduce the assumptions in these theorems. For hermitian and symmetric matrices over several division rings, preserving adjacency in both directions was reduced to one direction only~\cite{huang_hofer_wan}. In $2\times 2$ case adjacency preservers were characterized even without bijectivity assumption~\cite{huang_W.L.}. For matrices of arbitrary order, such result was obtained for complex hermitian matrices~\cite{huang_semrl} and subsequently for real symmetric matrices~\cite{legisa}.
Optimal version of fundamental theorem, for rectangular matrices over EAS division rings, was proved very recently in a paper with more than 80 pages~\cite{dolgin}. Since the underlying field in~\cite{huang_semrl, legisa, huang_W.L., dolgin} was allowed to be infinite, the removal of bijectivity assumption resulted in an existence of degenerate adjacency preservers, which contain only pairwise adjacent matrices in their images. For finite fields it is not clear in general, if such maps exist. Graph admitting this kind of endomorphisms is said to have a complete core, while graph whose endomorphisms are all automorphisms is said to be a core~\cite{godsil, godsil_knjiga}. In~\cite{FFA_Orel} it was shown that the graph formed by hermitian matrices over a finite field is a core, so its endomorphisms are characterized by the fundamental theorem. Same type of a result was obtained for $n\times n$ symmetric matrices over a finite field if $n\geq 3$, while in $2\times 2$ case the graph  possesses a complete core~\cite{JACO}. Recently it was proved that any (finite) distance transitive graph is either a core or it has a complete core~\cite{godsil}, though it is mentioned that is often difficult to decide which of the two possibilities occurs. It should be noted that graphs formed by all rectangular, alternate, or hermitian matrices over a finite field are all distance transitive~\cite[Section~9.5]{brouwer_cohen_neumaier}.

If $\mathcal{M}$ is just a set and not a vector space, not much of research has been done in this area.
In this and subsequent paper~\cite{obrnljive2} the aim is to characterize all maps~$\Phi$ that preserve adjacency on the set $\hgl$ of all $n\times n$ invertible hermitian matrices over a finite field with $q^2$ elements. We emphasize that we do not assume that $\Phi$ is bijective, while adjacency is assumed to be preserved in just one direction. In fact, in this paper we prove that bijectivity is redundant, that is, the graph formed by the set $\hgl$, which we denote again by $\hgl$, is a core. In the subsequent paper we obtain an analogous result of fundamental theorem, which, in the case of invertible hermitian matrices, is not yet known. Unlike the case of all hermitian matrices over a finite field~\cite{FFA_Orel}, here adjacency preservers do not result to be necessarily affine maps. An example of such is the inverting map $A\mapsto A^{-1}$. Moreover, graph $\hgl$ is not distance transitive if $q\geq 3$, since its maximal cliques contain either $q$ or $q-1$ vertices~(cf. Section~2).

While adjacency preservers have a long history of applications in `preserver problems' (see e.g. survey papers~\cite{survey2,survey3,survey1}), at least two
motives to study such maps on the set $\hgl$ arise from other mathematical areas.

The graph $\hgl$ generalize arguably the most famous graph, i.e., the Petersen graph, which is obtained as the smallest case $n=2=q$. Unfortunately the techniques used in this paper do not work for the two smallest fields. Case~$q=2$ is studied separately, together with the analogous graph formed by invertible binary symmetric matrices, which generalizes the Coxeter graph~\cite{petersen_coxeter}.

A 4-dimensional Minkowski space-time $M_4$ can be described as a 4-dimensional real vector space equipped with an indefinite inner product $$({\bf r}_1,{\bf r}_2):=-x_1x_2-y_1y_2-z_1z_2+c^2t_1t_2,$$
between \emph{events} ${\bf r}_1:=(x_1,y_1,z_1,c t_1)^{\tr}$ and  ${\bf r}_2:=(x_2,y_2,z_2,c t_2)^{\tr}$. Here $c$ denotes the speed of light and units of measurement are chosen such that $c=1$. A light signal can pass between two events ${\bf r}_1$ and  ${\bf r}_2$ if and only if
\begin{equation}\label{light}
({\bf r}_1-{\bf r}_2,{\bf r}_1-{\bf r}_2)=0.
\end{equation}
A map on a Minkowski space-time preserves the speed of light if it preserves relation~(\ref{light}).
Bijective maps that preserve the speed of light in both directions were characterized in~\cite{aleksandrov1950}. It was observed in~\cite{hua_knjiga}  that characterization of such maps is equivalent to the fundamental theorem of geometry of $2\times 2$ complex hermitian matrices. Maps that preserve relation~(\ref{light}) in both directions on an open connected  subset in $M_4$ were characterized in~\cite{aleksandrov,lester}. If this result
is reinterpreted in terms of $2\times 2$ matrices as in~\cite{hua_knjiga}, and applied
for the complement of the light cone $\{{\bf r} : ({\bf r},{\bf r})=0\}$, then we deduce that any map $\Phi:\mathcal{HGL}_2(\CC)\to \mathcal{HGL}_2(\CC)$  that preserves adjacency in both directions, is either of the form $\Phi(A)=\lambda PA^{\sigma}P^{\ast}$ or $\Phi(A)=\lambda P(A^{\sigma})^{-1}P^{\ast}$  for some nonzero real number $\lambda$, a complex invertible matrix~$P$, and a map $\sigma$, which is either the identity map or the complex conjugation. If real numbers are replaced by a finite field, then we obtain a finite analog of Minkowski space-time. For prime fields, bijective maps that preserve relation~(\ref{light}) on finite Minkowski space-time were characterized in~\cite{blasi}. This result is generalized in author's subsequent paper~\cite{obrnljive2} by applying
the characterization of adjacency preservers on hermitian matrices over a finite field. An analogous result for the complement of the light cone in a finite Minkowski space-time is obtained by using the characterization of adjacency preservers on $\hgl$. All these theorems and their connection with special theory of relativity are described in more details in subsequent paper~\cite{obrnljive2}.

We now state the main theorem of this paper in terminology of graph theory.
\begin{thm}\label{glavni}
Let $q\geq 4$ and $n\geq 2$. Then the graph $\hgl$ is a core.
\end{thm}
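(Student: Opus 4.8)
\medskip
\noindent\emph{Proof strategy.} The plan is to reduce everything to a single claim: \emph{every graph endomorphism $\Phi$ of $\Gamma:=\hgl$ is injective}. This suffices, because an injective self-map of a finite set is a bijection, and a vertex-bijective endomorphism of a finite graph maps the edge set injectively, hence bijectively, into itself, so it also reflects non-edges and is a graph automorphism; that is exactly the statement that $\Gamma$ is a core. I also record at the outset that $\Gamma$ is vertex-transitive: over a finite field all nondegenerate hermitian forms of a given dimension are equivalent, so $GL_n(\FFq2)$ acts transitively on $\hgl$ by $A\mapsto PAP^{\ast}$, and this action preserves adjacency; in particular $\chi_f(\Gamma)=|V(\Gamma)|/\alpha(\Gamma)$.

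\smallskip
The first part of the work is local and geometric. Since the difference of two non-proportional rank-one hermitian matrices has rank two, any three or more pairwise adjacent matrices lie on a unique affine line $\ell=\{A+\lambda xx^{\ast}:\lambda\in\FF_q\}$ in the space of hermitian matrices; and $\lambda\mapsto\det(A+\lambda xx^{\ast})=\det(A)\,(1+\lambda\, x^{\ast}A^{-1}x)$ is affine-linear, with $x^{\ast}A^{-1}x\in\FF_q$. Hence a maximal clique of $\Gamma$ is either a full \emph{nonsingular line} (when $x^{\ast}A^{-1}x=0$), of size $q$, or a \emph{singular line} with its unique point on the hermitian determinantal hypersurface removed, of size $q-1$; so $\omega(\Gamma)=q$, and \emph{because $q\ge 4$} even the smaller maximal cliques, having size $q-1\ge 3$, still determine their line. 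A homomorphism from a complete graph to a loopless graph is injective, so $\Phi$ is injective on every clique; consequently $\Phi$ carries each maximum clique bijectively onto a nonsingular line, and each $(q-1)$-clique onto a $(q-1)$-clique that again lies on a unique line. Thus $\Phi$ induces a map $\phi$ on lines. As two distinct lines meet in at most one point, $\Phi$ respects the incidence between an invertible matrix and the lines through it; the geometry of that ``star'' is governed by $PG(n-1,\FFq2)$ and the hermitian form attached to $A^{-1}$, and a fundamental-theorem-of-projective-geometry type rigidity pins down the action of $\Phi$ on each star up to a semilinear datum. This part is essentially rank computations and classical projective geometry.

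\smallskip
The heart of the argument is promoting this to global injectivity. Each fibre $\Phi^{-1}(C)$ is an independent set of $\Gamma$, since two adjacent vertices in one fibre would produce a loop in the image. Suppose $\Phi$ is not injective. Iterating $\Phi$ yields a retraction of $\Gamma$ onto a proper induced subgraph $\Delta$ which is itself a core, with $\omega(\Delta)=\omega(\Gamma)=q$ and $\chi_f(\Delta)=\chi_f(\Gamma)$. I would then play two estimates against each other: a fractional-chromatic computation for $\Gamma$, using $\chi_f(\Gamma)=|V(\Gamma)|/\alpha(\Gamma)$ together with a \emph{spectral} upper bound on $\alpha(\Gamma)$ — obtained by eigenvalue interlacing against the distance-regular hermitian forms graph on all of $\mathcal H_n(\FFq2)$, whose spectrum is known — and, on the other side, the clique structure of $\Delta$ together with $|V(\Delta)|<|V(\Gamma)|$. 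For $q\ge 4$ these turn out to be incompatible with a proper retract $\Delta$ realizing $\chi_f(\Delta)=\chi_f(\Gamma)$, giving a contradiction; so $\Phi$ is injective, and $\Gamma$ is a core. The threshold $q\ge 4$ is genuine: at $q\in\{2,3\}$ the relevant inequalities degenerate, and $q=2$ (which contains the Petersen graph) is handled separately.

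\smallskip
The main obstacle is precisely this last step — converting ``not injective'' into a numerical impossibility. Steps one and two are bookkeeping in matrix theory and projective geometry, whereas the global step requires assembling the correct spectral input for $\Gamma$, which (unlike $\mathcal H_n(\FFq2)$) is \emph{not} distance-transitive, its maximal cliques having the two sizes $q$ and $q-1$, so one cannot use an association scheme on $\Gamma$ itself and must interlace against the ambient hermitian forms graph; one must then combine this with a sharp enough chromatic estimate and verify the resulting inequality uniformly for all $n\ge 2$ once $q\ge 4$.
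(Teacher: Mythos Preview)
Your local analysis is sound and matches the paper: maximal cliques are punctured affine lines of size $q$ or $q-1$, an endomorphism is injective on cliques, and for $q\ge 4$ even a $(q-1)$-clique determines its line. The paper uses exactly this clique dichotomy throughout.

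The genuine gap is the global step. A spectral/fractional-chromatic argument of the kind you sketch can establish at most that $\chi(\Gamma')>q$, i.e., that the core $\Gamma'$ is not a complete graph; the paper does prove exactly this (its Lemma on the chromatic number of the core, via interlacing against ${\cal H}_2(\FFq2)$ and Haemers' inequality). But ``core not complete'' is only the easy half. Your assertion that ``$\chi_f(\Delta)=\chi_f(\Gamma)$ together with $|V(\Delta)|<|V(\Gamma)|$ and the clique structure of $\Delta$'' yields a contradiction is not an argument: equality of fractional chromatic numbers is perfectly compatible with $|V(\Delta)|/\alpha(\Delta)=|V(\Gamma)|/\alpha(\Gamma)$ for a proper vertex-transitive retract $\Delta$, and you give no mechanism that forces $\alpha(\Delta)$ to be ``too large'' or ``too small''. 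Nothing in your outline excludes an intermediate core strictly between a $q$-clique and $\hgl$.

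The paper closes this gap by a structural case analysis on the core $\Gamma'$, not by a counting inequality. If two $q$-cliques of $\Gamma'$ share a vertex $A$, the retraction induces a self-map $\widehat f$ on the hermitian variety $V_{\overline{A^{-1}}}$; the paper shows $\widehat f$ is bijective by combining (a)~a transitivity lemma placing any pair of $q$-cliques through $A$ in a single orbit according to whether they are $A$-orthogonal, (b)~a key propagation lemma (if two cliques $l_{{\bf x}_1}$, $l_{{\bf x}_2}$ with ${\bf x}_1^{\ast}{\bf x}_2\neq 0$ collapse under $\Phi$, then so does every $l_{a_1{\bf x}_1+a_2{\bf x}_2}$), and (c)~for $n\ge 4$, the theorem of Cameron--Kazanidis that the complement of the point graph $H(n-1,q^2)$ is a core, which in turn rests on the nonexistence of ovoids/spreads in hermitian polar spaces. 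Once all $q$-cliques through every vertex of $\Gamma'$ are present, connectedness of each determinant fibre $\det^{-1}(\lambda)$ forces $\Gamma'=\hgl$. The remaining cases (no two $q$-cliques share a vertex in $\Gamma'$) are dispatched by explicitly identifying the would-be core and exhibiting a $q$-colouring of it, contradicting $\chi(\Gamma')>q$.

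In short: your spectral idea recovers only the ``not complete'' lemma; the substantive content --- the polar-space input for $n\ge 4$, the clique-propagation lemma, and the connectedness of determinant fibres --- is missing from your plan and cannot be replaced by a fractional-chromatic inequality.
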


It turns out that the proof of Theorem~\ref{glavni} is much more involved for $n\geq 4$ than it is in the case $n\in\{2,3\}$. The reason for this is the existence of multidimensional totally isotropic subspaces in $\FFq2^n$ if $n\geq 4$ (see Theorem~\ref{bose_podprostor}). To handle this situation we rely on geometric results on nonexistence of ovoids/spreads in certain hermitian polar spaces and combine them with results from~\cite{cameron_kan, bose}. Another significant step in the proof of Theorem~\ref{glavni} is to show that the core of $\hgl$ is not complete. Here we use several results from spectral and chromatic graph theory. All these lemmas are described in Section~2, which contains also the notation and all definitions. Proof of Theorem~\ref{glavni} is given in Section~3.

\section{Notation and auxiliary theorems}
Here we describe results from various mathematical areas that are used in the proofs. We split them in four subsections.

\subsection*{Finite fields}

Let $\inv{\phantom{a}}$ be an \emph{involution} on a finite field, that is, $\inv{x+y}=\inv{x}+\inv{y}$, $\inv{xy}=\inv{y}\cdot \inv{x}$, and $\inv{\inv{x}}=x$. In this paper we assume that the involution is not the identity map, which implies that the finite field has $q^2$ elements, where $q$ is a power of a prime. We denote the finite field by $\FFq2$ and recall that the unique involution is given by the rule $\inv{x}=x^q$ (cf.~\cite{FFA_Orel}). We use Greek letters for elements of the \emph{fixed field} of the involution $\FF:=\{\lambda\in \FFq2\, :\, \inv{\lambda}=\lambda\}$, which has $q$ elements. The \emph{trace} map $\Tr : \FFq2\to \FF$, given by $\Tr(x)=x+\inv{x}$, is linear if $\FFq2$ and $\FF$ are viewed as vector spaces over $\FF$~\cite[Theorem~2.23]{finite-fields-LN}. Hence the cardinality of the preimage of any $\lambda\in\FF$ is given by
\begin{equation}\label{i1}
|\Tr^{-1}(\lambda)|=q.
\end{equation}
Moreover, \cite[Theorem~2.25]{finite-fields-LN} implies that
\begin{equation}\label{i2}
\Tr(x)=0\qquad \Longleftrightarrow \qquad x=\inv{y}-y\ \emph{for some}\ y\in\FFq2.
\end{equation}
The \emph{norm} map $\N : \FFq2\to \FF$, given by $\N(x)=x\inv{x}$ is surjective. Moreover, it satisfies
\begin{equation}\label{i3}
|\N^{-1}(\lambda)|=q+1
\end{equation}
for any nonzero $\lambda\in\FF$~\cite[Section~2]{bose}. In the proofs, the case $q=4$ needs additional arguments. We mention that the equation
\begin{equation}\label{i4}
\Tr(x)^2+\Tr(x)+\N(x)=0
\end{equation}
has 4 nonzero solutions $x_1, x_2, x_3, x_4$ in the field $\FF_{4^2}$. Moreover, the set
\begin{equation}\label{i44}
\Big\{x_j+\big(\Tr(x_j)+1\big)x_k\ :\ j,k\in\{1,2,3,4\}\Big\}
\end{equation}
consists of 11 nonzero scalars.
The proof of these two facts are left to the reader.

\subsection*{Hermitian matrices}

In this paper $n\geq 2$ is an integer. A $n\times n$ matrix $A$ with coefficients in $\FFq2$ is \emph{hermitian}, if $A^{\ast}:=\inv{A}^{\tr}=A$, where the involution $\inv{\phantom{a}}$ is applied entry-wise and $B^\top$ is the transpose of $B$. If $A$ is hermitian with $\rank A=r$, then
\begin{equation}\label{i8}
A=P\diag(\underbrace{1,\ldots,1}_r,0,\ldots,0)P^{\ast}
\end{equation}
for some invertible matrix $P$ and diagonal matrix $\diag(1,\ldots,1,0,\ldots,0)$~\cite[Theorem~4.1]{bose}. Consequently, $A=\sum_{i=1}^{r} {\bf x}_i{\bf x}_i^{\ast}$, where the column vector ${\bf x}_i\in\FFq2^n$ is the $i$-th column of matrix $P$. Two hermitian matrices $A$ and $B$ are \emph{adjacent} if $\rank(A-B)=1$. In that case, the unique maximal set of pairwise adjacent matrices, containing both $A$ and $B$, is given by
\begin{equation}\label{i5}
\{A+\lambda{\bf x}{\bf x}^{\ast}\ :\ \lambda\in\FF\},
\end{equation}
where ${\bf x}{\bf x}^{\ast}=B-A$~\cite[Corollary~6.9]{wan}. In this paper we use $\hgl$ to denote the set of all $n\times n$ invertible hermitian matrices. A map $\Phi : \hgl\to\hgl$ \emph{preserves adjacency} if $\rank\big(\Phi(A)-\Phi(B)\big)=1$ whenever $\rank(A-B)=1$.

The multiplicativity of the determinant and~(\ref{i8}) imply that $\det A\in\FF$ for any hermitian matrix $A$.
The next lemma evaluate the determinant of an invertible matrix perturbed by a rank-one matrix.
\begin{lemma}{\cite[Chapter~14]{handbook}}\label{handbook2}
Let $\KK$ be a field, $A$ an invertible matrix with coefficients in $\KK$, and ${\bf x},{\bf y}\in \KK^{n}$. Then $$\det(A+{\bf x}{\bf y}^{\tr})=(\det A)\cdot (1+{\bf y}^{\tr}A^{-1}{\bf x}).$$
\end{lemma}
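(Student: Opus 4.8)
The plan is to give a short self-contained proof over an arbitrary field $\KK$, first reducing to a rank-one perturbation of the identity matrix, and then expanding a determinant by multilinearity in the columns. Since $A$ is invertible, write $A+{\bf x}{\bf y}^{\tr}=A\bigl(I+A^{-1}{\bf x}{\bf y}^{\tr}\bigr)$, so by multiplicativity of the determinant it suffices to prove
$$\det(I+{\bf u}{\bf y}^{\tr})=1+{\bf y}^{\tr}{\bf u},\qquad {\bf u}:=A^{-1}{\bf x}\in\KK^{n};$$
multiplying by $\det A$ and using ${\bf y}^{\tr}{\bf u}={\bf y}^{\tr}A^{-1}{\bf x}$ then yields the stated formula.

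To prove the reduced identity, write ${\bf u}=(u_1,\ldots,u_n)^{\tr}$ and ${\bf y}=(y_1,\ldots,y_n)^{\tr}$, and let ${\bf e}_1,\ldots,{\bf e}_n$ be the standard basis of $\KK^{n}$, so that the $j$-th column of $I+{\bf u}{\bf y}^{\tr}$ is ${\bf e}_j+y_j{\bf u}$. Expanding $\det(I+{\bf u}{\bf y}^{\tr})$ by multilinearity in its $n$ columns produces a sum of $2^{n}$ determinants, one for each choice, in every column, of the summand ${\bf e}_j$ or $y_j{\bf u}$. Any term that selects the vector ${\bf u}$ in two or more columns has a repeated column and therefore vanishes. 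The surviving contributions are: the term selecting ${\bf e}_j$ in every column, which equals $\det I=1$; and, for each $j\in\{1,\ldots,n\}$, the term selecting $y_j{\bf u}$ in column $j$ and ${\bf e}_k$ in every other column $k$, which equals $y_j\det({\bf e}_1,\ldots,{\bf e}_{j-1},{\bf u},{\bf e}_{j+1},\ldots,{\bf e}_n)$. In this last determinant, expanding along column $j$ annihilates every cofactor except the one at the $(j,j)$ entry — for $i\neq j$ the $i$-th column is ${\bf e}_i$, which becomes zero once row $i$ is deleted — and the remaining minor equals $1$, so the determinant equals $u_j$. Summing, $\det(I+{\bf u}{\bf y}^{\tr})=1+\sum_{j=1}^{n}y_ju_j=1+{\bf y}^{\tr}{\bf u}$, as required.

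I do not expect a real obstacle, as the argument uses only the defining properties of the determinant and the lemma is classical. The one point that warrants care is that $\KK$ is an \emph{arbitrary} field, which blocks the quick eigenvalue proof: although ${\bf u}{\bf y}^{\tr}$ acts by multiplication by ${\bf y}^{\tr}{\bf u}$ on the line $\KK{\bf u}$ and by $0$ on the hyperplane $\ker{\bf y}^{\tr}$, concluding from this that $I+{\bf u}{\bf y}^{\tr}$ has eigenvalues $1+{\bf y}^{\tr}{\bf u}$ and $1$ (the latter with multiplicity $n-1$) tacitly assumes diagonalizability, which fails precisely when ${\bf y}^{\tr}{\bf u}=0$. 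The multilinearity computation above sidesteps this and is valid over any field. An equally field-independent alternative is to evaluate $\det\bigl(\begin{smallmatrix}I&-{\bf u}\\ {\bf y}^{\tr}&1\end{smallmatrix}\bigr)$ by the two Schur-complement expansions: eliminating the top-left block $I$ gives the value $1+{\bf y}^{\tr}{\bf u}$, while eliminating the bottom-right block $1$ gives $\det(I+{\bf u}{\bf y}^{\tr})$, and equating the two completes the proof.
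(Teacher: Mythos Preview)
Your proof is correct and self-contained. Note, however, that the paper does not actually prove this lemma: it is stated with a citation to \cite[Chapter~14]{handbook} and used as a black box, so there is no ``paper's own proof'' to compare against. Your multilinearity argument (and the Schur-complement alternative you mention) are both standard, valid over any field, and entirely appropriate here.
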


\begin{cor}\label{handbook}
Let $A\in \hgl$, ${\bf x}\in\FF_{q^2}^{n}$, and $\lambda\in\FF$.
Then
\begin{enumerate}
\item $\det(A+\lambda{\bf x}{\bf x}^{\ast})=(\det A)\cdot (1+\lambda{\bf x}^{\ast}A^{-1}{\bf x}),$
\item $A+\lambda{\bf x}{\bf x}^{\ast}$ is invertible if and only if $\lambda{\bf x}^{\ast}A^{-1}{\bf x}\neq -1$,
\item if $A+\lambda{\bf x}{\bf x}^{\ast}$ is invertible, then $$(A+\lambda{\bf x}{\bf x}^{\ast})^{-1}=A^{-1}-\frac{\lambda}{1+\lambda{\bf x}^{\ast}A^{-1}{\bf x}}(A^{-1}{\bf x})(A^{-1}{\bf x})^{\ast}.$$
\end{enumerate}
\end{cor}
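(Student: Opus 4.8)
The plan is to deduce all three parts from Lemma~\ref{handbook2}. For part~(i) I would apply that lemma over $\KK=\FFq2$ with the row vector ${\bf y}^{\tr}:=\lambda{\bf x}^{\ast}$ (equivalently ${\bf y}:=\lambda\inv{\bf x}$), so that ${\bf x}{\bf y}^{\tr}=\lambda{\bf x}{\bf x}^{\ast}$. Lemma~\ref{handbook2} then gives $\det(A+\lambda{\bf x}{\bf x}^{\ast})=(\det A)(1+{\bf y}^{\tr}A^{-1}{\bf x})=(\det A)(1+\lambda{\bf x}^{\ast}A^{-1}{\bf x})$, which is exactly~(i). Part~(ii) is then immediate: since $A\in\hgl$ has $\det A\neq 0$, and a square matrix over a field is invertible if and only if its determinant is nonzero, $A+\lambda{\bf x}{\bf x}^{\ast}$ is invertible precisely when $1+\lambda{\bf x}^{\ast}A^{-1}{\bf x}\neq 0$, i.e.\ $\lambda{\bf x}^{\ast}A^{-1}{\bf x}\neq -1$.

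For part~(iii) I would verify the claimed formula by a direct multiplication. The key preliminary remark is that $A^{\ast}=A$ forces $(A^{-1})^{\ast}=A^{-1}$, hence $(A^{-1}{\bf x})^{\ast}={\bf x}^{\ast}A^{-1}$. Writing $\alpha:={\bf x}^{\ast}A^{-1}{\bf x}$ (which lies in $\FF$ since $\alpha^{\ast}=\alpha$) and $\mu:=\lambda/(1+\lambda\alpha)$ (well defined by~(ii)), I would multiply $A+\lambda{\bf x}{\bf x}^{\ast}$ on the right by $A^{-1}-\mu(A^{-1}{\bf x})(A^{-1}{\bf x})^{\ast}$. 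Using $A(A^{-1}{\bf x})={\bf x}$ and ${\bf x}^{\ast}(A^{-1}{\bf x})=\alpha$, every term is proportional to $I$ or to ${\bf x}{\bf x}^{\ast}A^{-1}$, and the product collapses to $I+\big(\lambda-\mu(1+\lambda\alpha)\big){\bf x}{\bf x}^{\ast}A^{-1}$. Since $\lambda-\mu(1+\lambda\alpha)=0$ by the choice of $\mu$, the product equals the identity; as a one-sided inverse of a square matrix is two-sided, this proves~(iii).

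There is no genuine obstacle here: the corollary merely records the matrix determinant lemma and the Sherman--Morrison formula in the hermitian setting. The only points that need a line of care are the conjugate-transpose bookkeeping (that $A^{-1}$ is again hermitian, so $(A^{-1}{\bf x})^{\ast}={\bf x}^{\ast}A^{-1}$) and the scalar cancellation $\lambda=\mu(1+\lambda\alpha)$ which kills the cross terms in~(iii). One could alternatively derive~(iii) from a Schur-complement/block-matrix identity, but the direct check is shortest.
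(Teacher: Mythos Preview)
Your proposal is correct and follows essentially the same route as the paper: the paper also sets ${\bf y}:=\lambda\inv{{\bf x}}$ and invokes Lemma~\ref{handbook2} for~(i), notes that~(ii) follows from~(i), and verifies~(iii) by the same direct multiplication $(A+\lambda{\bf x}{\bf x}^{\ast})\bigl(A^{-1}-\frac{\lambda}{1+\lambda{\bf x}^{\ast}A^{-1}{\bf x}}(A^{-1}{\bf x})(A^{-1}{\bf x})^{\ast}\bigr)=I$. Your write-up simply supplies more of the bookkeeping (the hermiticity of $A^{-1}$ and the explicit cancellation) that the paper leaves implicit.
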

\begin{proof}
If we choose ${\bf y}:=\lambda\inv{{\bf x}}$, then  (i) follows from Lemma~\ref{handbook2}, while (ii) follows from (i). A straightforward calculation shows that $$(A+\lambda{\bf x}{\bf x}^{\ast})\left(A^{-1}-\frac{\lambda}{1+\lambda{\bf x}^{\ast}A^{-1}{\bf x}}(A^{-1}{\bf x})(A^{-1}{\bf x})^{\ast}\right)=I,$$
where $I$ is the identity matrix. This proves (iii).
\end{proof}

\subsection*{Graph theory}

All graphs in this paper are finite, undirected, and without loops and multiple edges.
We use $V(\Gamma)$ and $E(\Gamma)$ to denote the vertex set and the edge set of graph $\Gamma$ respectively. A graph $\Gamma'$ is a \emph{subgraph} of $\Gamma$ if $V(\Gamma')\subseteq V(\Gamma)$ and $E(\Gamma')\subseteq E(\Gamma)$. It is an \emph{induced subgraph} if $E(\Gamma')=\{\{u,v\}\in E(\Gamma)\ :\ u,v\in V(\Gamma')\}$.
A \emph{graph homomorphism} between two graphs $\Gamma$ and
$\Gamma'$ is a map $\Phi: V(\Gamma)\to V(\Gamma')$ such that the following implication holds for any
$v,u\in V(\Gamma)$:
\begin{equation}\label{i6}
\{v,u\}\in E(\Gamma) \Longrightarrow \{\Phi(v),\Phi(u)\}\in E(\Gamma').
\end{equation}
In particular, $\Phi(v)\neq \Phi(u)$ for any edge $\{v,u\}$. If $\Phi$ is bijective, the
converse of~(\ref{i6}) also holds, since the graphs are finite. In that case, $\Phi$ is a \emph{graph isomorphism} between $\Gamma$ and $\Gamma'$. If $\Gamma=\Gamma'$, a homomorphism is an \emph{endomorphism}  and an isomorphism is an \emph{automorphism}. A graph $\Gamma$ is a \emph{core} if all its endomorphisms are automorphisms. If $\Gamma$ is a graph, then its subgraph $\Gamma'$ is
a \emph{core of $\Gamma$} if it is a core and there exists
some homomorphism $\Phi : \Gamma\to \Gamma'$. Any
graph has a core, which is always an induced subgraph and unique
up to isomorphism~\cite[Lemma~6.2.2]{godsil_knjiga}. If $\Gamma'$ is a core of $\Gamma$, then there exists a \emph{retraction} $\Psi : \Gamma\to \Gamma'$, that is, a homomorphism, which fixes the subgraph~$\Gamma'$. In fact, if $\Phi : \Gamma\to \Gamma'$ is any homomorphism, then the restriction $\Phi|_{\Gamma'}$ is an automorphism, since $\Gamma'$ is a core. Hence, we can choose $\Psi:=(\Phi|_{\Gamma'})^{-1}\circ\Phi$.

A graph $\Gamma$ is \emph{vertex transitive} if for any pair of vertices $v,u\in V(\Gamma)$ there exists an automorphism $\Phi$ of $\Gamma$ such that $\Phi(v)=u$. A core of a vertex transitive graph is vertex transitive~\cite[Theorem~6.13.1]{godsil_knjiga}.

Given a
graph $\Gamma$, we use $\chi(\Gamma)$ to denote its \emph{chromatic
number}, that is, the smallest integer~$m$ for which there exists a
\emph{vertex $m$-coloring}, i.e., a map ${\cal C}:
V(\Gamma)\to\{1,2,\ldots,m\}$  that satisfies ${\cal C}(v)\neq
{\cal C}(u)$ whenever $\{v,u\}$ is an edge.
The next simple lemma is
well known (see e.g.~\cite[Proposition~1.20]{klavzar}
or~\cite[Corollary~1.8]{homomorfizmi}). It just observes that a
composition of a graph homomorphism and a vertex coloring is still a
vertex coloring.
\begin{lemma}\label{lema-kromaticno}
If there is some graph homomorphism $\Phi: \Gamma\to\Gamma'$, then $\chi(\Gamma)\leq\chi(\Gamma')$.
\end{lemma}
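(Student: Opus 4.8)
The plan is the standard observation that the composition of a graph homomorphism with a proper vertex coloring is again a proper vertex coloring. First I would set $m:=\chi(\Gamma')$ and fix a proper vertex $m$-coloring ${\cal C}':V(\Gamma')\to\{1,2,\ldots,m\}$, which exists by the very definition of the chromatic number of $\Gamma'$. Then I would form the composite ${\cal C}:={\cal C}'\circ\Phi:V(\Gamma)\to\{1,2,\ldots,m\}$ and check that it is a proper coloring of $\Gamma$: for an arbitrary edge $\{v,u\}\in E(\Gamma)$, the homomorphism property~(\ref{i6}) gives $\{\Phi(v),\Phi(u)\}\in E(\Gamma')$, and in particular $\Phi(v)\neq\Phi(u)$; since ${\cal C}'$ is proper, this forces ${\cal C}'(\Phi(v))\neq{\cal C}'(\Phi(u))$, that is, ${\cal C}(v)\neq{\cal C}(u)$.

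Having exhibited a proper $m$-coloring of $\Gamma$, the minimality built into the definition of the chromatic number yields $\chi(\Gamma)\leq m=\chi(\Gamma')$, which is the assertion. (An equivalent way to phrase the same argument: a proper $m$-coloring of $\Gamma'$ is nothing but a graph homomorphism $\Gamma'\to K_m$ into the complete graph on $m$ vertices, and composing it with $\Phi$ produces a homomorphism $\Gamma\to K_m$, i.e.\ a proper $m$-coloring of $\Gamma$.) I do not anticipate any genuine obstacle in this proof; the only point meriting attention is the defining feature of a graph homomorphism recorded in~(\ref{i6}), namely that an edge of $\Gamma$ is carried to an edge — hence to a pair of distinct vertices — of $\Gamma'$, and everything else follows at once from unwinding the definitions. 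I note that, in contrast to the converse direction of~(\ref{i6}), this particular argument does not rely on the finiteness of the graphs involved.
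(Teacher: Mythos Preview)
Your argument is correct and is exactly the approach the paper indicates: the paper does not give a formal proof but merely remarks that ``a composition of a graph homomorphism and a vertex coloring is still a vertex coloring,'' citing \cite{klavzar,homomorfizmi}. Your write-up simply spells out this one-line observation in detail.
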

The \emph{adjacency matrix} of a graph $\Gamma$ with vertex set $V(\Gamma)=\{u_1, u_2,\ldots, u_t\}$ is an $n\times n$ binary matrix, whose $(i,j)$-th entry equals 1 if $\{u_i,u_j\}$ is an edge and 0 otherwise. Being a real symmetric matrix, all its eigenvalues are real. We denote them in descending order by $\lambda_1\geq \lambda_2\geq\ldots\geq \lambda_t$. Since the adjacency matrix has zero diagonal, it follows that
\begin{equation}\label{i10}
\lambda_1+\lambda_2+\ldots+\lambda_t=0.
\end{equation}
When $\Gamma$ is \emph{regular}, that is, each vertex has the same number $k$ of neighbors, then~\cite[Proposition~3.1]{biggs} implies that
\begin{equation}\label{i11}
\lambda_1=k.
\end{equation}
Another property of graph eigenvalues is given by Theorem~\ref{haemers}~\cite[Proposition~3.6.3]{spekter}.
\begin{thm}\label{haemers}
If $t>\chi(\Gamma)$, then $\lambda_2+\lambda_3+\ldots+ \lambda_{\chi(\Gamma)}+\lambda_{t-\chi(\Gamma)+1}\geq 0$.
\end{thm}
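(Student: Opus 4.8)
The plan is to derive the inequality from the eigenvalue interlacing theorem applied to the quotient matrix of an optimal coloring partition. Write $m:=\chi(\Gamma)$ and fix a proper coloring of $\Gamma$ with $m$ colors; let $C_1,\dots,C_m$ be its color classes. Each $C_i$ is an independent set, and because $m$ is minimal, between any two distinct classes there is at least one edge (otherwise $C_i\cup C_j$ would be independent and $m-1$ colors would suffice). Order the vertex set so that $C_1,\dots,C_m$ occupy consecutive blocks of indices, and partition the adjacency matrix $A$ of $\Gamma$ into the corresponding $m\times m$ array of blocks $A_{ij}$; since each class is independent, every diagonal block $A_{ii}$ is zero. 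Let $S$ be the $t\times m$ matrix whose $i$-th column is the indicator vector of $C_i$ divided by $\sqrt{|C_i|}$, so that $S^{\tr}S=I_m$, and set $B:=S^{\tr}AS$. Then $B$ is a symmetric $m\times m$ matrix whose $(i,j)$-entry is the number of edges between $C_i$ and $C_j$ divided by $\sqrt{|C_i|\,|C_j|}$; in particular it has zero diagonal and, by the optimality remark, strictly positive off-diagonal entries, hence is also nonnegative and irreducible.

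Two features of $B$ drive the proof. First, $\mathrm{tr}\,B=0$, so its eigenvalues $\mu_1\ge\mu_2\ge\cdots\ge\mu_m$ satisfy $\mu_1+\cdots+\mu_m=0$; since $B$ is an irreducible symmetric nonnegative matrix with vanishing diagonal, Perron--Frobenius gives $\mu_1>0$ and $|\mu_m|\le\mu_1$, with equality excluded once $m\ge 3$ (the underlying graph of $B$ being $K_m$, which is not bipartite). Second, the interlacing theorem applied to $B=S^{\tr}AS$ with $S^{\tr}S=I_m$ yields
\[
\lambda_i\ \ge\ \mu_i\ \ge\ \lambda_{t-m+i}\qquad(i=1,\dots,m).
\]
From these one reads off $\lambda_2+\cdots+\lambda_m\ge\mu_2+\cdots+\mu_m=-\mu_1$ and $\lambda_{t-m+1}+\cdots+\lambda_t\le\mu_1+\cdots+\mu_m=0$; together with the zero-trace identity $(\ref{i10})$, $\lambda_1+\cdots+\lambda_t=0$, these allow one to pass between the top and the bottom of the spectrum. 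The hypothesis $t>m$ guarantees $t-m+1\ge 2$, so that $\lambda_{t-m+1}$ is a genuinely interior eigenvalue and the left side of the asserted inequality is meaningful.

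Assembling these facts into the bound $\lambda_2+\cdots+\lambda_m+\lambda_{t-m+1}\ge 0$ is the heart of the matter and is where I expect the real work to lie: one must select exactly the right interlacing inequalities and index ranges so that the nonnegative surpluses $\lambda_i-\mu_i$ and $\mu_i-\lambda_{t-m+i}$ absorb the single wrong-sign term coming from $\mu_1\ge\lambda_{t-m+1}$, and this is precisely where the Perron--Frobenius estimate $\mu_m\ge-\mu_1$ (strict for $m\ge 3$) is needed to keep the smallest quotient eigenvalue under control. The bookkeeping naturally splits into the regime $t\le 2m-1$, when $\lambda_{t-m+1}$ lies inside the block $\lambda_2,\dots,\lambda_m$, and the regime $t\ge 2m$, when it lies strictly below it. The two small cases can be settled by hand: $m=1$ forces $A=0$ and all $\lambda_i=0$, while $m=2$ forces $\Gamma$ bipartite, so that $\lambda_i=-\lambda_{t+1-i}$ and $\lambda_2+\lambda_{t-1}=0$.
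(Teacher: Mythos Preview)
The paper does not prove this theorem; it is quoted from Brouwer--Haemers, \emph{Spectra of Graphs}, Proposition~3.6.3, and used as a black box in the proof of Lemma~\ref{klika-kromaticno}. So there is no proof in the paper against which to compare.

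Your proposal is not a proof either, and you concede as much: the final paragraph says that ``assembling these facts into the bound\ldots is the heart of the matter and is where I expect the real work to lie,'' and then offers only a description of the obstacle, not its resolution. The setup is standard and correct --- the normalized quotient matrix $B=S^{\tr}AS$ of an optimal $m$-coloring has zero diagonal, hence zero trace, and its eigenvalues $\mu_1\ge\cdots\ge\mu_m$ interlace those of $A$ --- and this is indeed the framework underlying Hoffman-type chromatic bounds. The cases $m\le 2$ are handled cleanly. But for $m\ge 3$ the ingredients you list do not assemble into the claimed inequality. From $\lambda_i\ge\mu_i$ one gets $\sum_{i=2}^m\lambda_i\ge-\mu_1$, so it would suffice to have $\lambda_{t-m+1}\ge\mu_1$; interlacing gives precisely the reverse, $\mu_1\ge\lambda_{t-m+1}$. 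The Perron--Frobenius estimate $\mu_m\ge-\mu_1$ does not repair this on its own, and the case split at $t=2m-1$ is irrelevant to the difficulty. What you have written is a plan that stops exactly where the argument becomes nontrivial.
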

If $\Gamma'$ is an induced subgraph of $\Gamma$ on $t-1$ vertices, say $u_1, u_2,\ldots, u_{t-1}$, then it is well known (cf.~\cite[Proposition~3.2.1]{spekter} or~\cite[Section~6.5]{handbook2}) that its eigenvalues $\mu_1,\ldots,\mu_{t-1}$ \emph{interlace} those of $\Gamma$, that is,
\begin{equation}\label{i7}
\lambda_1\geq \mu_1 \geq\lambda_2\geq \mu_2\geq \ldots\geq \lambda_{t-1}\geq \mu_{t-1} \geq\lambda_t.
\end{equation}

The graph considered in this paper has $\hgl$ as its vertex set  and $\{\{A,B\}\ :\ A,B\in \hgl, \rank(A-B)=1\}$ as its edge set. We slightly abuse the notation and denote the graph by $\hgl$ as well. Any two adjacent vertices in $\hgl$ lies in a unique \emph{maximal clique}, i.e. a maximal set of pairwise adjacent vertices (cf.~(\ref{i5})). If $q\geq 3$, a maximal clique has either $q$ or $q-1$ vertices (cf.~Figure~1). In fact, Corollary~\ref{handbook} implies that all matrices in~(\ref{i5}) are invertible if ${\bf x}^{\ast}A^{-1}{\bf x}=0$, while only one is singular if ${\bf x}^{\ast}A^{-1}{\bf x}\neq 0$.
In this paper, a maximal clique with $q-1$~vertices  is called  \emph{$(q-1)$-clique}.  A \emph{$q$-clique} is defined analogously. By Corollary~\ref{handbook}, all matrices in a $q$-clique have the same determinant, while the determinants of matrices in a $(q-1)$-clique are all distinct. A map $\Phi: \hgl\to\hgl$ preserves adjacency precisely when it is an endomorphism of graph $\hgl$.
\begin{figure}
\centering
\includegraphics[width=0.5\textwidth]{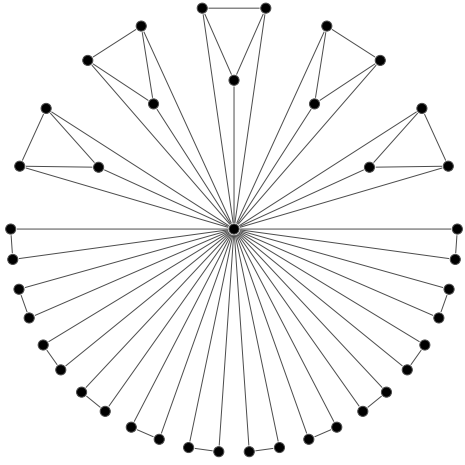}
\caption{ Neighborhood of any vertex in ${\cal HGL}_2(\FF_{4^2})$. Matrices in a $4$-clique have the same determinant, determinants of matrices in a $3$-clique are all distinct.}
\end{figure}
It follows from~(\ref{i8}) that for any pair of matrices $A,B\in \hgl$ there is an invertible matrix $P$ such that $PAP^{\ast}=B$. So graph $\hgl$ and its core are vertex transitive, since the map $X\mapsto PXP^{\ast}$ is an automorphism of $\hgl$.

In the proofs section, a lower bound for the chromatic number of $\hgl$ is provided. The bound is computed with a help of spectral information on the graph ${\cal H}_2(\FFq2)$ that is formed analogously as $\hgl$, except that the vertex set consists of all $2\times 2$ hermitian matrices (singular matrices are included). It follows from~\cite[Theorem~9.5.7 and the discussion on pp. 128--129]{brouwer_cohen_neumaier} (see~\cite{FFA_Orel} as well) that ${\cal H}_2(\FFq2)$ is a distance-regular graph of diameter two and its distinct eigenvalues are precisely the eigenvalues of the matrix
$$\begin{bmatrix}
a_0&b_0&0\\
c_1&a_1&b_1\\
0&c_2&a_2
\end{bmatrix},$$
where
\begin{equation*}
c_i=\frac{q^{i-1}(q^i-(-1)^i)}{q+1},\quad a_i=\frac{q^{2i}-q^{i-1}(q^i-(-1)^i)-1}{q+1},\quad b_i=\frac{q^{4}-q^{2i}}{q+1}.
\end{equation*}
These eigenvalues are $k:=q^3-q^2+q-1$, $r:=q-1$, and $s:=-q^2+q-1$. Their multiplicities, in the same order, equal  $1$, $\frac{(s+1)k(k-s)}{c_2(s-r)}=q^4-k-1$, and $q^4-1-\frac{(s+1)k(k-s)}{c_2(s-r)}=k$~\cite[Theorem~1.3.1~(vi)]{brouwer_cohen_neumaier}.

\subsection*{Projective geometry}

A \emph{hermitian variety} of $n\times n$ hermitian matrix $A$ over $\FFq2$ is defined by
$$
V_A=\{\langle{\bf x}\rangle\ :\ {\bf x}^{\tr}A\inv{{\bf x}}=0, 0\neq {\bf x}\in \FFq2^n \},
$$
where $\langle{\bf x}\rangle$ is the 1-dimensional subspace in $\FFq2^n$ generated by vector~${\bf x}$~\cite{bose}. In this paper we prefer the term ${\bf x}^{\ast}A{\bf x}$ instead of ${\bf x}^{\tr}A\inv{{\bf x}}$. If ${\bf x}\neq 0$, then
\begin{equation}\label{i12}
{\bf x}^{\ast}A{\bf x}=0 \Longleftrightarrow \langle{\bf x}\rangle\in V_{\inv{A}}.
\end{equation}
The next theorem is proved in~\cite[Theorem 8.1 and its corollary]{bose}.
\begin{thm}\label{kardinalnost}
If $\rank A=r$, then the cardinality of the variety equals
$$|V_A|=\frac{q^{2n-1}+(-1)^r(q-1)q^{2n-r-1}-1}{q^2-1}.$$
In particular, $V_A$ is nonempty.
\end{thm}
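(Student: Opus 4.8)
The plan is to reduce $A$ to diagonal normal form and then carry out an elementary point count over $\FFq2$. By~(\ref{i8}) write $A=PDP^{\ast}$ with $P$ invertible and $D=\diag(\underbrace{1,\ldots,1}_r,0,\ldots,0)$. The map $\langle{\bf x}\rangle\mapsto\langle P^{\tr}{\bf x}\rangle$ is a bijection of the set of $1$-dimensional subspaces of $\FFq2^n$, and since ${\bf x}^{\tr}A\inv{{\bf x}}={\bf z}^{\tr}D\inv{{\bf z}}$ for ${\bf z}:=P^{\tr}{\bf x}$, this bijection carries $V_A$ onto $V_D$. Hence $|V_A|=|V_D|$, and $V_D$ consists exactly of the subspaces $\langle{\bf z}\rangle$ for which $\sum_{i=1}^r \N(z_i)=0$. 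Because $\N(\lambda z)=\N(\lambda)\N(z)$ and $\N(\lambda)\in\FF\setminus\{0\}$ for $\lambda\neq 0$, this vanishing condition is invariant under rescaling of ${\bf z}$, so its nonzero solutions in $\FFq2^n$ form a disjoint union of sets $\langle{\bf z}\rangle\setminus\{0\}$, each of size $q^2-1$; therefore $|V_A|=(Z-1)/(q^2-1)$, where $Z$ is the number of ${\bf z}\in\FFq2^n$ with $\sum_{i=1}^r\N(z_i)=0$ and the $-1$ removes ${\bf z}=0$.

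Since the coordinates $z_{r+1},\ldots,z_n$ are unconstrained, $Z=N_0\,q^{2(n-r)}$, where $N_0$ is the number of $r$-tuples $(z_1,\ldots,z_r)\in\FFq2^r$ with $\N(z_1)+\cdots+\N(z_r)=0$. The core computation is to evaluate $N_0$. My preferred route is a short character-sum argument: for $s\in\FF$, $\tfrac1q\sum_{\psi}\psi(s)$ equals $1$ if $s=0$ and $0$ otherwise, the sum running over the additive characters $\psi$ of $\FF$, whence $N_0=\tfrac1q\sum_{\psi}\bigl(\sum_{z\in\FFq2}\psi(\N(z))\bigr)^{r}$. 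For the trivial character the inner sum is $q^2$; for a nontrivial $\psi$, combining $|\N^{-1}(0)|=1$, $|\N^{-1}(\mu)|=q+1$ for $\mu\neq 0$ (see~(\ref{i3})) and $\sum_{\mu\in\FF}\psi(\mu)=0$ yields inner sum $1+(q+1)(-1)=-q$. Hence $N_0=\tfrac1q\bigl(q^{2r}+(q-1)(-q)^r\bigr)=q^{2r-1}+(-1)^r(q-1)q^{r-1}$. (To avoid characters one can instead note that, as functions of $r$, the number of $r$-tuples with norm-sum $0$ and the number with norm-sum a fixed nonzero element satisfy a $2\times 2$ linear recursion obtained by conditioning on $\N(z_r)$ and applying~(\ref{i3}); its characteristic roots are $q^2$ and $-q$, and the base cases $r=0,1$ fix the constants, giving the same closed form.)

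Substituting $Z=\bigl(q^{2r-1}+(-1)^r(q-1)q^{r-1}\bigr)q^{2(n-r)}$ into $|V_A|=(Z-1)/(q^2-1)$ gives exactly $\bigl(q^{2n-1}+(-1)^r(q-1)q^{2n-r-1}-1\bigr)/(q^2-1)$, as claimed. The final assertion is then immediate, though it can also be seen directly: the vector having a single $1$ in coordinate $n$ is isotropic when $r<n$, and when $r=n\geq 2$ one uses surjectivity of $\N$ to choose $z\in\FFq2$ with $\N(z)=-1$ and takes the vector with $z$ in position $1$ and $1$ in position $2$. The one point demanding care is the evaluation of $N_0$: the asymmetry between the single-point fiber $\N^{-1}(0)$ and the $(q+1)$-point fibers $\N^{-1}(\mu)$ with $\mu\neq 0$ is precisely what produces the factor $q-1$ and the sign $(-1)^r$ in the statement, so it must be tracked cleanly.
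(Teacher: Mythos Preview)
Your proof is correct. The paper itself does not prove this statement; it simply imports it from Bose--Chakravarti \cite[Theorem~8.1 and its corollary]{bose}, so there is no ``paper's proof'' to compare against beyond that citation. Your argument---reduce to the canonical diagonal form via~(\ref{i8}), then count affine solutions of $\sum_{i=1}^r \N(z_i)=0$ with additive characters of $\FF$ (exploiting the fiber sizes from~(\ref{i3}))---is a clean self-contained derivation; the recursion alternative you sketch is essentially the classical approach in \cite{bose}, and the character-sum route packages the same information more compactly.
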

Given a  vector subspace ${\cal U}\subseteq \FFq2^n$, we write ${\cal U}\subseteq V_A$ if $\langle {\bf x}\rangle\in V_A$ for all nonzero ${\bf x}\in {\cal U}$. In this case it is easy to see that
\begin{equation}\label{opomba}
{\bf x}^{\ast}\inv{A}{\bf y}=0
\end{equation}
for all  ${\bf x}, {\bf y} \in {\cal U}$ (cf.~\cite[Lemma~9.1]{bose}). We use $\dim {\cal U}$ to denote the vector space dimension of ${\cal U}$ (one more than the projective dimension).
\begin{thm}{\cite[Theorem~9.1]{bose}}\label{bose_podprostor}
Let $A\in \hgl$ and suppose that  ${\cal U}\subseteq \FFq2^n$ is a vector subspace.
If ${\cal U}\subseteq V_A$, then $\dim {\cal U}\leq \big\lfloor\frac{n}{2}\big\rfloor$.
\end{thm}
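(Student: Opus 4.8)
The statement is the familiar fact that any totally isotropic subspace of a nondegenerate hermitian form on $\FFq2^n$ has dimension at most $\big\lfloor\frac{n}{2}\big\rfloor$, and the plan is to prove it directly via orthogonal complements. First I would package the form: since $A\in\hgl$, the entrywise conjugate $\inv{A}$ is again an invertible hermitian matrix, so $h({\bf x},{\bf y}):={\bf x}^{\ast}\inv{A}{\bf y}$ is a nondegenerate hermitian form on $\FFq2^n$, linear in ${\bf y}$ and satisfying $h({\bf y},{\bf x})=\overline{h({\bf x},{\bf y})}$. By~(\ref{opomba}) the hypothesis ${\cal U}\subseteq V_A$ is exactly the statement that ${\cal U}$ is \emph{totally isotropic} for $h$, i.e.\ $h({\bf x},{\bf y})=0$ for all ${\bf x},{\bf y}\in{\cal U}$.

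Next I would compute the dimension of the orthogonal complement ${\cal U}^{\perp}:=\{{\bf v}\in\FFq2^n\ :\ h({\bf u},{\bf v})=0\ \text{for all }{\bf u}\in{\cal U}\}$. Fixing a basis ${\bf u}_1,\ldots,{\bf u}_d$ of ${\cal U}$, where $d:=\dim{\cal U}$, and setting $U:=[{\bf u}_1\ \cdots\ {\bf u}_d]$ (an $n\times d$ matrix of rank $d$), the $\FFq2$-linear map $\Psi:\FFq2^n\to\FFq2^d$ given by $\Psi({\bf v})=U^{\ast}\inv{A}{\bf v}$ has kernel precisely ${\cal U}^{\perp}$. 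Because $U^{\ast}$ has rank $d$ and $\inv{A}$ is invertible, $U^{\ast}\inv{A}$ has rank $d$, so $\Psi$ is surjective and $\dim{\cal U}^{\perp}=n-d$ by rank-nullity. On the other hand, total isotropy gives $\Psi({\bf u})=0$ for every ${\bf u}\in{\cal U}$, since the coordinates of $\Psi({\bf u})$ are the scalars $h({\bf u}_i,{\bf u})=0$; hence ${\cal U}\subseteq{\cal U}^{\perp}$. Comparing dimensions, $d=\dim{\cal U}\leq\dim{\cal U}^{\perp}=n-d$, and since $d$ is an integer this yields $d\leq\big\lfloor\frac{n}{2}\big\rfloor$.

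The only delicate point is the equality $\dim{\cal U}^{\perp}=n-d$, and this is exactly where invertibility of $A$ (equivalently nondegeneracy of $h$) enters; the remaining steps are bookkeeping with the sesquilinearity conventions, so I do not expect a real obstacle. I would avoid a counting approach based on Theorem~\ref{kardinalnost}: knowing only that all $\frac{q^{2d}-1}{q^2-1}$ projective points of ${\cal U}$ lie in $V_A$ gives the inequality $\frac{q^{2d}-1}{q^2-1}\leq|V_A|$, which is far too weak to force $d\leq\big\lfloor\frac{n}{2}\big\rfloor$ once $n$ is large, so the linear-algebraic route above is the efficient one.
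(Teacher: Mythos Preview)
The paper does not supply its own proof of this theorem; it is quoted verbatim from \cite[Theorem~9.1]{bose} and used as a black box. Your argument is the standard orthogonal-complement proof and is correct as written: the passage from ${\cal U}\subseteq V_A$ to total isotropy is exactly the content of~(\ref{opomba}), the computation $\dim{\cal U}^{\perp}=n-d$ via rank--nullity is where nondegeneracy of $\inv{A}$ enters, and the inclusion ${\cal U}\subseteq{\cal U}^{\perp}$ then forces $2d\leq n$. Your side remark that the na\"{\i}ve counting bound $\frac{q^{2d}-1}{q^2-1}\leq |V_A|$ from Theorem~\ref{kardinalnost} is too weak is also accurate: for invertible $A$ that inequality only gives roughly $d\lesssim n-1$, not $d\leq\lfloor n/2\rfloor$, so the linear-algebraic route is indeed the right one.
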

Let $n\geq 4$. Given an invertible hermitian matrix $A$, the \emph{point graph of a hermitian polar space} is a graph, here denoted by $H(n-1,q^2)$, with vertex set $V_A$, where two distinct vertices $\langle{\bf x}\rangle$ and $\langle{\bf y}\rangle$ are adjacent if and only if~(\ref{opomba}) holds. A subspace~${\cal U}$ from Theorem~\ref{bose_podprostor} is \emph{maximal totally isotropic} if $\dim {\cal U}= \big\lfloor\frac{n}{2}\big\rfloor$. An \emph{ovoid of a hermitian polar space} is a subset ${\cal O}\subseteq V_A$ meeting every maximal totally isotropic subspace in one point, that is, any maximal totally isotropic subspace ${\cal U}$ satisfies ${\bf x}\in {\cal U}$ for precisely one  $\langle{\bf x}\rangle\in{\cal O}$. A \emph{spread of a hermitian polar space} is a set $\{{\cal U}_1,\ldots ,{\cal U}_s\}$ of maximal totally isotropic subspaces that satisfies $\bigcup_{i=1}^{s}\{\langle{\bf x}\rangle\ :\ 0\neq {\bf x}\in {\cal U}_i\}=V_A$ and ${\cal U}_i\cap {\cal U}_j=\{0\}$ for $i\neq j$. Recall that given a graph $\Gamma$, its \emph{complement} $\inv{\Gamma}$ is a graph on the same vertex set as $\Gamma$ with $\{v,u\}\in E(\inv{\Gamma})$ if and only if $\{v,u\}\notin E(\Gamma)$. It was proved in~\cite[Theorem~3.5, Corollary~2.2]{cameron_kan} that the complement of $H(n-1,q^2)$ is a core if either an ovoid or a spread does not exist. In~\cite[Theorem~20]{thas} it was proved that $H(n-1,q^2)$ has no spread if $n-1\geq 5$ is odd. By~\cite[3.2.3 and 3.4.1(ii)]{payne_thas},  $H(3,q^2)$ has no spread as well.  In~\cite{thas2} it was proved that $H(n-1,q^2)$ has no ovoid if $n-1\geq 4$ is even.
Hence, the following lemma holds.
\begin{lemma}{(cf.~\cite{cameron_kan})}\label{cameron}
Let $n\geq 4$. The complement of $H(n-1,q^2)$ is a core.
\end{lemma}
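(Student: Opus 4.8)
The plan is to prove Lemma~\ref{cameron} by a short case analysis on the parity of $n$, feeding the relevant geometric non-existence result into \cite[Theorem~3.5, Corollary~2.2]{cameron_kan}. Recall that the latter guarantees that the complement of $H(n-1,q^2)$ is a core as soon as the associated hermitian polar space fails to admit an ovoid \emph{or} fails to admit a spread. Thus it suffices to show that for every $n\geq 4$ at least one of these two configurations is absent, and I would organize this according to whether $n$ is odd or even.

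First I would treat the case $n$ odd. Then $n\geq 5$, so $n-1\geq 4$ is even, and \cite{thas2} asserts that $H(n-1,q^2)$ has no ovoid; the conclusion then follows from \cite{cameron_kan}. Next, suppose $n$ is even. Then $n-1\geq 3$ is odd, and I would split once more: if $n=4$, so $n-1=3$, the non-existence of a spread of $H(3,q^2)$ is recorded in \cite[3.2.3 and 3.4.1(ii)]{payne_thas}; if $n\geq 6$, so $n-1\geq 5$ is odd, then \cite[Theorem~20]{thas} shows $H(n-1,q^2)$ has no spread. In both subcases \cite{cameron_kan} applies. Since every integer $n\geq 4$ lies in exactly one of these cases, the complement of $H(n-1,q^2)$ is a core in all of them.

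There is no genuine obstacle in this argument; the only things to watch are the bookkeeping at the two boundary values $n=4$ and $n=5$, and the check that the notions of ovoid and spread of a hermitian polar space employed in \cite{thas,thas2,payne_thas,cameron_kan} coincide with the ones fixed earlier in this section (they do, since all are phrased via maximal totally isotropic subspaces of $V_A$). In particular, the substantive content is entirely imported from the cited papers, so the proof amounts to assembling known results over the two residue classes of $n$ modulo~$2$.
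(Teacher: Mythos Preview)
Your proposal is correct and essentially reproduces the paper's own argument: the paper also derives the lemma by combining \cite[Theorem~3.5, Corollary~2.2]{cameron_kan} with the non-existence of spreads for $H(3,q^2)$ from \cite{payne_thas} and for $H(n-1,q^2)$ with $n-1\geq 5$ odd from \cite{thas}, and the non-existence of ovoids for $n-1\geq 4$ even from \cite{thas2}. The only difference is presentational---you organize the case split explicitly by the parity of $n$, whereas the paper states the cited results and then asserts the lemma.
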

We refer to~\cite{thas_hand,cameron_hand} for more on polar spaces and to~\cite{cameron_kan} for a very short introduction to this subject.

\section{Proofs}

In all results we assume $q\geq 2$  and $n\geq 2$ unless otherwise stated.
We start with a technical lemma about the field $\FFq2$.

\begin{lemma}\label{obseg_trace}
Let $a,b,x\in\FFq2$ with $a,b\neq 0$. Then there exists at most one scalar $y\in\FFq2\backslash\{x\}$ such that $\N(y)=\N(x)$ and $\N(a+by)=\N(a+bx)$.
\end{lemma}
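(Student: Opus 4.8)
The plan is to reformulate the two norm conditions as a system and count solutions. Writing $\N(y)=y\inv y=y y^q$, the condition $\N(y)=\N(x)=:\mu$ forces $y$ to lie in a fixed coset, so there are exactly $q+1$ candidates for $y$ when $\mu\neq 0$ by~\eqref{i3} (and $y=0$ is the only candidate when $\mu=0$, in which case there is nothing to prove). The second condition expands as
\begin{equation*}
\N(a+by)=(a+by)(\inv a+\inv b\inv y)=\N(a)+\N(b)\N(y)+a\inv b\inv y+\inv a b y,
\end{equation*}
and since $\N(y)=\mu$ is already fixed, this says $\inv a b y+\overline{\inv a b y}=\Tr(\inv a b\, y)$ must equal the fixed value $\Tr(\inv a b\, x)$. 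So after setting $c:=\inv a b\neq 0$, I must bound the number of $y$ satisfying simultaneously $\N(y)=\N(x)$ and $\Tr(cy)=\Tr(cx)$.

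First I would handle the case $\N(x)\neq 0$. If $y$ is a second solution besides $x$, then $\N(y)=\N(x)$ gives $y=x\zeta$ for some $\zeta$ with $\N(\zeta)=1$, i.e.\ $\zeta\inv\zeta=1$. Substituting into $\Tr(cy)=\Tr(cx)$ yields $\Tr(cx\zeta)=\Tr(cx)$, that is, $\Tr\big(cx(\zeta-1)\big)=0$; by~\eqref{i2} this means $cx(\zeta-1)=\inv w-w$ for some $w$. The key is to turn the two constraints on $\zeta$ — namely $\N(\zeta)=1$ and $cx(\zeta-1)$ having zero trace — into a low-degree polynomial equation in $\zeta$ over $\FFq2$ with at most two roots, one of which is $\zeta=1$ (corresponding to $y=x$), leaving at most one further root. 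Concretely, $\N(\zeta)=1$ is one equation of degree $q+1$ in $\zeta$, but combined with the linear-in-$\zeta$ trace condition one can eliminate $\inv\zeta=\zeta^q$: from $\Tr\big(cx(\zeta-1)\big)=0$ we get $\overline{cx}\,\overline{(\zeta-1)}=-cx(\zeta-1)$, hence $\inv\zeta = 1 - \frac{cx}{\overline{cx}}(\zeta-1)$, which is an affine function of $\zeta$. Plugging this expression for $\inv\zeta$ into $\zeta\inv\zeta=1$ produces a quadratic equation in the single variable $\zeta\in\FFq2$, so it has at most two solutions in $\FFq2$; since $\zeta=1$ is visibly one of them, at most one $\zeta\neq1$ survives, giving at most one $y\neq x$.

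The remaining case is $\N(x)=0$, forcing $x=0$ (as $\N$ vanishes only at $0$), so the claim reads: there is at most one $y\neq 0$ with $\N(y)=0$ and $\N(a+by)=\N(a)$; but $\N(y)=0$ already forces $y=0$, so there is no such $y$ and the statement is vacuous. The step I expect to be the genuine obstacle is the elimination in the main case: making sure that substituting the affine expression for $\inv\zeta$ into $\N(\zeta)=1$ really yields a nonzero quadratic (it could in principle degenerate), and checking what happens when $\frac{cx}{\overline{cx}}$ or related quantities take special values — in particular one must verify the quadratic is not identically zero, which would wrongly allow many solutions. I would dispose of that by noting that if the quadratic were identically zero then every $\zeta$ on the affine line $\inv\zeta=1-\frac{cx}{\overline{cx}}(\zeta-1)$ would satisfy $\N(\zeta)=1$, contradicting that the norm-one elements form a proper subset of size $q+1<q^2$ once $q\geq 2$. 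This is essentially the only delicate point; everything else is the routine algebra sketched above.
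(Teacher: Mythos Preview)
Your argument is correct. Both you and the paper begin by expanding $\N(a+by)=\N(a+bx)$ and, using $\N(y)=\N(x)$, reduce it to the linear condition $\Tr(\inv a b\,y)=\Tr(\inv a b\,x)$. From there the two proofs diverge slightly. You parametrize $y=x\zeta$ with $\N(\zeta)=1$ (after disposing of the trivial case $x=0$), use the trace condition to express $\inv\zeta$ as an affine function of $\zeta$, and substitute into $\zeta\inv\zeta=1$ to obtain a quadratic in $\zeta$ with $\zeta=1$ as one root; this yields the bound ``at most one further root'' by degree counting. The paper instead manipulates the two conditions directly (without the substitution $y=x\zeta$) and produces the explicit formula $y=\dfrac{a\inv b}{\inv a b}\,\inv x$, which immediately gives uniqueness and is a bit more informative. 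Your degeneracy worry is in fact unnecessary: in your quadratic the leading coefficient is $-\dfrac{cx}{\overline{cx}}$, which is nonzero since $c=\inv a b\neq 0$ and $x\neq 0$ in the main case, so the equation is always a genuine quadratic and no separate counting argument is needed.
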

\begin{proof}
Any such $y$ satisfies $\Tr\big(\inv{a}b(x-y)\big)=0$. By~(\ref{i2}) there is $z\in \FFq2$ such that
\begin{equation}\label{eq25}
\inv{a}b(x-y)=\inv{z}-z.
\end{equation}
The equation $\N(\inv{a}bx)=\N(\inv{a}by)$ transforms into $(\inv{z}-z)^2=(a\inv{b}\inv{y}-\inv{a}by)(\inv{z}-z)$. Since  $\inv{z}-z\neq 0$, we deduce that $\inv{z}-z=a\inv{b}\inv{y}-\inv{a}by$. Hence,  $y=\frac{a\inv{b}}{\inv{a}b}\inv{x}$ by~(\ref{eq25}).
\end{proof}

We continue with three results on column vectors. We say that  ${\bf x}_1,\ldots,{\bf x}_k\in \FFq2^n$ are \emph{orthonormal} if ${\bf x}_i^{\ast}{\bf x}_i=1$ for all $i$ and ${\bf x}_i^{\ast}{\bf x}_j=0$ if $i\neq j$. Orthonormal vectors are linearly independent.
\begin{prop}\label{ortonormalnost}
Let $1\leq j<n$. If ${\bf x}_1,\ldots,{\bf x}_j\in \FFq2^n$ are orthonormal, there exist vectors ${\bf x}_{j+1},\ldots,{\bf x}_n$ such that ${\bf x}_1,\ldots,{\bf x}_n$ is an orthonormal basis of~$\FFq2^n$.
\end{prop}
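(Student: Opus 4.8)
The plan is to prove the statement by induction on $n-j$, or equivalently, to show that any $j$ orthonormal vectors with $j<n$ can be extended by one more orthonormal vector ${\bf x}_{j+1}$; iterating this produces the full orthonormal basis. So the key step is: given orthonormal ${\bf x}_1,\ldots,{\bf x}_j$ with $j<n$, find ${\bf x}_{j+1}\in\FFq2^n$ with ${\bf x}_{j+1}^{\ast}{\bf x}_{j+1}=1$ and ${\bf x}_{j+1}^{\ast}{\bf x}_i=0$ for $i\le j$. First I would consider the subspace ${\cal W}=\{{\bf y}\in\FFq2^n\,:\,{\bf x}_i^{\ast}{\bf y}=0\text{ for all }i\le j\}$, which has dimension $n-j\ge 1$, since it is the kernel of the surjective linear map ${\bf y}\mapsto({\bf x}_1^{\ast}{\bf y},\ldots,{\bf x}_j^{\ast}{\bf y})$ (surjectivity follows from ${\bf x}_i^{\ast}{\bf x}_k=\delta_{ik}$). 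It remains to find ${\bf y}\in{\cal W}$ with ${\bf y}^{\ast}{\bf y}=1$.

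The map ${\bf y}\mapsto{\bf y}^{\ast}{\bf y}$ restricted to ${\cal W}$ is the hermitian form associated to some hermitian matrix $B$ of rank equal to $\dim{\cal W}=n-j\ge 1$: indeed, if the columns of a matrix $Q$ form a basis of ${\cal W}$, then for ${\bf y}=Q{\bf z}$ we have ${\bf y}^{\ast}{\bf y}={\bf z}^{\ast}(Q^{\ast}Q){\bf z}$, and $B:=Q^{\ast}Q$ is hermitian of rank $n-j$ because $Q$ has full column rank. By~(\ref{i8}) applied to $B$, there is an invertible matrix $P$ with $B=P\diag(1,\ldots,1,0,\ldots,0)P^{\ast}$ (with exactly $n-j\ge 1$ ones), so $B$ represents a nonzero value; concretely, taking ${\bf z}$ to be the first column of $(P^{-1})^{\ast}$ gives ${\bf z}^{\ast}B{\bf z}=1$. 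Then ${\bf x}_{j+1}:=Q{\bf z}$ lies in ${\cal W}$ and satisfies ${\bf x}_{j+1}^{\ast}{\bf x}_{j+1}=1$, completing the extension step.

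The main thing to get right is the claim that $B=Q^{\ast}Q$ has rank exactly $n-j$ and hence, by the diagonalization~(\ref{i8}), genuinely takes the value $1$ on ${\cal W}$ — this is where the hermitian structure matters, since over a general field $Q^{\ast}Q$ could be singular, but here~(\ref{i8}) guarantees that any nonzero hermitian matrix represents $1$ (its normal form has a leading $1$). One should double-check that $Q^{\ast}Q$ is nonzero: if $Q^{\ast}Q=0$ then every column ${\bf q}$ of $Q$ satisfies ${\bf q}^{\ast}{\bf q}=0$ and more generally ${\cal W}\subseteq V_{\overline{I}}=V_I$, forcing a totally isotropic subspace of dimension $n-j$; for $j<n$ this contradicts Theorem~\ref{bose_podprostor} applied to $A=I$ as soon as $n-j>\lfloor n/2\rfloor$, and in the remaining range a direct argument shows $Q^{\ast}Q\ne 0$ (e.g. the identity matrix $I\in\hgl$ and ${\bf e}_1^{\ast}I{\bf e}_1=1$, and one can always pick the basis of ${\cal W}$ to avoid total isotropy since $V_I$ does not contain a full complement). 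The cleanest route, and the one I would ultimately write, avoids this case analysis entirely by invoking~(\ref{i8}) directly: $Q^{\ast}Q$ is hermitian of rank exactly $n-j$ because $\operatorname{rk}(Q^{\ast}Q)=\operatorname{rk}(Q)=n-j$ over $\FFq2$ (a standard fact for the conjugate-transpose Gram matrix over a field with involution, provable from~(\ref{i8}) again), hence nonzero, hence represents $1$.
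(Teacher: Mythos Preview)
Your overall strategy matches the paper's: reduce to extending by one vector, look at the orthogonal complement ${\cal W}=\ker X$ (dimension $n-j$), find a non-isotropic vector there, and normalize using surjectivity of the norm. The paper does exactly this.

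The gap is in your justification that ${\cal W}$ is not totally isotropic, i.e.\ that $Q^{\ast}Q\neq 0$. Your ``cleanest route'' asserts $\rank(Q^{\ast}Q)=\rank(Q)$ as a ``standard fact\ldots provable from~(\ref{i8})''; this is false over $\FFq2$. For instance, with $n=2$ and $Q=(1,a)^{\tr}$ where $\N(a)=-1$, we have $\rank Q=1$ but $Q^{\ast}Q=1+\N(a)=0$. Isotropic vectors are precisely the obstruction, and~(\ref{i8}) says nothing about the rank of $Q^{\ast}Q$ relative to $\rank Q$. Your alternative route via Theorem~\ref{bose_podprostor} only handles $n-j>\lfloor n/2\rfloor$; the parenthetical for the remaining range (``one can always pick the basis of ${\cal W}$ to avoid total isotropy'') is not an argument, since total isotropy of ${\cal W}$ is basis-independent.

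The paper closes this gap as follows. One first checks (using ${\bf x}_i^{\ast}{\bf x}_k=\delta_{ik}$) that ${\bf x}_1,\ldots,{\bf x}_j$ together with any basis ${\bf z}_1,\ldots,{\bf z}_{n-j}$ of ${\cal W}$ are linearly independent, hence span $\FFq2^n$. If ${\cal W}\subseteq V_I$, then by~(\ref{opomba}) every ${\bf y}\in{\cal W}$ satisfies ${\bf y}^{\ast}{\bf z}_s=0$ for all $s$, and by definition ${\bf y}^{\ast}{\bf x}_i=0$ for all $i$; thus ${\bf y}$ is orthogonal to a basis of $\FFq2^n$, forcing ${\bf y}=0$. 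This contradicts $\dim{\cal W}\ge 1$. That is the missing step; once you have it, your normalization via~(\ref{i8}) (or directly via surjectivity of $\N$) works fine.
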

\begin{proof}
It suffices to find a vector ${\bf x}_{j+1}$ such that ${\bf x}_{j+1}^{\ast}{\bf x}_{j+1}=1$ and ${\bf x}_{i}^{\ast}{\bf x}_{j+1}=0$ for $i\leq j$. Then the proof ends by an induction.

Consider the $j\times n$ matrix $X$ with ${\bf x}_i^{\ast}$ as the $i$-th row for all $i\leq j$. The kernel $\ker X$ is $n-j$ dimensional. If it contains a nonzero vector ${\bf y}_{j+1}$ with $\langle{\bf y}_{j+1}\rangle\notin V_I$, then we choose a nonzero $a\in \FFq2$ such that $\N(a)=({\bf y}_{j+1}^{\ast}{\bf y}_{j+1})^{-1}$, and define ${\bf x}_{j+1}:=a{\bf y}_{j+1}$. We will show that such ${\bf y}_{j+1}$ indeed exists.

Let ${\bf z}_1,\ldots,{\bf z}_{n-j}$ be a basis of $\ker X$. We claim that  ${\bf z}_1,\ldots,{\bf z}_{n-j},{\bf x}_1,\ldots,{\bf x}_{j}$  are linearly independent vectors.
In fact, if $\sum_{s=1}^{n-j} c_s {\bf z}_s+\sum_{t=1}^{j} d_t {\bf x}_t=0$, then a multiplication by ${\bf x}_{t}^{\ast}$ from the left implies that $0=d_t {\bf x}_{t}^{\ast}{\bf x}_t=d_t$ for all $t$. Hence, $\sum_{s=1}^{n-j} c_s {\bf z}_s=0$. Since ${\bf z}_1,\ldots,{\bf z}_{n-j}$ are linearly independent, we deduce that $c_s=0$ for all $s$. This proves that matrix $Y$, with ${\bf x}_1^{\ast},\ldots,{\bf x}_{j}^{\ast}$ as first $j$ rows and ${\bf z}_1^{\ast},\ldots,{\bf z}_{n-j}^{\ast}$ as last $n-j$ rows, is invertible. If $\ker X\subseteq V_I$, then~(\ref{opomba}) implies that $\ker X\subseteq \ker Y=\{0\}$, a contradiction. Hence,  $\langle{\bf y}_{j+1}\rangle\notin V_I$ for some ${\bf y}_{j+1}\in\ker X$.
\end{proof}

\begin{lemma}\label{lemma-mesano}
Suppose that ${\bf x}_1,\ldots,{\bf x}_4\in \FFq2^n$ satisfy ${\bf x}_i^{\ast}{\bf x}_i=0$ for all $i$ and assume that ${\bf x}_1^{\ast}{\bf x}_4\neq 0$. Then there exist $a_2,a_3,a_4\in\FFq2$ not all zero such that
\begin{equation}\label{eq32}
(a_1{\bf x}_1+a_2{\bf x}_2+a_3{\bf x}_3+a_4{\bf x}_4)^{\ast}(a_1{\bf x}_1+a_2{\bf x}_2+a_3{\bf x}_3+a_4{\bf x}_4)=0
\end{equation}
and
\begin{equation}\label{eq33}
{\bf x}_1^{\ast}(a_1{\bf x}_1+a_2{\bf x}_2+a_3{\bf x}_3+a_4{\bf x}_4)=0
\end{equation}
for all $a_1\in \FFq2$.
\end{lemma}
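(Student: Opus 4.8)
The plan is to reduce the requirements, which are quantified over all $a_1\in\FFq2$, to a pair of scalar conditions on the single vector ${\bf w}:=a_2{\bf x}_2+a_3{\bf x}_3+a_4{\bf x}_4$, and then to obtain a suitable ${\bf w}$ from the fact that a hermitian form in two variables over $\FFq2$ is isotropic. First I would put ${\bf v}:=a_1{\bf x}_1+{\bf w}$ and expand, using ${\bf x}_1^{\ast}{\bf x}_1=0$:
\begin{equation*}
{\bf x}_1^{\ast}{\bf v}={\bf x}_1^{\ast}{\bf w},\qquad {\bf v}^{\ast}{\bf v}=\inv{a_1}\,{\bf x}_1^{\ast}{\bf w}+a_1\,\inv{{\bf x}_1^{\ast}{\bf w}}+{\bf w}^{\ast}{\bf w}.
\end{equation*}
Hence it suffices to produce $a_2,a_3,a_4\in\FFq2$, not all zero, with ${\bf x}_1^{\ast}{\bf w}=0$ and ${\bf w}^{\ast}{\bf w}=0$: for such a choice both right-hand sides above vanish for every $a_1$, which is precisely~(\ref{eq33}) and~(\ref{eq32}).

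Writing $\beta_i:={\bf x}_1^{\ast}{\bf x}_i$ for $i=2,3,4$, the condition ${\bf x}_1^{\ast}{\bf w}=0$ becomes $a_2\beta_2+a_3\beta_3+a_4\beta_4=0$, and since $\beta_4\neq0$ by hypothesis I would solve it by setting $a_4:=-\beta_4^{-1}(a_2\beta_2+a_3\beta_3)$, leaving $a_2,a_3$ free. Substituting gives ${\bf w}=a_2{\bf y}_2+a_3{\bf y}_3$ with ${\bf y}_i:={\bf x}_i-\beta_4^{-1}\beta_i{\bf x}_4$, so that, if $Y$ denotes the $n\times2$ matrix with columns ${\bf y}_2$ and ${\bf y}_3$, the map $(a_2,a_3)\mapsto{\bf w}^{\ast}{\bf w}$ is the hermitian form with hermitian matrix $Y^{\ast}Y$. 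Thus the lemma reduces to showing that every hermitian form in two variables over $\FFq2$ possesses a nonzero isotropic vector: applying this gives some $(a_2,a_3)\neq(0,0)$ with ${\bf w}^{\ast}{\bf w}=0$, and then $(a_2,a_3,a_4)$ is not all zero, as required.

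To prove that last statement I would take an arbitrary $2\times2$ hermitian matrix $H$ over $\FFq2$ and use~(\ref{i8}) to write $H=PDP^{\ast}$ with $P$ invertible and $D=\diag(\epsilon_1,\epsilon_2)$, $\epsilon_j\in\{0,1\}$. If $\epsilon_2=0$, then ${\bf u}:=(P^{\ast})^{-1}{\bf e}_2$ is nonzero and ${\bf u}^{\ast}H{\bf u}={\bf e}_2^{\ast}D{\bf e}_2=0$. If $\epsilon_1=\epsilon_2=1$, then $H=PP^{\ast}$, whence ${\bf u}^{\ast}H{\bf u}=\N(z_1)+\N(z_2)$ for ${\bf z}:=P^{\ast}{\bf u}=(z_1,z_2)^{\tr}$; choosing $z_2:=1$ and $z_1$ with $\N(z_1)=-1$, possible since $\N$ is surjective onto $\FF$, yields the nonzero isotropic vector ${\bf u}:=(P^{\ast})^{-1}(z_1,1)^{\tr}$. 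I expect the only genuine step to be the reduction in the first paragraph — noticing that the two constraints over all $a_1$ collapse to ${\bf x}_1^{\ast}{\bf w}=0$ and ${\bf w}^{\ast}{\bf w}=0$; everything afterward is routine finite-field linear algebra, since a hermitian form of dimension two over a finite field is automatically isotropic. Note also that ${\bf x}_i^{\ast}{\bf x}_i=0$ is really used only for $i=1$, and that ${\bf x}_1^{\ast}{\bf x}_4\neq0$ enters only to make the linear equation for $a_4$ solvable.
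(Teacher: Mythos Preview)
Your proof is correct and follows essentially the same route as the paper: both solve the linear constraint ${\bf x}_1^{\ast}{\bf w}=0$ for $a_4$ and then find $(a_2,a_3)\neq(0,0)$ making the resulting $2\times 2$ hermitian form isotropic. The only cosmetic differences are that the paper writes out the $2\times 2$ matrix explicitly (it is your $Y^{\ast}Y$, simplified using ${\bf x}_i^{\ast}{\bf x}_i=0$) and invokes Theorem~\ref{kardinalnost} for the isotropic vector, whereas you package the matrix as $Y^{\ast}Y$ and reprove the isotropy from~(\ref{i8}); your observation that only the hypothesis ${\bf x}_1^{\ast}{\bf x}_1=0$ is actually used is also correct.
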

\begin{proof}
Consider the $2\times 2$ hermitian matrix
$$A=\begin{bmatrix}
-\Tr\big(\frac{{\bf x}_1^{\ast}{\bf x}_2\cdot {\bf x}_2^{\ast}{\bf x}_4}{{\bf x}_1^{\ast}{\bf x}_4}\big)&{\bf x}_2^{\ast}{\bf x}_3-\frac{{\bf x}_4^{\ast}{\bf x}_3\cdot {\bf x}_2^{\ast}{\bf x}_1}{{\bf x}_4^{\ast}{\bf x}_1}-\frac{{\bf x}_1^{\ast}{\bf x}_3\cdot {\bf x}_2^{\ast}{\bf x}_4}{{\bf x}_1^{\ast}{\bf x}_4}\\
{\bf x}_3^{\ast}{\bf x}_2-\frac{{\bf x}_3^{\ast}{\bf x}_4\cdot {\bf x}_1^{\ast}{\bf x}_2}{{\bf x}_1^{\ast}{\bf x}_4}-\frac{{\bf x}_3^{\ast}{\bf x}_1\cdot {\bf x}_4^{\ast}{\bf x}_2}{{\bf x}_4^{\ast}{\bf x}_1}&-\Tr\big(\frac{{\bf x}_1^{\ast}{\bf x}_3\cdot {\bf x}_3^{\ast}{\bf x}_4}{{\bf x}_1^{\ast}{\bf x}_4}\big)
\end{bmatrix}.$$
By Theorem~\ref{kardinalnost}, $V_{\inv{A}}\neq \emptyset$, so there exist $(a_2,a_3)\neq (0,0)$ such that \begin{equation}\label{eq31}
[\begin{smallmatrix} a_2\\
a_3\end{smallmatrix}]^{\ast} A [\begin{smallmatrix} a_2\\
a_3\end{smallmatrix}]=0.
\end{equation}
Choose $a_4=-\frac{a_2{\bf x}_1^{\ast}{\bf x}_2+a_3{\bf x}_1^{\ast}{\bf x}_3}{{\bf x}_1^{\ast}{\bf x}_4}$. Then (\ref{eq33}) is satisfied. Moreover, a straightforward computation with a use of (\ref{eq31}) shows that
$(a_2{\bf x}_2+a_3{\bf x}_3+a_4{\bf x}_4)^{\ast}(a_2{\bf x}_2+a_3{\bf x}_3+a_4{\bf x}_4)=0$, so (\ref{eq33}) implies (\ref{eq32}).
\end{proof}

\begin{lemma}\label{det-tenzor}
If $\alpha_1,\ldots, \alpha_n\in\FF$ and  ${\bf x}_1,\ldots,{\bf x}_n\in\FFq2^n$, then
$$\det\left(\sum_{i=1}^n\alpha_i{\bf x}_i{\bf x}_i^{\ast}\right)=\det\left(\sum_{i=1}^n{\bf x}_i{\bf x}_i^{\ast}\right)\prod_{i=1}^n\alpha_i .$$
\end{lemma}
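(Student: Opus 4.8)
The plan is to recognize both sides as arising from a single congruence-type factorization. Let $X$ denote the $n\times n$ matrix whose $i$-th column is ${\bf x}_i$, and let $D:=\diag(\alpha_1,\ldots,\alpha_n)$. From the definition of matrix multiplication one has
$$\sum_{i=1}^n{\bf x}_i{\bf x}_i^{\ast}=XX^{\ast}\qquad\text{and}\qquad \sum_{i=1}^n\alpha_i{\bf x}_i{\bf x}_i^{\ast}=XDX^{\ast},$$
the second identity because $D$ is diagonal, so $XD$ has $i$-th column $\alpha_i{\bf x}_i$, and then $(XD)X^{\ast}=\sum_i(\alpha_i{\bf x}_i){\bf x}_i^{\ast}$.

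Next I would apply the multiplicativity of the determinant, which was already used after Lemma~\ref{handbook2}: $\det(XDX^{\ast})=\det X\cdot\det D\cdot\det X^{\ast}$ and $\det(XX^{\ast})=\det X\cdot\det X^{\ast}$. Since $\det X^{\ast}=\det\big(\inv{X}^{\tr}\big)=\inv{\det X}$, and $\det D=\prod_{i=1}^n\alpha_i$ (which lies in $\FF$ because each $\alpha_i$ does), we obtain
$$\det\Big(\sum_{i=1}^n\alpha_i{\bf x}_i{\bf x}_i^{\ast}\Big)=\N(\det X)\prod_{i=1}^n\alpha_i,\qquad \det\Big(\sum_{i=1}^n{\bf x}_i{\bf x}_i^{\ast}\Big)=\N(\det X).$$
Comparing these two expressions yields the claimed identity.

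The argument needs no case distinction: if the vectors ${\bf x}_i$ happen to be linearly dependent then $X$ is singular, $\N(\det X)=0$, and both sides of the asserted equality vanish. I do not expect any genuine obstacle here; the only points requiring care are the bookkeeping that $\sum_i\alpha_i{\bf x}_i{\bf x}_i^{\ast}$ really equals $XDX^{\ast}$, and that the conjugate transpose sends $\det X$ to its involution-conjugate $\inv{\det X}$, so that $XX^{\ast}$ and $XDX^{\ast}$ have determinants $\N(\det X)$ and $\N(\det X)\prod_i\alpha_i$ respectively. Everything else is the multiplicativity of the determinant.
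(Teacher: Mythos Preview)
Your proof is correct and follows essentially the same route as the paper: write the two sums as matrix products and apply multiplicativity of the determinant. The only cosmetic difference is that the paper first picks $c_i\in\FFq2$ with $\N(c_i)=\alpha_i$ so that both sums become of the form $QQ^{\ast}$ and $PP^{\ast}$, whereas you factor $\sum_i\alpha_i{\bf x}_i{\bf x}_i^{\ast}=XDX^{\ast}$ directly; your version is marginally more economical since it avoids invoking surjectivity of the norm.
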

\begin{proof}
Pick $c_i\in\FFq2$ such that $\N(c_i)=\alpha_i$.
Let $Q$ and $P$ be $n\times n$ matrices with $c_i{\bf x}_i$  and ${\bf x}_i$
as the $i$-th column respectively. Then $\det Q=\det P\cdot \prod_{i=1}^n c_i$. If ${\bf e}_i$ denotes the $i$-th standard vector, then
$\sum_{i=1}^n\alpha_i{\bf x}_i{\bf x}_i^{\ast}=\sum_{i=1}^n Q{\bf e}_i{\bf e}_i^{\ast}Q^{\ast}=QQ^{\ast}$ and $\sum_{i=1}^n{\bf x}_i{\bf x}_i^{\ast}=PP^{\ast}$, so the proof ends by multiplicativity of the determinant.
\end{proof}

The next four results involve graph $\hgl$.

\begin{prop}\label{lemma-povez-det}
Let $q\geq 4$ and let $\lambda\in\FF$ be nonzero. The subgraph $\Gamma$ in $\hgl$, which is induced by the preimage $\det^{-1}(\lambda)$, is connected.
\end{prop}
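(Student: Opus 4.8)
The plan is to show that any two matrices $A, B \in \det^{-1}(\lambda)$ can be joined by a path in $\hgl$ all of whose vertices have determinant $\lambda$. Since the maps $X \mapsto PXP^{\ast}$ with $\det(PP^{\ast}) = 1$ act on $\det^{-1}(\lambda)$ as automorphisms of $\Gamma$ and act transitively enough on diagonal forms, the first reduction is to bring $A$ into a convenient normal form using~(\ref{i8}): after such a substitution we may assume $A = \diag(1,\ldots,1,\mu)$ with $\mu = \lambda$, or more generally any diagonal hermitian matrix with determinant $\lambda$. So it suffices to show that every $B \in \det^{-1}(\lambda)$ is connected, within $\Gamma$, to one fixed such diagonal matrix.

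The key mechanism is the $q$-clique: by Corollary~\ref{handbook}, if $A \in \hgl$ and ${\bf x} \neq 0$ satisfies ${\bf x}^{\ast} A^{-1} {\bf x} = 0$, then all $q$ matrices $A + \alpha {\bf x}{\bf x}^{\ast}$, $\alpha \in \FF$, are invertible with the same determinant $\det A$, hence form a path (in fact a clique) inside $\Gamma$. Thus from $A$ we can move freely along any rank-one direction ${\bf x}{\bf x}^{\ast}$ that is "isotropic for $A^{-1}$" without leaving $\det^{-1}(\lambda)$. The strategy is then a two-step argument: first use such moves (possibly chaining several of them) to transform $A$ into a matrix sharing more and more structure with $B$ — e.g. reducing the rank of $A - B$ one unit at a time — and second, to ensure at each stage that the required isotropic direction exists. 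Existence of isotropic vectors is where the projective geometry enters: Theorem~\ref{kardinalnost} guarantees $V_{\inv{A}} \neq \emptyset$, so there is always \emph{some} nonzero ${\bf x}$ with ${\bf x}^{\ast} A^{-1} {\bf x} = 0$; the real work is to find such an ${\bf x}$ that also points "toward" $B$, i.e. lies in a prescribed subspace (such as the column space of $A - B$, or a complement chosen so that the step genuinely decreases $\rank(A-B)$). Here one would invoke the cardinality formula of Theorem~\ref{kardinalnost} together with a counting/dimension argument: a hermitian variety in $\FFq2^k$ has roughly $q^{2k-3}$ points, which for $q \geq 4$ comfortably exceeds the number of points one must avoid, forcing the existence of a usable direction. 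Lemma~\ref{lemma-mesano} and Lemma~\ref{det-tenzor} are the tailored tools for handling the small cases and for controlling how the determinant behaves under simultaneous rank-one updates.

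Concretely, I would argue by induction on $r = \rank(A - B)$. If $r = 1$ then $A$ and $B$ are adjacent and, when they lie in a common $q$-clique, we are done immediately; when they lie in a $(q-1)$-clique the determinants along that clique are all distinct, so a single edge need not stay in $\det^{-1}(\lambda)$, and one must instead take a short detour through an auxiliary vertex $C$ with $\det C = \lambda$ adjacent to both — which exists because the neighborhoods of $A$ and $B$ inside $\det^{-1}(\lambda)$ are large (again by the variety count). For the inductive step, write $B = A + \sum_{i=1}^{r} \beta_i {\bf y}_i {\bf y}_i^{\ast}$ with the ${\bf y}_i$ linearly independent (using~(\ref{i8}) applied to $A-B$, absorbing signs into the $\beta_i \in \FF$); then choose a nonzero vector ${\bf x}$ in $\mathrm{span}({\bf y}_1,\ldots,{\bf y}_r)$ with ${\bf x}^{\ast} A^{-1} {\bf x} = 0$ and such that moving $A$ to $A' := A + \alpha {\bf x}{\bf x}^{\ast}$ (some $\alpha \in \FF$) strictly decreases the rank of $A' - B$; apply the induction hypothesis to $A'$ and $B$. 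The main obstacle is precisely this choice of ${\bf x}$: one must simultaneously meet a linear condition (lying in the span of the ${\bf y}_i$), a quadratic condition (isotropy for $A^{-1}$), and an open genericity condition (that the resulting rank actually drops), and verify that the solution set is nonempty for all $q \geq 4$ — which is exactly why the hypothesis $q \geq 4$ appears and why the two smallest fields are excluded. I expect the bulk of the write-up to be this existence argument, combining Theorem~\ref{kardinalnost} with elementary estimates, and a separate careful treatment of the $(q-1)$-clique detour.
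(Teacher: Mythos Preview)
Your inductive scheme has a genuine gap at the key step. You want, for $A,B\in\det^{-1}(\lambda)$ with $r=\rank(A-B)\geq 2$, a vector ${\bf x}$ in the column space of $B-A$ satisfying simultaneously (a) ${\bf x}^{\ast}A^{-1}{\bf x}=0$, so that $A':=A+\alpha{\bf x}{\bf x}^{\ast}$ stays in $\det^{-1}(\lambda)$, and (b) $\rank(A'-B)<r$ for some $\alpha$. By Corollary~\ref{handbook} applied to the restriction of $B-A$ to its own column space, (b) forces ${\bf x}^{\ast}(B-A)^{-1}{\bf x}\neq 0$ on that space. These two conditions can be incompatible: take $n=2$, $q\geq 5$ odd, any $A\in\hgldva$, and $B:=-A$. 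Then $\det B=\det A$, $B-A=-2A$ has rank $2$, and every ${\bf x}$ with ${\bf x}^{\ast}A^{-1}{\bf x}=0$ automatically has ${\bf x}^{\ast}(B-A)^{-1}{\bf x}=-\tfrac{1}{2}{\bf x}^{\ast}A^{-1}{\bf x}=0$, so $\rank(2A+\alpha{\bf x}{\bf x}^{\ast})=2$ for every $\alpha$. The same obstruction occurs for any $n$ whenever $B=\mu A$ with $\mu^n=1$, $\mu\neq 1$. Thus the ``counting/dimension argument'' you sketch cannot succeed in general; the isotropic cones of $A^{-1}$ and $(B-A)^{-1}$ can coincide on the relevant subspace. (Incidentally, your worry about the $r=1$ base case is unnecessary: two adjacent matrices with equal determinant always lie in a $q$-clique, never a $(q-1)$-clique, so no detour is needed there.)

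The paper avoids this obstruction by \emph{not} insisting that every intermediate matrix lie in $\det^{-1}(\lambda)$. Its Step~3 builds a walk $B_0,B_1,\ldots,B_{2m}$ in $\hgl$ with $\rank(B_{i+1}-B_i)\leq 1$ and $\det B_{2i}=\lambda$ only at the \emph{even} indices; the odd-indexed matrices may leave $\det^{-1}(\lambda)$. This is where Lemma~\ref{det-tenzor} is used: one swaps the rank-one summands of $A$ for those of $B$ one at a time, correcting the determinant after each swap. Step~2 then closes the gaps: if $A_1,A_2\in\det^{-1}(\lambda)$ share a common neighbor $B$ in $\hgl$ (of \emph{any} determinant), a congruence sends this configuration to $I+(\tfrac{\lambda}{\det B}-1){\bf y}_i{\bf y}_i^{\ast}$ with ${\bf y}_i^{\ast}{\bf y}_i=1$, and Step~1 connects such matrices by an explicit short walk inside the appropriate determinant level. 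So the essential idea you are missing is this two-layer structure---allow excursions out of $\Gamma$, then repair them locally---rather than trying to descend in $\rank(A-B)$ while staying inside $\Gamma$ throughout. Lemma~\ref{lemma-mesano}, which you cite, plays no role here; it is used later in the proof of Theorem~\ref{glavni}.
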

\begin{proof}
We split the proof in three steps.
\begin{myenumerate}{Step}
\item If ${\bf y}_1^{\ast}{\bf y}_1=1={\bf y}_2^{\ast}{\bf y}_2$,  then there is a walk in $\Gamma$ that connects $I+(\lambda-1){\bf y}_1{\bf y}_1^{\ast}$ with $I+(\lambda-1){\bf y}_2{\bf y}_2^{\ast}$.

    Determinants of $A_1:=I+(\lambda-1){\bf y}_1{\bf y}_1^{\ast}$ and $A_2:=I+(\lambda-1){\bf y}_2{\bf y}_2^{\ast}$ equal $\lambda$  by Corollary~\ref{handbook}. The proof is trivial if ${\bf y}_1, {\bf y}_2$ are linearly dependent, so assume they are independent. We can assume that $\lambda\neq 1$.
    Three cases are considered.
    \medskip

    \emph{Case~1.} Assume that ${\bf y}_1^{\ast}{\bf y}_2=0$. Pick $a_1,a_2\in \FFq2$ such that $\N(a_1)=-\lambda$ and $\N(a_2)=1$. Corollary~\ref{handbook} implies that
        \begin{align}
        \nonumber (a_1{\bf y}_1+a_2{\bf y}_2)^{\ast}&A_1^{-1}(a_1{\bf y}_1+a_2{\bf y}_2)=\\
        \label{eq26}&=(a_1{\bf y}_1+a_2{\bf y}_2)^{\ast}\big(I+(\lambda^{-1}-1){\bf y}_1{\bf y}_1^{\ast}\big)(a_1{\bf y}_1+a_2{\bf y}_2)=0,
        \end{align}
        so matrix $B_1:=A_1+(a_1{\bf y}_1+a_2{\bf y}_2)(a_1{\bf y}_1+a_2{\bf y}_2)^{\ast}$ has determinant $\lambda$ by Corollary~\ref{handbook}. Similarly we see that $B_2:=A_2+(b_1{\bf y}_1+b_2{\bf y}_2)(b_1{\bf y}_1+b_2{\bf y}_2)^{\ast}$ has determinant $\lambda$ for $b_1:=a_1 c$ and $b_2:=a_2\inv{c^{-1}}$, where $\N(c)=\lambda^{-1}$. Since $B_1=B_2+2(1-\lambda){\bf y}_2{\bf y}_2^{\ast},$ we see that $B_1$ and $B_2$ are either adjacent or the same. So $A_1,B_1,B_2,A_2$ or $A_1,B_1, A_2$ is the desired walk.\medskip

     \emph{Case~2.} Assume that ${\bf y}_1^{\ast}{\bf y}_2\cdot {\bf y}_2^{\ast}{\bf y}_1\notin\{0,1\}$. Choose $a_2:=\frac{1}{{\bf y}_1^{\ast}{\bf y}_2}$. It follows from the assumption that $\lambda(1-\N(a_2))\neq 0$, so by (\ref{i3}) there exist $q+1>2$ scalars $a_1$ such that $\N(1+a_1)=\lambda(1-\N(a_2))$. By Lemma~\ref{obseg_trace} we may choose $a_1$ such that $\N(a_1)\neq \N(a_2)$. A straightforward calculation shows that (\ref{eq26}) holds, so $\det\big(A_1+\mu(a_1{\bf y}_1+a_2{\bf y}_2)(a_1{\bf y}_1+a_2{\bf y}_2)^{\ast}\big)=\lambda$ for all $\mu\in\FF$. Choose $\mu:=\frac{\lambda-1}{\N(a_2)-\N(a_1)}$ and denote the corresponding matrix by $B$. A straightforward calculation shows that $B$ is adjacent to $A_2$, so $A_1,B,A_2$ is the desired walk.

    \emph{Case~3.} Assume that ${\bf y}_1^{\ast}{\bf y}_2\cdot {\bf y}_2^{\ast}{\bf y}_1=1$. It suffices to show that there is a walk in $\Gamma$ that connects $A_i$ with $C:=I+(\lambda-1){\bf e}_1{\bf e}_1^{\ast}$. Let $i\in\{1,2\}$ and denote ${\bf y}_i=:(u_1,\ldots,u_n)^{\tr}$. If $\N(u_1)\neq 1$, then ${\bf y}_i^{\ast} {\bf e}_1{\bf e}_1^{\ast} {\bf y}_i\neq 1$, so we find a walk in $\Gamma$ from $A_i$ to $C$ in the same way as we constructed the desired walk in Cases~1 and~2. Let $\N(u_1)=1$. There is nothing to prove if ${\bf y}_i{\bf y}_i^{\ast}={\bf e}_1{\bf e}_1^{\ast}$, so we may assume that $u_k\neq 0$ for some $k\geq 2$. To simplify the writings let $k=2$.
    Since $\N(u_1)=1$, ${\bf y}_i^{\ast}{\bf y}_i=1$, and $u_2\neq 0$, it follows that $n\geq 3$.
     Since $q\geq 3$ there is $v_1\in\FFq2$ such that $\N(v_1)\notin\{1,\N(u_1)+\N(u_2)\}$ and $v_1\neq -(\inv{u}_1)^{-1}\cdot\sum_{i=3}^n \N(u_i)$. By (\ref{i3}) there are $q+1>2$ scalars $v_2\in\FFq2$ such that
    \begin{equation}\label{eq27}
    \N(v_2)=\N(u_1)+\N(u_2)-\N(v_1).
    \end{equation}
    Since $v_1\inv{u}_1+\sum_{i=3}^n \N(u_i)\neq 0\neq \inv{u}_2$, we can apply
    Lemma~\ref{obseg_trace} and choose $v_2$ with
     \begin{equation}\label{eq28}
    \N\left(\big(v_1\inv{u}_1+\sum_{i=3}^n \N(u_i)\big)+v_2\inv{u}_2\right)\neq 1.
    \end{equation}
    Let ${\bf z}:=(v_1,v_2,u_3,\ldots,u_n)^{\tr}$. Then (\ref{eq27})-(\ref{eq28}) imply that ${\bf z}^{\ast}{\bf z}=1$ and ${\bf z}^{\ast}{\bf y}_i{\bf y}_i^{\ast}{\bf z}\neq 1$. Since ${\bf y}_i^{\ast}{\bf y}_i=1$, we can find a walk in $\Gamma$ that connects $A_i$ with $I+(\lambda -1){\bf z}{\bf z}^{\ast}$ by a similar procedure as in Cases 1 and 2.   Since ${\bf z}^{\ast}{\bf e}_1{\bf e}_1^{\ast}{\bf z}=\N(v_1)\neq 1$, we similarly see  that there is a walk in $\Gamma$ from $I+(\lambda -1){\bf z}{\bf z}^{\ast}$ to $C$, so there is a walk in $\Gamma$ from $A_i$ to $C$. This ends the proof of Step~1.

\item If $\det A_1=\lambda =\det A_2$ and $B\in\hgl$ is adjacent to $A_1$ and $A_2$, then there is a walk in $\Gamma$ that connects $A_1$ with $A_2$.

    If $\det B=\lambda$, we are done. So let $\det B\neq \lambda$.
    Pick an invertible $P$ such that $B=PP^{\ast}$. Then $A_i=P(I+{\bf x}_i{\bf x}_i^{\ast})P^{\ast}$ for some column vector ${\bf x}_i$. By Corollary~\ref{handbook} and the multiplicativity of the determinant function, we deduce that $1+{\bf x}_i^{\ast}{\bf x}_i=\det(I+{\bf x}_i{\bf x}_i^{\ast})=\frac{\lambda}{\det{B}}$, so ${\bf x}_i^{\ast}{\bf x}_i=\frac{\lambda}{\det{B}}-1\neq 0$.
    Let ${\bf y}_i:=c{\bf x}_i$, where $c\in\FFq2$ satisfies  $\N(c)=(\frac{\lambda}{\det{B}}-1)^{-1}$. Then ${\bf y}_i^{\ast}{\bf y}_i=1$ and $I+{\bf x}_i{\bf x}_i^{\ast}=I+(\frac{\lambda}{\det{B}}-1){\bf y}_i{\bf y}_i^{\ast}$. By Step~1, there is a walk between $I+(\frac{\lambda}{\det{B}}-1){\bf y}_1{\bf y}_1^{\ast}$ and $I+(\frac{\lambda}{\det{B}}-1){\bf y}_2{\bf y}_2^{\ast}$ in the graph that is induced by $\det^{-1} (\frac{\lambda}{\det{B}})$. If this walk is premultiplied by $P$ and postmultiplied by $P^{\ast}$, we obtain a walk between $A_1$ and $A_2$ that lies in~$\Gamma$.

\item If $\det A_1=\lambda =\det A_2$, then there is a walk in $\Gamma$ between $A_1$ and $A_2$.

     By Step~2, it suffices to construct matrices $B_0,B_1,\ldots,B_{2m}$ such that
     \begin{equation}
     \label{eq30} B_0=A_1,\ B_{2m}=A_2,\ \rank(B_{i+1}-B_i)\leq 1,\ \det B_{2i}=\lambda.
     \end{equation}
     for all $i$. Write $A_1=\sum_{i=1}^n{\bf x}_i{\bf x}_i^{\ast}$ and $A_2=\sum_{i=1}^n{\bf y}_i{\bf y}_i^{\ast}$ for suitable column vectors ${\bf x}_i$ and ${\bf y}_i$. Obviously, both ${\bf x}_1,\ldots,{\bf x}_n$ and ${\bf y}_1,\ldots,{\bf y}_n$ form a basis in~$\FFq2^n$.
     Let $P$ be the invertible matrix with ${\bf x}_i$ as the $i$-th column, so $A_1=PP^{\ast}$.
     Let $B_0:=A_1$. Since $q\geq 4$, Corollary~\ref{handbook} shows that there are two distinct nonzero $\mu_1,\mu_2\in\FF$ such that $B_0+\mu_i {\bf y}_1{\bf y}_1^{\ast}$ is invertible. In the set $\{{\bf x}_1,\ldots,{\bf x}_n\}$ choose $n-1$ vectors ${\bf x}_{i_1},\ldots,{\bf x}_{i_{n-1}}$ such that $${\bf x}_{i_1},\ldots,{\bf x}_{i_{n-1}},{\bf y}_1\
     \textrm{is a basis}$$ in $\FFq2^n$. Since ${\bf x}_{i_1}^{\ast}B_0^{-1}{\bf x}_{i_1}={\bf e}_{i_1}^{\ast}P^{\ast}(PP^{\ast})^{-1}P{\bf e}_{i_1}=1$,  we can use Corollary~\ref{handbook} and choose $\alpha_1\in\{\mu_1,\mu_2\}$ such that
     \begin{align}
     \nonumber {\bf x}_{i_1}^{\ast}(B_0+\alpha_1 {\bf y}_1{\bf y}_1^{\ast})^{-1}{\bf x}_{i_1}&={\bf x}_{i_1}^{\ast}\left(B_0^{-1}-\frac{1}{\alpha_1^{-1}+{\bf y}_1^{\ast}B_0^{-1}{\bf y}_1} (B_0^{-1}{\bf y}_1)(B_0^{-1}{\bf y}_1)^{\ast}\right){\bf x}_{i_1}\\
     \label{eq29}&=1-\frac{{\bf x}_{i_1}^{\ast}B_0^{-1}{\bf y}_{1}\cdot \inv{{\bf x}_{i_1}^{\ast}B_0^{-1}{\bf y}_{1}}}{\alpha_1^{-1}+{\bf y}_1^{\ast}B_0^{-1}{\bf y}_1}\neq 0.
     \end{align}
     Let $$B_1:=B_0+\alpha_1 {\bf y}_1{\bf y}_1^{\ast}.$$ By (\ref{eq29}) and Corollary~\ref{handbook} we obtain $\eta_1\in\FF$ such that $$B_2:=B_1+(\eta_1-1){\bf x}_{i_1}{\bf x}_{i_1}^{\ast}=\eta_1 {\bf x}_{i_1}{\bf x}_{i_1}^{\ast}+\sum_{j\neq i_1}{\bf x}_{j}{\bf x}_{j}^{\ast}+ \alpha_1 {\bf y}_1{\bf y}_1^{\ast}$$
     has determinant $\lambda$. If $\eta_1\neq 0$, then we set
     $$B_3:=\eta_1 {\bf x}_{i_1}{\bf x}_{i_1}^{\ast}+\sum_{k=2}^{n-1}{\bf x}_{i_k}{\bf x}_{i_k}^{\ast}+ \alpha_1 {\bf y}_1{\bf y}_1^{\ast}.$$
     By Lemma~\ref{det-tenzor} we can replace $\eta_1$ by suitable nonzero $\nu_1$, such that
     $$B_4:=\nu_1 {\bf x}_{i_1}{\bf x}_{i_1}^{\ast}+\sum_{k=2}^{n-1}{\bf x}_{i_k}{\bf x}_{i_k}^{\ast}+ \alpha_1 {\bf y}_1{\bf y}_1^{\ast}$$
     has determinant $\lambda$. If $\eta_1=0$, we set $B_4=B_3:=B_2$. In both cases there are column vectors ${\bf z}_1,\ldots,{\bf z}_{n-1}$ such that $B_4=\sum_{i=1}^{n-1}{\bf z}_i{\bf z}_i^{\ast}+\alpha_1 {\bf y}_1{\bf y}_1^{\ast}$. Since $B_4$ is invertible, ${\bf z}_1,\ldots,{\bf z}_{n-1},{\bf y}_1$ form a basis in $\FFq2^n$. We now repeat the procedure, that is, we define $B_5,B_6,B_7,B_8$ similarly as we have defined $B_1,B_2,B_3,B_4$. More precisely, we first choose $n-2$ vectors ${\bf z}_{j_1},\ldots,{\bf z}_{j_{n-2}}$ in the set $\{{\bf z}_1,\ldots,{\bf z}_{n-1}\}$ such that ${\bf z}_{j_1},\ldots,{\bf z}_{j_{n-2}},{\bf y}_1,{\bf y}_2$ is a basis in $\FFq2^n$. Then we obtain a nonzero~$\alpha_2$ such that $B_5:=B_4+\alpha_2 {\bf y}_2{\bf y}_2^{\ast}$ is invertible with ${\bf z}_{j_1}^{\ast}(B_4+\alpha_2 {\bf y}_2{\bf y}_2^{\ast})^{-1}{\bf z}_{j_1}\neq 0$. Then we pick $\eta_2\in\FF$ such that $B_6:=\eta_2 {\bf z}_{j_1}{\bf z}_{j_1}^{\ast}+\sum_{k\neq j_1}{\bf z}_{k}{\bf z}_{k}^{\ast}+ \alpha_1 {\bf y}_1{\bf y}_1^{\ast}+ \alpha_2 {\bf y}_2{\bf y}_2^{\ast}$ has determinant $\lambda$. Matrices $B_7,B_8$ are defined similarly as $B_3,B_4$, so $B_8=\sum_{i=1}^{n-2}{\bf w}_i{\bf w}_i^{\ast}+\alpha_1 {\bf y}_1{\bf y}_1^{\ast}+\alpha_2 {\bf y}_2{\bf y}_2^{\ast}$ for suitable column vectors ${\bf w}_1,\ldots,{\bf w}_{n-2}$. Moreover $\det B_8=\lambda$. We proceed with this four step procedure till we construct matrix $B_{4t}=\sum_{i=1}^{n}\alpha_i {\bf y}_i{\bf y}_i^{\ast}$. Since $\det B_{4t}=\det A_1=\det A_2$, Lemma~\ref{det-tenzor} shows that $\prod_{i=1}^n\alpha_i=1$. We further define
     \begin{align*}
     B_{4t+1}&:={\bf y}_1{\bf y}_1^{\ast}+\sum_{i=2}^{n}\alpha_i {\bf y}_i{\bf y}_i^{\ast}\\
     B_{4t+2}&:={\bf y}_1{\bf y}_1^{\ast}+\alpha_1\alpha_2{\bf y}_2{\bf y}_2^{\ast}+\sum_{i=3}^{n}\alpha_i {\bf y}_i{\bf y}_i^{\ast}\\
     B_{4t+3}&:={\bf y}_1{\bf y}_1^{\ast}+{\bf y}_2{\bf y}_2^{\ast}+\sum_{i=3}^{n}\alpha_i {\bf y}_i{\bf y}_i^{\ast}\\
     B_{4t+4}&:={\bf y}_1{\bf y}_1^{\ast}+{\bf y}_2{\bf y}_2^{\ast}+\alpha_1\alpha_2\alpha_3{\bf y}_3{\bf y}_3^{\ast}+\sum_{i=4}^{n}\alpha_i {\bf y}_i{\bf y}_i^{\ast}\\
     &\vdots\\
     B_{2m}&={\bf y}_1{\bf y}_1^{\ast}+\ldots+{\bf y}_{n-1}{\bf y}_{n-1}^{\ast}+\alpha_1\alpha_2\cdots\alpha_n{\bf y}_n{\bf y}_n^{\ast}=A_2.
     \end{align*}
     By Lemma~\ref{det-tenzor}, all conditions in~(\ref{eq30}) are satisfied, so the proof ends.\qedhere
\end{myenumerate}
\end{proof}

\begin{cor}\label{povezanost}
Let $q\geq 4$. The graph $\hgl$ is connected.
\end{cor}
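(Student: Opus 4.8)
The plan is to reduce connectedness of $\hgl$ to Proposition~\ref{lemma-povez-det} together with a single clique that visits every determinant value. The determinant restricts to a map $\det\colon\hgl\to\FF\setminus\{0\}$ (recall $\det A\in\FF$ for every hermitian $A$, and $A\in\hgl$ forces $\det A\neq 0$), so the vertex set of $\hgl$ is partitioned into the level sets $\det^{-1}(\lambda)$, $\lambda\in\FF\setminus\{0\}$. By Proposition~\ref{lemma-povez-det} each of these level sets induces a connected subgraph of $\hgl$. Hence it suffices to exhibit, for every pair $\lambda,\mu\in\FF\setminus\{0\}$, a walk in $\hgl$ joining some matrix of determinant $\lambda$ to some matrix of determinant~$\mu$.

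Such a walk will be supplied by one maximal clique. I would consider the matrices $C_\nu:=I+(\nu-1){\bf e}_1{\bf e}_1^{\ast}$ for $\nu\in\FF$. Since ${\bf e}_1^{\ast}I^{-1}{\bf e}_1=1\neq 0$, Corollary~\ref{handbook} shows at once that $C_\nu$ is invertible precisely when $\nu\neq 0$, that $\det C_\nu=\nu$, and that the $q-1$ matrices $\{C_\nu:\nu\in\FF\setminus\{0\}\}$ are pairwise adjacent; they form the $(q-1)$-clique sitting inside the maximal clique $\{I+\lambda{\bf e}_1{\bf e}_1^{\ast}:\lambda\in\FF\}$ (cf.~\eqref{i5}). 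In particular this clique contains exactly one vertex $C_\nu$ of each prescribed nonzero determinant $\nu$.

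Finally, I would take arbitrary $X,Y\in\hgl$ and set $\lambda:=\det X$ and $\mu:=\det Y$. By Proposition~\ref{lemma-povez-det} there is a walk in $\det^{-1}(\lambda)$ from $X$ to $C_\lambda$ and a walk in $\det^{-1}(\mu)$ from $Y$ to $C_\mu$; since $C_\lambda$ and $C_\mu$ lie in the same clique, they are equal or adjacent. Concatenating these three pieces yields a walk in $\hgl$ from $X$ to $Y$, proving the corollary.

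There is no real obstacle beyond Proposition~\ref{lemma-povez-det}, which carries all the weight; the only point to verify is that some $(q-1)$-clique meets every determinant level, and the direction ${\bf e}_1{\bf e}_1^{\ast}$ at the identity does this by a one-line application of Corollary~\ref{handbook}. (One could equally start from any $A\in\hgl$ and any ${\bf x}$ with ${\bf x}^{\ast}A^{-1}{\bf x}\neq 0$, such an ${\bf x}$ existing because ${\bf x}^{\ast}B{\bf x}$ cannot vanish identically for a nonzero hermitian $B$.)
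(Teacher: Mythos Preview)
Your proof is correct and follows essentially the same approach as the paper: both observe that each determinant level $\det^{-1}(\lambda)$ is connected by Proposition~\ref{lemma-povez-det}, and then use a $(q-1)$-clique to bridge the levels, since the determinant on a $(q-1)$-clique attains every value in $\FF\setminus\{0\}$. The only difference is that you exhibit a concrete $(q-1)$-clique $\{I+(\nu-1){\bf e}_1{\bf e}_1^{\ast}:\nu\neq 0\}$, whereas the paper simply invokes the general fact (already noted in Section~2 via Corollary~\ref{handbook}) that every $(q-1)$-clique has this property.
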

\begin{proof}
The determinant function on any $(q-1)$-clique attains all values from $\FF\backslash\{0\}$. Hence the connectedness of $\hgl$ follows from Proposition~\ref{lemma-povez-det}.
\end{proof}

\begin{lemma}\label{lemma-q-(q-1)}
Let $A\in \hgl$. The number of $q$-cliques and $(q-1)$-cliques that contain $A$ equal $|V_{\inv{A^{-1}}}|$ and $\frac{q^{2n}-1}{q^2-1}-|V_{\inv{A^{-1}}}|$ respectively. A vertex in $\hgl$ has $$|V_{\inv{A^{-1}}}|\cdot (q-1)+\left(\frac{q^{2n}-1}{q^2-1}-|V_{\inv{A^{-1}}}|\right)\cdot (q-2)$$
neighbors. If $q\geq 3$, then the number of $(q-1)$-cliques that contain $A$ is strictly bigger than the number of $q$-cliques that contain $A$.
\end{lemma}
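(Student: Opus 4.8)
The plan is to enumerate the maximal cliques through a fixed $A\in\hgl$ by identifying them with the points of $PG(n-1,q^2)$, and then to use Corollary~\ref{handbook}(ii) to decide, point by point, whether the associated clique has $q$ or $q-1$ vertices.

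First I would record that every maximal clique of $\hgl$ containing $A$ is of the form $\{A+\lambda{\bf x}{\bf x}^{\ast}:\lambda\in\FF\}$ for some nonzero ${\bf x}\in\FFq2^n$: by~(\ref{i8}) any other member $B$ of such a clique satisfies $B-A={\bf x}{\bf x}^{\ast}$ for a suitable ${\bf x}$, and then~(\ref{i5}) gives the claimed description. Replacing ${\bf x}$ by $c{\bf x}$ merely rescales $\lambda{\bf x}{\bf x}^{\ast}$ by $\N(c)\in\FF\setminus\{0\}$, so the clique depends only on the point $\langle{\bf x}\rangle$; and distinct points $\langle{\bf x}\rangle\neq\langle{\bf y}\rangle$ give distinct cliques, since otherwise $\lambda{\bf x}{\bf x}^{\ast}=\mu{\bf y}{\bf y}^{\ast}$ for some nonzero $\lambda,\mu$, forcing the same column space. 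Hence the maximal cliques through $A$ are in bijection with the $\frac{q^{2n}-1}{q^2-1}$ points of $PG(n-1,q^2)$. Next, since $A^{-1}$ is hermitian we have ${\bf x}^{\ast}A^{-1}{\bf x}\in\FF$, and Corollary~\ref{handbook}(ii) says $A+\lambda{\bf x}{\bf x}^{\ast}$ is invertible iff $\lambda{\bf x}^{\ast}A^{-1}{\bf x}\neq-1$. Thus if ${\bf x}^{\ast}A^{-1}{\bf x}=0$ all $q$ members of the clique are invertible (a $q$-clique), while if ${\bf x}^{\ast}A^{-1}{\bf x}\neq 0$ exactly the member with $\lambda=-({\bf x}^{\ast}A^{-1}{\bf x})^{-1}$ is singular (a $(q-1)$-clique). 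By~(\ref{i12}) applied to $A^{-1}$, the condition ${\bf x}^{\ast}A^{-1}{\bf x}=0$ is equivalent to $\langle{\bf x}\rangle\in V_{\inv{A^{-1}}}$, which yields the counts $|V_{\inv{A^{-1}}}|$ and $\frac{q^{2n}-1}{q^2-1}-|V_{\inv{A^{-1}}}|$ for the two types. Because any edge of $\hgl$ lies in a unique maximal clique, the neighbours of $A$ are partitioned among the maximal cliques through $A$, a clique of size $m$ contributing $m-1$ neighbours; summing over the two types gives the displayed degree formula.

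For the final assertion I would invoke Theorem~\ref{kardinalnost}: the matrix $\inv{A^{-1}}$ has rank $n$, so $|V_{\inv{A^{-1}}}|=\frac{q^{2n-1}+(-1)^n(q-1)q^{n-1}-1}{q^2-1}$, and it suffices to verify $|V_{\inv{A^{-1}}}|<\tfrac12\cdot\frac{q^{2n}-1}{q^2-1}$. Clearing the common positive denominator, this amounts to $q^{2n}-2q^{2n-1}-2(-1)^n(q-1)q^{n-1}+1>0$; for $q\geq 3$ and $n\geq 2$ the leading difference $q^{2n}-2q^{2n-1}=q^{2n-1}(q-2)\geq q^{2n-1}$ already dominates $2(q-1)q^{n-1}$ (equivalently $q^n>2q-2$), which is an elementary inequality. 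I do not anticipate a genuine obstacle here; the only points needing care are establishing the clean bijection between maximal cliques through $A$ and projective points, and checking the last inequality in the smallest case $q=3$, $n=2$.
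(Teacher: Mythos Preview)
Your proposal is correct and follows exactly the route the paper takes: the paper's proof is a one-line reference to~(\ref{i5}), Corollary~\ref{handbook}, (\ref{i12}), and Theorem~\ref{kardinalnost}, and you have simply unpacked those references in the natural order. The only part you add beyond the paper is the explicit verification of the inequality $|V_{\inv{A^{-1}}}|<\tfrac12\cdot\frac{q^{2n}-1}{q^2-1}$ via Theorem~\ref{kardinalnost}, which the paper leaves implicit; your estimate $q^{2n-1}(q-2)\geq q^{2n-1}>2(q-1)q^{n-1}$ for $q\geq 3$, $n\geq 2$ is sound.
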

\begin{proof}
The result follows from~(\ref{i5}), Corollary~\ref{handbook},~(\ref{i12}), and Theorem~\ref{kardinalnost}.
\end{proof}

\begin{lemma}\label{klika-kromaticno}
The core of $\hgl$ has chromatic number strictly bigger than~$q$. In particular, the core of $\hgl$ is not complete.
\end{lemma}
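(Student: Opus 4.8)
The plan is to reduce to the case $n=2$ and then run a spectral argument on $\hgldva$, exploiting its relation to the strongly regular graph ${\cal H}_2(\FFq2)$ recalled above.

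\emph{Reductions.} Let $\Gamma'$ be the core of $\hgl$. Since there is a homomorphism $\hgl\to\Gamma'$, Lemma~\ref{lema-kromaticno} gives $\chi(\Gamma')\ge\chi(\hgl)$; and since every maximal clique of $\hgl$ has at most $q$ vertices (cf.~Section~2) while $\Gamma'$ is, up to isomorphism, a subgraph of $\hgl$, we have $\omega(\Gamma')\le q$. Hence it suffices to prove $\chi(\hgl)>q$: this already forces $\chi(\Gamma')>q$, and if $\Gamma'$ were complete it would be some $K_c$ with simultaneously $c=\chi(\Gamma')>q$ and $c=\omega(\Gamma')\le q$, a contradiction. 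For the second reduction, the map $M\mapsto M\oplus I_{n-2}$ is a graph homomorphism $\hgldva\to\hgl$ for every $n\ge 2$ (the image is invertible and the rank of a difference is unchanged), so $\chi(\hgldva)\le\chi(\hgl)$ by Lemma~\ref{lema-kromaticno}. Thus everything reduces to showing $\chi(\hgldva)>q$, and from now on $n=2$.

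\emph{Spectral set-up.} First I would observe that $\hgldva$ is an induced subgraph of ${\cal H}_2(\FFq2)$ on $t':=q^4-q^3+q^2-q$ vertices: one subtracts from $q^4$ the number $1+(q^2+1)(q-1)=q^3-q^2+q$ of singular hermitian $2\times 2$ matrices (the zero matrix, together with $q-1$ rank-one matrices for each of the $q^2+1$ projective points). By Lemma~\ref{lemma-q-(q-1)} combined with Theorem~\ref{kardinalnost} applied to $\overline{A^{-1}}$ (which has rank $2$, so $|V_{\overline{A^{-1}}}|=q+1$), the graph $\hgldva$ is regular of degree $d:=q^3-2q^2+2q-1$, hence its largest adjacency eigenvalue is $\mu_1=d$ by~(\ref{i11}). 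Write the eigenvalues of $\hgldva$ as $\mu_1\ge\cdots\ge\mu_{t'}$ and those of ${\cal H}_2(\FFq2)$ as $\lambda_1\ge\cdots\ge\lambda_{q^4}$; from the data recalled above, $\lambda_1=k:=q^3-q^2+q-1$, $\lambda_j=q-1$ for $2\le j\le q^4-k$, and $\lambda_j=s:=-q^2+q-1$ for $q^4-k<j\le q^4$. The number of deleted vertices is $m:=q^4-t'=q^3-q^2+q=k+1$, so iterating the interlacing relation~(\ref{i7}) gives $\lambda_j\ge\mu_j\ge\lambda_{j+m}$ for $1\le j\le t'$. For $2\le j\le q^4-2k-1$ both $\lambda_j$ and $\lambda_{j+m}$ equal $q-1$, so $\mu_j=q-1$ there; and $\mu_j\ge\lambda_{q^4}=s$ for every $j$.

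\emph{The contradiction.} Assume $c:=\chi(\hgldva)\le q$; then $c\ge 2$ (there are edges) and $t'>c$, so Theorem~\ref{haemers} yields $\mu_2+\cdots+\mu_c+\mu_{t'-c+1}\ge 0$. Since $q<q^4-2k-1$, the previous step gives $\mu_2=\cdots=\mu_c=q-1$, whence $\mu_{t'-c+1}\ge-(c-1)(q-1)\ge-(q-1)^2$ and therefore $\mu_j\ge-(q-1)^2$ for all $j\le t'-q+1$. Now I would bound the trace identity $\sum_{j=1}^{t'}\mu_j=0$ (cf.~(\ref{i10})) from below by splitting $\{1,\dots,t'\}$ into the consecutive blocks $\{1\}$, $\{2,\dots,q^4-2k-1\}$, $\{q^4-2k,\dots,t'-q+1\}$, $\{t'-q+2,\dots,t'\}$, and using $\mu_1=d$, the exact value $q-1$ on the second block ($q^4-2k-2$ terms), the bound $-(q-1)^2$ on the third block ($q^3-q^2$ terms), and the bound $s$ on the fourth block ($q-1$ terms). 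A routine simplification shows this lower bound equals $q(q-1)(q-2)$, so $0\ge q(q-1)(q-2)$, which is false for $q\ge 3$. Hence $\chi(\hgldva)\ge q+1$, and by the reductions the lemma follows.

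\emph{Main obstacle.} The delicate point is that the straightforward ratio (Hoffman) bound only produces $\chi(\hgldva)\ge q$; the extra unit has to be extracted by using that $\hgldva$ differs from the strongly regular graph ${\cal H}_2(\FFq2)$ by only $k+1$ vertices, so that interlacing pins almost every eigenvalue of $\hgldva$ to be exactly $q-1$, and then balancing this against the trace-zero identity and Haemers' inertia-type inequality. Getting the four block sizes right and checking that the final polynomial collapses precisely to $q(q-1)(q-2)$ is where the computation must be carried out with care.
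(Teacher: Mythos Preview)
Your argument is correct for $q\ge 3$ and follows essentially the paper's strategy: reduce to $n=2$, read off the spectrum of ${\cal H}_2(\FFq2)$, pass to $\hgldva$ by interlacing, and combine the trace identity~(\ref{i10}) with Haemers' inequality (Theorem~\ref{haemers}). The organizational difference is that the paper deletes only the $k$ rank-one matrices and then observes that the zero matrix is isolated, which pins $q^4-2k-1$ eigenvalues of $\hgldva$ exactly to $q-1$ (one more than your direct interlacing after removing all $m=k+1$ singular matrices); with this extra eigenvalue in hand the paper runs a two-case split on $\eta_{k-q}$, whereas you fold the Haemers bound into a single trace estimate.

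The only genuine gap is $q=2$: your final inequality $0\ge q(q-1)(q-2)$ gives no contradiction there, while the paper's slightly sharper count does (its Case~1 yields $-q(q-1)^2\ge 0$). Since the standing convention is $q\ge 2$, you should either adopt the isolated-vertex trick to recover that one extra eigenvalue equal to $q-1$, or simply note that for $q=2$ the graph $\hgldva$ is the Petersen graph, whose chromatic number is $3>2$.
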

\begin{proof}
Let $\Gamma'$ be a core of $\hgl$. Assume that $\chi(\Gamma')\leq q$. If $I_{n-2}$ denotes the $(n-2)\times (n-2)$ identity matrix, then the map $A\mapsto A\oplus I_{n-2}$ is a homomorphism from $\hgldva$ to $\hgl$, so its composition with any homomorphism $\Phi: \hgl\to \Gamma'$ forms a homomorphism from $\hgldva$ to $\Gamma'$. Hence, $\chi(\hgldva)= q$ by Lemma~\ref{lema-kromaticno} and the fact that $\hgldva$ contains $q$-cliques. Recall from Section~2 that the eigenvalues of graph ${\cal H}_2(\FFq2)$, that is formed by all $2\times 2$ hermitian matrices,  equal
\begin{equation}\label{i9}
q^3-q^2+q-1\geq \underbrace{q-1\geq \ldots \geq q-1}_{q^4-k-1}\geq \underbrace{-q^2+q-1\geq \ldots \geq -q^2+q-1}_{k},
\end{equation}
where $k=q^3-q^2+q-1$ is the number of all rank--one matrices. If we delete them one by one, the interlacing~(\ref{i7}) implies that the induced subgraph formed by invertible matrices and the zero matrix has eigenvalues
\begin{equation}\label{i13}
\lambda\geq \underbrace{q-1\geq \ldots \geq q-1}_{q^4-2k-1}\geq \nu_1\geq \nu_2 \ldots \geq \nu_k,
\end{equation}
where $\nu_i\geq -q^2+q-1$ for all $i$. To obtain $\hgldva$ we still need to delete the zero matrix.
However, the zero matrix is not adjacent to rank-two matrices, so eigenvalues~(\ref{i13}) are precisely the eigenvalues of $\hgldva$ together with the value~$0$. Consequently, the eigenvalues of $\hgldva$ are
\begin{equation*}
\lambda\geq \underbrace{q-1\geq \ldots \geq q-1}_{q^4-2k-1}\geq \eta_1\geq\ldots \geq \eta_{k-1},
\end{equation*}
with unknown $\eta_1,\ldots,\eta_{k-1}\geq -q^2+q-1$, while (\ref{i11}), Lemma~\ref{lemma-q-(q-1)}, and Theorem~\ref{kardinalnost} imply that $\lambda=q^3-2q^2+2q-1$. By~(\ref{i10}),
\begin{equation}\label{i14}
\sum_{i=1}^{k-1}\eta_i=-(q^3-2q^2+2q-1)-(q-1)(q^4-2k-1)=-q^5+3q^4-5q^3+6q^2-5q+2.
\end{equation}
We separate two cases and note that $k-q\geq 1$.
\begin{myenumerate}{Case}
\item Assume that $\eta_{k-q}\geq -q^2+2q-1$. Then $\eta_1,\ldots,\eta_{k-q}\geq -q^2+2q-1$ and $\eta_{k-q+1},\ldots,\eta_{k-1}\geq -q^2+q-1$, so~(\ref{i14}) implies that
    \begin{align*}-q^5+3q^4-5q^3+6q^2-5q+2&\geq (k-q)(-q^2+2q-1)+(q-1)(-q^2+q-1)\\
    &=-q^5+3q^4-4q^3+4q^2-4q+2,
    \end{align*}
    which simplifies into $-q(q-1)^2\geq 0$, a contradiction.
\item Assume that $\eta_{k-q}<-q^2+2q-1$. In Theorem~\ref{haemers} applied at $\hgldva$ we have
$\lambda_2=\ldots=\lambda_{\chi(\hgldva)}=q-1$ and $\lambda_{t-\chi(\hgldva)+1}=\eta_{k-q}$. Consequently,
    $$0\leq (q-1)(q-1)+\eta_{k-q}<(q-1)(q-1)+(-q^2+2q-1)=0,$$ a contradiction. Hence, $\chi(\Gamma')>q$.\qedhere
\end{myenumerate}
\end{proof}

Next four lemmas study some transitivity properties of maps $A\mapsto PAP^{\ast}$.
\begin{lemma}\label{pomozna}
Suppose that nonzero vectors ${\bf x},{\bf y}\in\FFq2^n$ satisfy ${\bf x}^{\ast}{\bf x}=0={\bf y}^{\ast}{\bf y}$. Then there exists an invertible matrix $P$ such that $P^{\ast}P=I$ and $P{\bf x}={\bf y}$.
\end{lemma}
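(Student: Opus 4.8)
The plan is to realize $P$ as an isometry of the standard hermitian form, i.e. to show that the unitary group $\{P:P^{\ast}P=I\}$ acts transitively on the set of nonzero isotropic vectors of $\FFq2^n$. Since this group is closed under products and inverses, it suffices, for the given $\bf x$ and $\bf y$, to build a single unitary $P$ directly; the idea is to extend both $\bf x$ and $\bf y$ to a ``hyperbolic decomposition'' of $\FFq2^n$ and to send the one decomposition to the other.

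First I would produce a hyperbolic partner for $\bf x$, namely a vector $\bf w$ with ${\bf w}^{\ast}{\bf w}=0$ and ${\bf x}^{\ast}{\bf w}=1$. Since ${\bf x}\neq 0$, there is ${\bf z}\in\FFq2^n$ with ${\bf x}^{\ast}{\bf z}\neq 0$, and after rescaling we may assume ${\bf x}^{\ast}{\bf z}=1$. Then ${\bf z}^{\ast}{\bf z}\in\FF$, and since $\Tr:\FFq2\to\FF$ is surjective by~(\ref{i1}), we can pick $\mu\in\FFq2$ with $\Tr(\mu)=-{\bf z}^{\ast}{\bf z}$; the vector ${\bf w}:={\bf z}+\mu{\bf x}$ then satisfies ${\bf x}^{\ast}{\bf w}={\bf x}^{\ast}{\bf z}=1$ and ${\bf w}^{\ast}{\bf w}={\bf z}^{\ast}{\bf z}+\Tr(\mu)=0$. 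The same recipe gives ${\bf w}'$ with ${\bf w}'^{\ast}{\bf w}'=0$ and ${\bf y}^{\ast}{\bf w}'=1$. Set $W:=\langle{\bf x},{\bf w}\rangle$ and $W':=\langle{\bf y},{\bf w}'\rangle$. With respect to these spanning pairs both carry the Gram matrix $\left[\begin{smallmatrix}0&1\\1&0\end{smallmatrix}\right]$, hence are $2$-dimensional and nondegenerate, and therefore $\FFq2^n=W\oplus W^{\perp}=W'\oplus W'^{\perp}$ are orthogonal decompositions with $W^{\perp}$ and $W'^{\perp}$ nondegenerate of dimension $n-2$.

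Next I would fix orthonormal bases of the complements. A nondegenerate hermitian form on an $(n-2)$-dimensional space admits an orthonormal basis, since its Gram matrix in any basis is hermitian and invertible, hence $\ast$-congruent to $I_{n-2}$ by~(\ref{i8}). Choose orthonormal bases ${\bf a}_1,\ldots,{\bf a}_{n-2}$ of $W^{\perp}$ and ${\bf b}_1,\ldots,{\bf b}_{n-2}$ of $W'^{\perp}$ (this step is vacuous when $n=2$). Since $W\cap W^{\perp}=\{0\}$, the lists ${\bf x},{\bf w},{\bf a}_1,\ldots,{\bf a}_{n-2}$ and ${\bf y},{\bf w}',{\bf b}_1,\ldots,{\bf b}_{n-2}$ are bases of $\FFq2^n$, so there is an invertible $P$ mapping the first onto the second in this order; in particular $P{\bf x}={\bf y}$. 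Comparing the two Gram matrices entry by entry — the $W$-block and the $W'$-block are both $\left[\begin{smallmatrix}0&1\\1&0\end{smallmatrix}\right]$, the complement blocks are both $I_{n-2}$, and all cross terms vanish by orthogonality of the decompositions — shows $(P{\bf u})^{\ast}(P{\bf v})={\bf u}^{\ast}{\bf v}$ for all pairs of basis vectors, hence for all ${\bf u},{\bf v}\in\FFq2^n$. Thus ${\bf u}^{\ast}(P^{\ast}P){\bf v}={\bf u}^{\ast}{\bf v}$ for all ${\bf u},{\bf v}$, i.e. $P^{\ast}P=I$, as required.

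I expect the only step needing genuine care is the construction of the hyperbolic partner $\bf w$, which is precisely where the arithmetic of $\FFq2$ (surjectivity of the trace) enters; the rest is routine bookkeeping with orthogonal direct sums together with the congruence normal form~(\ref{i8}). Alternatively one could deduce the statement from Witt's extension theorem applied to the isometry ${\bf x}\mapsto{\bf y}$ between the one-dimensional (totally isotropic) subspaces $\langle{\bf x}\rangle$ and $\langle{\bf y}\rangle$, but the construction above keeps the argument self-contained.
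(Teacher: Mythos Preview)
Your proof is correct and takes a genuinely different route from the paper's. The paper fixes a standard isotropic vector $(1,a,0,\ldots,0)^{\tr}$ with $\N(a)=-1$, reduces to the case ${\bf x}=(1,a,0,\ldots,0)^{\tr}$ by transitivity, and then constructs by a three-case analysis on the coordinates of ${\bf y}$ an auxiliary vector ${\bf z}$ with ${\bf z}^{\ast}{\bf z}=1$ and ${\bf z}^{\ast}{\bf y}=a$; this makes ${\bf y}-a{\bf z}$ and ${\bf z}$ orthonormal, and Proposition~\ref{ortonormalnost} finishes the job. Your argument instead builds a hyperbolic partner ${\bf w}$ for ${\bf x}$ (and ${\bf w}'$ for ${\bf y}$) via the surjectivity of $\Tr$, obtains the orthogonal splittings $\FFq2^n=\langle{\bf x},{\bf w}\rangle\oplus\langle{\bf x},{\bf w}\rangle^{\perp}$ and likewise for ${\bf y}$, and matches the pieces using~(\ref{i8}) on the complements. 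Your approach is more conceptual and avoids the coordinate case analysis entirely---it is essentially an explicit instance of Witt extension, as you note---while the paper's approach stays closer to the orthonormal-basis machinery it has already set up (Proposition~\ref{ortonormalnost}) and will reuse in Lemmas~\ref{4x4} and~\ref{nova}. Both reach the same conclusion with comparable effort; the paper's version is slightly more self-contained given its toolkit, yours is cleaner structurally.
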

\begin{proof}
Pick $a\in\FFq2$ such that $\N(a)=-1$. It suffices to prove the conclusion for ${\bf x}=(1,a,0,\ldots,0)^{\tr}$. In fact, then there are matrices $P_1, P_2$ with $P_1(1,a,0,\ldots,0)^{\tr}={\bf y}$, $P_2(1,a,0,\ldots,0)^{\tr}={\bf x}$, and $P_i^{\ast}P_i=I$, so $P:=P_1P_2^{-1}$ solves the original problem.

So let ${\bf x}=(1,a,0,\ldots,0)^{\tr}$. We first show that there exists a vector ${\bf z}$ such that
\begin{equation}\label{eq24}
{\bf z}^{\ast}{\bf z}=1\quad \textrm{and} \quad {\bf z}^{\ast}{\bf y}=a.
\end{equation}
We consider three cases based on coordinates of ${\bf y}=(y_1,\ldots,y_n)^{\tr}$.
\begin{myenumerate}{Case}
\item Assume that $y_k=0$ for some $k$.

Then we pick an arbitrary vector ${\bf z}$ that satisfy ${\bf z}^{\ast}{\bf y}=a$ and change its $k$-th coordinate to achieve ${\bf z}^{\ast}{\bf z}=1$.

\item  Assume that  $y_k\neq 0$ for all $k$ and $\N(y_i)+\N(y_j)=0$ for some $i\neq j$.

Then we define ${\bf z}=(z_1,\ldots,z_n)^{\tr}$ such that $z_k=0$ for all $k\neq i,j$, coordinate~$z_j$ is chosen in a way to satisfy equation $ \Tr(z_j a \inv{y}_j)=\N(y_j)-1$, while $z_i=(\inv{a}-z_j\inv{y}_j)(\inv{y}_i)^{-1}$. A straightforward calculation shows that~(\ref{eq24}) is satisfied.

\item  Assume that  $y_k\neq 0$ for all $k$ and  $\N(y_i)+\N(y_j)\neq 0$ for all $i\neq j$.

Then $\frac{\N(y_2)}{\N(y_1)}+1\neq 0$ and the assumption ${\bf y}^{\ast}{\bf y}=0$ implies that $n\geq 3$. Denote ${\bf v}=(y_3,\ldots,y_n)^{\tr}$. Choose any ${\bf u}\in\FFq2^{n-2}$ such that ${\bf u}^{\ast}{\bf v}=a$. Pick $z_2$ such that $$\N(z_2)=\frac{1-{\bf u}^{\ast}{\bf u}}{\frac{\N(y_2)}{\N(y_1)}+1}\quad \textrm{and define}\quad z_1=-z_2\inv{\left(\frac{y_2}{y_1}\right)}.$$ A straightforward calculation shows that~(\ref{eq24}) is satisfied for ${\bf z}=(z_1,z_2,{\bf u}^{\tr})^{\tr}$.
\end{myenumerate}
So let ${\bf z}$ be as in~(\ref{eq24}). Then ${\bf x}_1:={\bf y}-a{\bf z}$ and ${\bf x}_2:={\bf z}$ are orthonormal. By Propositions~\ref{ortonormalnost}, we can enlarge  $\{{\bf x}_1,{\bf x}_2\}$ to an orthonormal basis $\{{\bf x}_1,\ldots,{\bf x}_n\}$. The matrix $P$, with ${\bf x}_i$ as the $i$-th column, has the required properties.
\end{proof}

\begin{lemma}\label{4x4}
Let $n\geq 4$. For $i=1,2$ assume that vectors ${\bf x}_i,{\bf y}_i\in\FFq2^n$ are linearly independent and satisfy $0={\bf x}_i^{\ast}{\bf x}_i={\bf y}_i^{\ast}{\bf y}_i={\bf x}_i^{\ast}{\bf y}_i$. Then there exist a matrix $P$ and a nonzero $b\in\FFq2$ such that $P^{\ast}P=I$, $P{\bf x}_1={\bf x}_2$, and $P{\bf y}_1=b {\bf y}_2$.
\end{lemma}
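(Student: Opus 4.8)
The plan is to build, separately for $i=1$ and $i=2$, a Witt basis of $\FFq2^n$ adapted to the totally isotropic plane ${\cal U}_i:=\langle{\bf x}_i,{\bf y}_i\rangle$, and then to take for $P$ the invertible matrix that carries the first basis to the second; this will in fact yield the conclusion with $b=1$, which is stronger than what is asked.

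The first step is to enlarge $\{{\bf x}_i,{\bf y}_i\}$ to a hyperbolic quadruple $\{{\bf x}_i,{\bf x}_i',{\bf y}_i,{\bf y}_i'\}$. As ${\bf x}_i,{\bf y}_i$ are linearly independent, so are the $\FFq2$-linear functionals ${\bf u}\mapsto{\bf x}_i^{\ast}{\bf u}$ and ${\bf u}\mapsto{\bf y}_i^{\ast}{\bf u}$, and one may pick ${\bf v}$ with ${\bf x}_i^{\ast}{\bf v}=1$ and ${\bf y}_i^{\ast}{\bf v}=0$. Replacing ${\bf v}$ by ${\bf v}+t{\bf x}_i$ leaves these two values unchanged (here ${\bf x}_i^{\ast}{\bf x}_i={\bf x}_i^{\ast}{\bf y}_i=0$ is used) while adding $\Tr(t)$ to ${\bf v}^{\ast}{\bf v}$; since $\Tr$ is onto $\FF$ we obtain an isotropic ${\bf x}_i'$ with ${\bf x}_i^{\ast}{\bf x}_i'=1$ and ${\bf y}_i^{\ast}{\bf x}_i'=0$, and necessarily ${\bf x}_i'\notin{\cal U}_i$. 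Running the same argument with the three independent functionals attached to ${\bf y}_i,{\bf x}_i,{\bf x}_i'$ (legitimate since $n\geq 4>3$), and now correcting by a multiple of ${\bf y}_i$, produces an isotropic ${\bf y}_i'$ with ${\bf y}_i^{\ast}{\bf y}_i'=1$, ${\bf x}_i^{\ast}{\bf y}_i'=0$ and ${\bf x}_i'^{\ast}{\bf y}_i'=0$. Thus $\langle{\bf x}_i,{\bf x}_i'\rangle$ and $\langle{\bf y}_i,{\bf y}_i'\rangle$ are mutually orthogonal hyperbolic planes, their span ${\cal Y}_i$ is a non-degenerate $4$-dimensional hermitian subspace, and hence $\FFq2^n={\cal Y}_i\perp{\cal Z}_i$ with ${\cal Z}_i:={\cal Y}_i^{\perp}$ non-degenerate of dimension $n-4\geq 0$. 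By~(\ref{i8}) the restriction of the form to ${\cal Z}_i$ possesses an orthonormal basis ${\bf z}_1^{(i)},\ldots,{\bf z}_{n-4}^{(i)}$ (the claim is vacuous when $n=4$).

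Finally, let $P$ be the invertible matrix with $P{\bf x}_1={\bf x}_2$, $P{\bf x}_1'={\bf x}_2'$, $P{\bf y}_1={\bf y}_2$, $P{\bf y}_1'={\bf y}_2'$ and $P{\bf z}_j^{(1)}={\bf z}_j^{(2)}$. The two bases ${\cal B}_i=({\bf x}_i,{\bf x}_i',{\bf y}_i,{\bf y}_i',{\bf z}_1^{(i)},\ldots,{\bf z}_{n-4}^{(i)})$ of $\FFq2^n$ share, by construction, the same Gram matrix with respect to the standard form, so writing $M_i$ for the matrix whose columns are ${\cal B}_i$ we have $M_1^{\ast}M_1=M_2^{\ast}M_2$ and $P=M_2M_1^{-1}$, whence $P^{\ast}P=(M_1^{\ast})^{-1}(M_2^{\ast}M_2)M_1^{-1}=(M_1^{\ast})^{-1}(M_1^{\ast}M_1)M_1^{-1}=I$; together with $P{\bf x}_1={\bf x}_2$ and $P{\bf y}_1={\bf y}_2$ this proves the lemma with $b=1$. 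I expect the only step with genuine content to be the construction of the completing isotropic vectors ${\bf x}_i',{\bf y}_i'$, which is also precisely where the hypothesis $n\geq 4$ is needed; the rest is routine bookkeeping with Gram matrices. As an alternative one could first apply Lemma~\ref{pomozna} to reduce to ${\bf x}_1={\bf x}_2$ and then work inside ${\bf x}_1^{\perp}/\langle{\bf x}_1\rangle$, but that route seems to demand more care in controlling the scalar $b$.
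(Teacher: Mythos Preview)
Your proof is correct, and in fact sharper than the paper's: you obtain $b=1$ outright. The argument is the standard Witt-extension manoeuvre---complete each totally isotropic pair $\{{\bf x}_i,{\bf y}_i\}$ to a hyperbolic quadruple, then to a full basis whose Gram matrix is the fixed block form $\left[\begin{smallmatrix}0&1\\1&0\end{smallmatrix}\right]\oplus\left[\begin{smallmatrix}0&1\\1&0\end{smallmatrix}\right]\oplus I_{n-4}$, and read $P$ off as the change-of-basis matrix. Each step checks: the trace-shift $\mathbf v\mapsto\mathbf v+t\mathbf x_i$ kills the norm without disturbing the prescribed inner products because $\mathbf x_i$ is isotropic and orthogonal to $\mathbf y_i$; the three functionals attached to $\mathbf x_i,\mathbf y_i,\mathbf x_i'$ are independent since $\mathbf x_i^{\ast}\mathbf x_i'=1$ forces $\mathbf x_i'\notin\mathcal U_i$; and non-degeneracy of $\mathcal Y_i$ gives the orthogonal splitting and, via~(\ref{i8}), an orthonormal basis of $\mathcal Z_i$.

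The paper takes a quite different, more bare-hands route: it fixes a concrete model pair $\mathbf e_1+a\mathbf e_2,\ \mathbf e_3+a\mathbf e_4$ and reduces in four nested steps, the innermost of which exhibits an explicit $4\times4$ unitary block $Q$ (with entries involving a chosen $d$ with $\Tr(d)=1$) that moves $\mathbf e_3+a\mathbf e_4$ to a prescribed target while fixing $\mathbf e_1+a\mathbf e_2$. The trade-off is clear: the paper's argument is self-contained and elementary but case-laden and produces only $b$ up to a scalar; your approach is cleaner, appeals to nothing beyond surjectivity of $\Tr$ and the diagonalisation~(\ref{i8}), and pins down $b=1$. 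Your closing remark is apt---the alternative route through Lemma~\ref{pomozna} and the quotient ${\bf x}_1^{\perp}/\langle{\bf x}_1\rangle$ is essentially what the paper does in Steps~3--4, and indeed the scalar $b$ there is tracked rather than eliminated.
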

\begin{proof}
Fix $a\in\FFq2$ such that $\N(a)=-1$ and let ${\bf e}_i$ be the $i$-th standard vector. We split the proof in four steps.
\begin{myenumerate}{Step}
\item The conclusion holds if ${\bf x}_1:={\bf e}_1+a{\bf e}_2=:{\bf x}_2$ and ${\bf y}_1:={\bf e}_3+a{\bf e}_4$.

Let ${\bf y}_2=(u_1,u_2,{\bf u}^{\tr})^{\tr}$, where ${\bf u}\in\FFq2^{n-2}$. The assumptions ${\bf x}_2^{\ast}{\bf y}_2=0={\bf y}_2^{\ast}{\bf y}_2$ imply that $u_1=-\inv{a}u_2$ and ${\bf u}^{\ast}{\bf u}=0$.

Assume first that $u_2=0$. Then $u_1=0$. By Lemma~\ref{pomozna} there exists an $(n-2)\times (n-2)$ matrix $R$ such that $R^{\ast}R=I_{n-2}$ and $R(1,a,0\ldots,0)^{\tr}={\bf u}$. Consequently, $P:=I_{2}\oplus R$ and $b:=1$ have the required properties.

Assume now that $u_2\neq 0$. By Lemma~\ref{pomozna} there is an $(n-2)\times (n-2)$ matrix $R$ such that $R^{\ast}R=I_{n-2}$ and $R(\inv{a},1,0\ldots,0)^{\tr}=u_2^{-1}{\bf u}$.
Choose $d$ such that $\Tr(d)=1$. A straightforward calculation shows that
$$Q:=\begin{bmatrix}
2-d& \inv{ad}&-\inv{a}&0\\
a\inv{d}& d& 1&0\\
0& 0&0&-\inv{a}^2\\
a& -1&1&0
\end{bmatrix}\oplus I_{n-4}$$
satisfies $Q^{\ast}Q=I$. Moreover, $P:=(I_{2}\oplus R)Q$ and $b:=u_2^{-1}$ solve our problem.

\item The conclusion holds if ${\bf x}_1:={\bf e}_1+a{\bf e}_2=:{\bf x}_2$.

By Step~1, there are matrices $P_1$, $P_2$ and nonzero scalars $b_1, b_2$ such that $P_1{\bf x}_1={\bf x}_2$, $P_1({\bf e}_3+a{\bf e}_4)=b_1{\bf y}_1$,
$P_2{\bf x}_1={\bf x}_2$, $P_2({\bf e}_3+a{\bf e}_4)=b_2{\bf y}_2$,  and $P_i^{\ast}P_i=I$. Then, $P:=P_2P_1^{-1}$ and $b:=b_2b_1^{-1}$ have the required properties.

\item The conclusion holds if ${\bf x}_1={\bf x}_2$.

By Lemma~\ref{pomozna}, there exists a matrix $Q$ with $Q^{\ast} Q=I$ and $Q{\bf x}_1={\bf e}_1+a{\bf e}_2$. Consequently $0=(Q{\bf x}_1)^{\ast}(Q{\bf y}_1)=(Q{\bf x}_1)^{\ast}(Q{\bf y}_2)=(Q{\bf y}_1)^{\ast}(Q{\bf y}_1)=(Q{\bf y}_2)^{\ast}(Q{\bf y}_2)$. By Step~2, there exist $R$ and $b$ with $R(Q{\bf x}_1)=(Q{\bf x}_1)$, $R(Q{\bf y}_1)=b(Q{\bf y}_2)$, and $R^{\ast}R=I$. If $P:=Q^{\ast}RQ$, then $P^{\ast}P=I$, $P{\bf x}_1={\bf x}_2$, and $P{\bf y}_1=b{\bf y}_2$.

\item The conclusion holds whenever the assumptions in lemma are satisfied.

By Lemma~\ref{pomozna}, there is a matrix $Q$ with $Q^{\ast} Q=I$ and $Q{\bf x}_1={\bf x}_2$. Consequently $0={\bf x}_2^{\ast}(Q{\bf y}_1)=(Q{\bf y}_1)^{\ast}(Q{\bf y}_1)$. By Step~3, there exist $R$ and $b$ with $R{\bf x}_2={\bf x}_2$, $R(Q{\bf y}_1)=b{\bf y}_2$, and $R^{\ast}R=I$. Matrix $P:=RQ$ has the required properties.
\qedhere
\end{myenumerate}
\end{proof}

\begin{lemma}\label{nova}
For $i=1,2$ assume that vectors ${\bf x}_i,{\bf y}_i\in\FFq2^n$ are linearly independent and satisfy $0={\bf x}_i^{\ast}{\bf x}_i={\bf y}_i^{\ast}{\bf y}_i$ and $0\neq {\bf x}_i^{\ast}{\bf y}_i$. Then there exist a matrix $P$ and a nonzero $b\in\FFq2$ such that $P^{\ast}P=I$, $P{\bf x}_1={\bf x}_2$, and $P{\bf y}_1=b {\bf y}_2$.
\end{lemma}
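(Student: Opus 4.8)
The plan is to move each pair $({\bf x}_i,{\bf y}_i)$ to a common normal form by a unitary matrix and then compose, as in the proofs of Lemmas~\ref{pomozna} and~\ref{4x4}. The crucial observation — and the reason this lemma should be \emph{easier} than Lemma~\ref{4x4} — is that the hypothesis $0\neq{\bf x}_i^{\ast}{\bf y}_i$ makes the plane $\pi_i:=\langle{\bf x}_i,{\bf y}_i\rangle$ \emph{nondegenerate} for the form $({\bf u},{\bf v})\mapsto{\bf u}^{\ast}{\bf v}$: in the basis $({\bf x}_i,{\bf y}_i)$ its Gram matrix is $\bigl[\begin{smallmatrix}0&c_i\\ \inv{c_i}&0\end{smallmatrix}\bigr]$, where $c_i:={\bf x}_i^{\ast}{\bf y}_i\neq0$, and this matrix is invertible. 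Hence $\FFq2^n=\pi_i\oplus\pi_i^{\perp}$, the form restricts to a nondegenerate form of rank $n-2$ on $\pi_i^{\perp}$, and by~(\ref{i8}) there is an orthonormal basis ${\bf u}_3^{(i)},\ldots,{\bf u}_n^{(i)}$ of $\pi_i^{\perp}$.

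For the normal form I would fix $a,a'\in\FFq2$ with $\N(a)=\N(a')=-1$ and $a\neq a'$ (possible by~(\ref{i3})), set ${\bf x}_0:={\bf e}_1+a{\bf e}_2$, ${\bf y}_0:={\bf e}_1+a'{\bf e}_2$, and note that ${\bf x}_0^{\ast}{\bf x}_0={\bf y}_0^{\ast}{\bf y}_0=0$ while $d:={\bf x}_0^{\ast}{\bf y}_0=1+\inv{a}a'\neq0$ (indeed $d=0$ would force $a'=-\inv{a}^{-1}=a$). Then for $i=1,2$ I would define the linear map $R_i$ on $\FFq2^n$ by $R_i{\bf x}_i:={\bf x}_0$, $R_i{\bf y}_i:=\frac{c_i}{d}{\bf y}_0$ and $R_i{\bf u}_k^{(i)}:={\bf e}_k$ for $3\le k\le n$; this is well defined since $\{{\bf x}_i,{\bf y}_i,{\bf u}_3^{(i)},\ldots,{\bf u}_n^{(i)}\}$ and $\{{\bf x}_0,\frac{c_i}{d}{\bf y}_0,{\bf e}_3,\ldots,{\bf e}_n\}$ are both bases of $\FFq2^n$. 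A short computation (using ${\bf x}_0^{\ast}\big(\frac{c_i}{d}{\bf y}_0\big)=c_i$ and ${\bf u}_j^{(i)\ast}{\bf u}_k^{(i)}=\delta_{jk}$) shows that these two ordered bases have the \emph{same} Gram matrix $\bigl[\begin{smallmatrix}0&c_i\\ \inv{c_i}&0\end{smallmatrix}\bigr]\oplus I_{n-2}$; writing $M_i,N_i$ for the matrices whose columns list the first, respectively the second, of these bases, this says $M_i^{\ast}M_i=N_i^{\ast}N_i$, and since $R_iM_i=N_i$ we get $R_i^{\ast}R_i=(M_i^{-1})^{\ast}(N_i^{\ast}N_i)M_i^{-1}=(M_i^{-1})^{\ast}(M_i^{\ast}M_i)M_i^{-1}=I$.

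Finally I would take $P:=R_2^{-1}R_1$, which is unitary as a product of unitary matrices, and check $P{\bf x}_1=R_2^{-1}{\bf x}_0={\bf x}_2$ and $P{\bf y}_1=\frac{c_1}{d}R_2^{-1}{\bf y}_0=\frac{c_1}{d}\cdot\frac{d}{c_2}{\bf y}_2=\frac{c_1}{c_2}{\bf y}_2$, so $b:=c_1/c_2\neq0$ does the job. I do not expect a genuine obstacle here: the only nonroutine ingredients are the elementary facts that a subspace on which the ambient form restricts nondegenerately splits off with nondegenerate orthogonal complement, and that two ordered bases with equal Gram matrices differ by a matrix $P$ with $P^{\ast}P=I$, and both of these are settled by the one-line computations above. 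In particular the case distinctions that complicate Lemma~\ref{4x4} are avoided precisely because $\pi_i$ is nondegenerate, and this argument does not even need Lemmas~\ref{pomozna} and~\ref{4x4}; it also covers $n=2$, where the vectors ${\bf u}_k^{(i)}$ simply do not occur.
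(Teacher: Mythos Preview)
Your proof is correct. Both arguments reduce each pair $({\bf x}_i,{\bf y}_i)$ to a fixed normal form by a unitary change of basis and then compose, but the mechanism differs. The paper explicitly constructs orthonormal vectors ${\bf t}_1,{\bf t}_2$ in the plane $\langle{\bf x}_i,{\bf y}_i\rangle$ by concrete formulas, which forces a case split between $q$ odd and $q$ even (to avoid division by~$2$), and then invokes Proposition~\ref{ortonormalnost} to extend to a full orthonormal basis. You bypass this: you keep the hyperbolic pair $({\bf x}_i,{\bf y}_i)$ intact, match it to a fixed hyperbolic pair $({\bf x}_0,\frac{c_i}{d}{\bf y}_0)$ in $\langle{\bf e}_1,{\bf e}_2\rangle$ with the same Gram matrix, and handle the complement $\pi_i^{\perp}$ directly via~(\ref{i8}); unitarity of $R_i$ then drops out of the equal-Gram-matrix identity $R_i^{\ast}R_i=(M_i^{-1})^{\ast}N_i^{\ast}N_iM_i^{-1}=I$. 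Your route is characteristic-free and self-contained (it uses neither Lemma~\ref{pomozna} nor Proposition~\ref{ortonormalnost}), at the cost of being less explicit about what $P$ actually looks like; the paper's approach gives a more hands-on construction but pays for it with the parity split.
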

\begin{proof}
Fix $i\in\{1,2\}$. By~(\ref{i3}), there is $a\in \FFq2\backslash\{1\}$ such that $\N(a)=-1$. According on whether $q$ is odd or even define ${\bf u}, {\bf v}, {\bf t}_1, {\bf t}_2$ as follows:
$$\begin{tabular}{|l|l|}
  \hline
   $q$ odd & $q$ even \\
  \hline
  ${\bf u}:={\bf e}_1+a{\bf e}_2$& ${\bf u}:={\bf e}_1+a{\bf e}_2$\\
${\bf v}:=\inv{a}{\bf e}_1+{\bf e}_2$& ${\bf v}:=a{\bf e}_1+{\bf e}_2$\\
${\bf t}_1:=\frac{1}{2}\cdot  {\bf x}_i+\frac{1}{{\bf x}_i^{\ast}{\bf y}_i}\cdot {\bf y}_i$& ${\bf t}_1:=\frac{\Tr(a)}{1+a^2}\cdot {\bf x}_i+\frac{a}{(1+a^2){\bf x}_i^{\ast}{\bf y}_i}\cdot  {\bf y}_i$\\
${\bf t}_2:=\frac{\inv{a}}{2} \cdot {\bf x}_i-\frac{\inv{a}}{{\bf x}_i^{\ast}{\bf y}_i}\cdot  {\bf y}_i$& ${\bf t}_2:=\frac{a\Tr(a)}{1+a^2}\cdot  {\bf x}_i+\frac{1}{(1+a^2){\bf x}_i^{\ast}{\bf y}_i} \cdot {\bf y}_i$\\
  \hline
\end{tabular}$$
Vectors ${\bf t}_1$ and ${\bf t}_2$ are orthonormal  in both cases. By Proposition~\ref{ortonormalnost}, we can enlarge $\{{\bf t}_1,{\bf t}_2\}$ to an orthonormal basis $\{{\bf t}_1,\ldots,{\bf t}_n\}$. Let $R_i$ be the matrix with ${\bf t}_j$ as the $j$-th column. Then $R_i^{\ast}R_i=I$ and obviously $R_i R_i^{\ast}=I$ as well. Moreover,
if $q$ is odd then
$$R_i {\bf u}=\frac{2}{{\bf x}_i^{\ast}{\bf y}_i}\cdot {\bf y}_i\quad \textrm{and}\quad R_i {\bf v}=\inv{a}\cdot {\bf x}_i,$$
and if $q$ is even, then
$$R_i {\bf u}=\Tr(a)\cdot {\bf x}_i\quad \textrm{and}\quad R_i {\bf v}=\frac{1}{{\bf x}_i^{\ast}{\bf y}_i}\cdot {\bf y}_i,$$
so in both cases $P:=R_2R_1^{-1}$ and $b:=\frac{{\bf x}_1^{\ast}{\bf y}_1}{{\bf x}_2^{\ast}{\bf y}_2}$ possess the required properties.
\end{proof}

Below we use $l_{\bf x}^{A}$ to denote the maximal clique in $\hgl$ formed by  $0\neq {\bf x}\in\FFq2^n$ and $A\in\hgl$, that is, $l_{\bf x}^{A}=\{A+\lambda {\bf x}{\bf x}^{\ast}\ :\ \lambda \in \FF, \lambda{\bf x}^{\ast}A^{-1}{\bf x}\neq -1\}$. So $l_{\bf x}^A$ is a $q$-clique if ${\bf x}^{\ast}A^{-1}{\bf x}=0$ and a $(q-1)$-clique if ${\bf x}^{\ast}A^{-1}{\bf x}\neq 0$. We say that $l_{\bf x}^A$ is a \emph{clique of $A$}. If $A=I$,  we simply write $l_{\bf x}$. We say that  $l_{\bf x}^A$ and $l_{\bf y}^A$ are \emph{$A$-orthogonal} if ${\bf x}^{\ast}A^{-1}{\bf y}=0$. If $A=I$, we call them \emph{orthogonal}.

\begin{lemma}\label{lemma-pari2-simetrija}
For $i\in\{1,2\}$ let $A_i\in \hgl$ and suppose that $l_{{\bf x}_i}^{A_i}$ and $l_{{\bf y}_i}^{A_i}$ are two distinct $q$-cliques.
\begin{enumerate}
\item If $n\in\{2,3\}$, then $l_{{\bf x}_i}^{A_i}$ and $l_{{\bf y}_i}^{A_i}$ are $A_i$-non-orthogonal.
\item If $l_{{\bf x}_i}^{A_i}$ and $l_{{\bf y}_i}^{A_i}$ are either $A_i$-non-orthogonal for both $i\in\{1,2\}$
 or $A_i$-orthogonal for both $i\in\{1,2\}$, then there exists an invertible matrix $P$ such that $PA_1 P^{\ast}=A_2$, $Pl_{{\bf x}_1}^{A_1}P^{\ast}=l_{{\bf x}_2}^{A_2}$, and $Pl_{{\bf y}_1}^{A_1}P^{\ast}=l_{{\bf y}_2}^{A_2}$.
\end{enumerate}
\end{lemma}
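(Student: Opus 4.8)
The plan is to handle (i) by a short contradiction argument resting on Theorem~\ref{bose_podprostor}, and (ii) by first normalizing $A_1$ and $A_2$ to the identity matrix and then invoking Lemma~\ref{nova} in the non-orthogonal case and Lemma~\ref{4x4} in the orthogonal case.

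For part (i) I would argue by contradiction: suppose that for some $i$ the distinct $q$-cliques $l_{{\bf x}_i}^{A_i}$ and $l_{{\bf y}_i}^{A_i}$ are $A_i$-orthogonal, and suppress the index, writing $A,{\bf x},{\bf y}$. Being $q$-cliques means ${\bf x}^{\ast}A^{-1}{\bf x}=0={\bf y}^{\ast}A^{-1}{\bf y}$; being distinct means $\langle{\bf x}\rangle\neq\langle{\bf y}\rangle$, so ${\bf x},{\bf y}$ are $\FFq2$-linearly independent; and $A$-orthogonality means ${\bf x}^{\ast}A^{-1}{\bf y}=0$, whence ${\bf y}^{\ast}A^{-1}{\bf x}=0$ as well because $A^{-1}$ is hermitian. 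Expanding $(a{\bf x}+b{\bf y})^{\ast}A^{-1}(a{\bf x}+b{\bf y})$ and using these four vanishings gives ${\bf u}^{\ast}A^{-1}{\bf u}=0$ for every ${\bf u}$ in the $2$-dimensional subspace ${\cal U}:=\langle{\bf x},{\bf y}\rangle$; by~(\ref{i12}), applied with $A^{-1}$ in place of $A$, this says ${\cal U}\subseteq V_{\inv{A^{-1}}}$. Since $\inv{A^{-1}}\in\hgl$, Theorem~\ref{bose_podprostor} bounds $\dim{\cal U}\leq\lfloor n/2\rfloor$, which for $n\in\{2,3\}$ reads $2\leq 1$, a contradiction.

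For part (ii) I would first reduce to $A_1=A_2=I$. By~(\ref{i8}) pick invertible $Q_i$ with $Q_iA_iQ_i^{\ast}=I$; then $A_i^{-1}=Q_i^{\ast}Q_i$, so conjugation by $Q_i$ sends $A_i$ to $I$, sends $l_{{\bf x}_i}^{A_i},l_{{\bf y}_i}^{A_i}$ to $l_{Q_i{\bf x}_i},l_{Q_i{\bf y}_i}$, and turns ${\bf x}_i^{\ast}A_i^{-1}{\bf x}_i$, ${\bf y}_i^{\ast}A_i^{-1}{\bf y}_i$, ${\bf x}_i^{\ast}A_i^{-1}{\bf y}_i$ into $(Q_i{\bf x}_i)^{\ast}(Q_i{\bf x}_i)$, $(Q_i{\bf y}_i)^{\ast}(Q_i{\bf y}_i)$, $(Q_i{\bf x}_i)^{\ast}(Q_i{\bf y}_i)$; in particular $q$-cliques and ($A$-)orthogonality are preserved. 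So after renaming we may assume $A_i=I$, that ${\bf x}_i,{\bf y}_i$ are linearly independent with ${\bf x}_i^{\ast}{\bf x}_i=0={\bf y}_i^{\ast}{\bf y}_i$, and that either ${\bf x}_i^{\ast}{\bf y}_i\neq 0$ for both $i$ or ${\bf x}_i^{\ast}{\bf y}_i=0$ for both $i$; in the latter case part~(i) already forces $n\geq 4$. The former case is exactly the hypothesis of Lemma~\ref{nova} and the latter exactly that of Lemma~\ref{4x4}, so in either case there are an invertible $P$ with $P^{\ast}P=I$ (hence $PP^{\ast}=I$) and a nonzero $b\in\FFq2$ with $P{\bf x}_1={\bf x}_2$ and $P{\bf y}_1=b{\bf y}_2$. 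Then $PIP^{\ast}=I$ and $Pl_{{\bf x}_1}P^{\ast}=l_{{\bf x}_2}$, while $P({\bf y}_1{\bf y}_1^{\ast})P^{\ast}=\N(b){\bf y}_2{\bf y}_2^{\ast}$ with $\N(b)\in\FF\backslash\{0\}$ gives $Pl_{{\bf y}_1}P^{\ast}=l_{{\bf y}_2}$; finally $Q_2^{-1}PQ_1$ solves the original, un-normalized problem.

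I do not expect a genuine obstacle: everything substantial is packed into Lemmas~\ref{nova} and~\ref{4x4} and Theorem~\ref{bose_podprostor}. The plan does demand careful bookkeeping — verifying that conjugation by an invertible matrix carries $q$-cliques to $q$-cliques and preserves $A$-orthogonality (via $A_i^{-1}=Q_i^{\ast}Q_i$), and that replacing ${\bf y}_1$ by $b{\bf y}_2$ changes nothing about the clique because $\lambda\mapsto\lambda\N(b)$ permutes $\FF$. The only conceptually delicate point is that the orthogonal case cannot occur for $n\in\{2,3\}$ (which is precisely part~(i)); this is what makes it legitimate to call on Lemma~\ref{4x4}, whose statement is restricted to $n\geq 4$.
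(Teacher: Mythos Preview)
Your proposal is correct and follows essentially the same approach as the paper: part~(i) via Theorem~\ref{bose_podprostor}, and part~(ii) by conjugating each $A_i$ to $I$ and then invoking Lemma~\ref{4x4} in the orthogonal case and Lemma~\ref{nova} in the non-orthogonal case, with the final map obtained by undoing the normalizations. Your write-up is slightly more explicit about the bookkeeping (e.g.\ why $Pl_{{\bf y}_1}P^{\ast}=l_{{\bf y}_2}$ via the permutation $\lambda\mapsto\lambda\N(b)$), but the substance is identical.
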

\begin{proof}
\begin{enumerate}
\item Suppose that  ${\bf x}_i^{\ast}A_i^{-1}{\bf y}_i= 0$ for some $i\in\{1,2\}$. Since ${\bf x}_i^{\ast}A_i^{-1}{\bf x}_i=0={\bf y}_i^{\ast}A_i^{-1}{\bf y}_i$,  $V_{\inv{A_i^{-1}}}$ contains the two dimensional space spanned by ${\bf x}_i$ and ${\bf y}_i$, so $n\geq 4$ by Theorem~\ref{bose_podprostor}.
\item For $i=1,2$ pick an invertible matrix $Q_i$ such that $A_i=Q_iQ_i^{\ast}$ and denote ${\bf z}_i:=Q_i^{-1}{\bf x}_i$, ${\bf w}_i:=Q_i^{-1}{\bf y}_i$. Since $Q_i^{-1}l_{{\bf x}_i}^{A_i}(Q_i^{-1})^{\ast}=l_{{\bf z}_i}$ and $Q_i^{-1}l_{{\bf y}_i}^{A_i}(Q_i^{-1})^{\ast}=l_{{\bf w}_i}$ are $q$-cliques, we deduce that
${\bf z}_i^{\ast}{\bf z}_i=0={\bf w}_i^{\ast}{\bf w}_i$.

Assume first that ${\bf x}_1^{\ast}A_1^{-1}{\bf y}_1= 0={\bf x}_2^{\ast}A_2^{-1}{\bf y}_2$. Then $n\geq 4$ and ${\bf z}_i^{\ast}{\bf w}_i=0$. By Lemma~\ref{4x4} there is a matrix $R$ and $b\neq 0$ such that $R^{\ast}R=I$, $R{\bf z}_1={\bf z}_2$, and $R{\bf w}_1=b {\bf w}_2$, so $P:=Q_2RQ_1^{-1}$ has the required properties.

If ${\bf x}_1^{\ast}A_1^{-1}{\bf y}_1\neq 0\neq {\bf x}_2^{\ast}A_2^{-1}{\bf y}_2$, then ${\bf z}_i^{\ast}{\bf w}_i\neq 0$, and we can repeat the proof above by replacing Lemma~\ref{4x4} with Lemma~\ref{nova}.\qedhere
\end{enumerate}
\end{proof}

In the proof of next lemma we tacitly rely on the following fact: an adjacency preserving map, i.e. an endomorphism of graph $\hgl$, maps any maximal clique injectively into some maximal clique.

\begin{lemma}\label{lemma-main}
Assume that $q\geq 4$ and ${\bf x}_1, {\bf x}_2\in \FF_{q^2}^n$ satisfy ${\bf x}_1^{\ast}{\bf x}_1=0$ and ${\bf x}_1^{\ast}{\bf x}_2\neq 0$.  If
$\Phi: \hgl\to\hgl$ preserves adjacency and $\Phi(l_{{\bf x}_2})\subseteq \Phi(l_{{\bf x}_1})$, then $\Phi(l_{a_1 {\bf x}_1+a_2{\bf x}_2})\subseteq \Phi(l_{{\bf x}_1})$ for all $a_1,a_2\in \FF_{q^2}$ with $(a_1,a_2)\neq (0,0)$.
\end{lemma}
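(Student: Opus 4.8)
The plan is to first identify $C:=\Phi(l_{{\bf x}_1})$ as a maximal clique, record the identification of $l_{{\bf x}_2}$ with $l_{{\bf x}_1}$ that $\Phi$ induces, then propagate the conclusion to a two-dimensional ``grid'' of matrices, and finally use a triangle argument to pull one non-identity vertex of $l_{a_1{\bf x}_1+a_2{\bf x}_2}$ into $C$. Put ${\bf x}:=a_1{\bf x}_1+a_2{\bf x}_2$ and $B:=\Phi(I)$; we may assume $a_1a_2\neq0$, since otherwise $l_{{\bf x}}$ equals $l_{{\bf x}_1}$ or $l_{{\bf x}_2}$. As ${\bf x}_1^{\ast}{\bf x}_1=0$, the vectors ${\bf x}_1,{\bf x}_2$ are linearly independent, so the $n\times2$ matrix $X:=[{\bf x}_1\mid{\bf x}_2]$ has rank $2$ and $\rank(XTX^{\ast})=\rank T$ for every hermitian $2\times2$ matrix $T$; all rank computations below would be carried out through this identity. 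By Corollary~\ref{handbook}, $I+\lambda{\bf x}_1{\bf x}_1^{\ast}\in\hgl$ for all $\lambda\in\FF$, so $l_{{\bf x}_1}$ is a $q$-clique with exactly $q$ vertices; since an endomorphism of $\hgl$ maps a maximal clique injectively into a maximal clique and maximal cliques have at most $q$ vertices, $C$ is itself a maximal ($q$-)clique and $\Phi|_{l_{{\bf x}_1}}\colon l_{{\bf x}_1}\to C$ is a bijection. Hence $\pi:=(\Phi|_{l_{{\bf x}_1}})^{-1}\circ\Phi|_{l_{{\bf x}_2}}\colon l_{{\bf x}_2}\to l_{{\bf x}_1}$ is a well-defined injection with $\pi(I)=I$ and $\Phi\circ\pi=\Phi|_{l_{{\bf x}_2}}$. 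The device I would use throughout is the \emph{triangle principle}: every edge of $\hgl$ lies in a unique maximal clique (cf.~(\ref{i5})), so a vertex adjacent to two distinct vertices of $C$ lies in $C$; in particular, if $l_{{\bf y}}\ni I$ and some $M\in l_{{\bf y}}\setminus\{I\}$ has $\Phi(M)\in C$, then $B$ and $\Phi(M)$ are distinct (being adjacent) vertices of $C$ contained in the clique $\Phi(l_{{\bf y}})$, whence $\Phi(l_{{\bf y}})\subseteq C$. Thus it suffices to produce one $M\in l_{{\bf x}}\setminus\{I\}$ with $\Phi(M)\in C$.

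First I would show that $\Phi$ maps the grid $\mathcal G:=\{I+\lambda{\bf x}_1{\bf x}_1^{\ast}+\mu{\bf x}_2{\bf x}_2^{\ast}:\lambda,\mu\in\FF\}\cap\hgl$ into $C$. Fix $A:=I+\mu{\bf x}_2{\bf x}_2^{\ast}\in\hgl$ with $\mu\neq0$ (for $\mu=0$ this is $\Phi(l_{{\bf x}_1})=C$). At most three values of $\lambda_0$ are ``bad'': $\lambda_0=0$; the unique $\lambda_0$ (by Corollary~\ref{handbook}) for which $A+\lambda_0{\bf x}_1{\bf x}_1^{\ast}$ is singular; and the $\lambda_0$ with $I+\lambda_0{\bf x}_1{\bf x}_1^{\ast}=\pi(A)$. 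As $q\geq4$, choose $\lambda_0$ good and set $G:=A+\lambda_0{\bf x}_1{\bf x}_1^{\ast}$, $A':=I+\lambda_0{\bf x}_1{\bf x}_1^{\ast}\in l_{{\bf x}_1}$. Then $G$ is adjacent to $A\in l_{{\bf x}_2}$ (difference $\lambda_0{\bf x}_1{\bf x}_1^{\ast}$) and to $A'$ (difference $\mu{\bf x}_2{\bf x}_2^{\ast}$), and $\Phi(A)=\Phi(\pi(A))$, $\Phi(A')$ are distinct vertices of $C$ because $\pi(A)\neq A'$ and $\Phi|_{l_{{\bf x}_1}}$ is injective. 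By the triangle principle $\Phi(G)\in C$; since the clique $\Phi(l_{{\bf x}_1}^{A})$ then contains the edge $\{\Phi(A),\Phi(G)\}$, and $C$ is the unique maximal clique on that edge, $\Phi(l_{{\bf x}_1}^{A})\subseteq C$. Letting $\mu$ vary gives $\Phi(\mathcal G)\subseteq C$. I would also record that $\Phi$ is injective on each maximal clique $l_{{\bf x}_2}^{\,I+\lambda{\bf x}_1{\bf x}_1^{\ast}}$ lying in $\mathcal G$, and that $I+\lambda{\bf x}_1{\bf x}_1^{\ast}$ with $\lambda\neq0$ is mapped into $C\setminus\{B\}$.

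To build $M$, fix $\lambda_0\in\FF\setminus\{0\}$ and consider $\ell:=l_{{\bf x}_2}^{\,I+\lambda_0{\bf x}_1{\bf x}_1^{\ast}}$, with vertices $w_\mu:=I+\lambda_0{\bf x}_1{\bf x}_1^{\ast}+\mu{\bf x}_2{\bf x}_2^{\ast}$: by the previous step $\Phi$ maps $\ell$ into $C$ apart from at most one exceptional vertex (the one with $I+\mu{\bf x}_2{\bf x}_2^{\ast}$ singular), is injective on $\ell$, and maps $w_0\in l_{{\bf x}_1}$ into $C\setminus\{B\}$. A counting argument then supplies a vertex $w:=w_{\mu_0}$ with $\mu_0\neq0$, $\Phi(w)\in C\setminus\{B\}$, $\N(a_1)\mu_0+\N(a_2)\lambda_0\neq0$, and such that, with $\nu_0:=\lambda_0\mu_0/(\N(a_1)\mu_0+\N(a_2)\lambda_0)\neq0$, the matrix $M:=I+\nu_0{\bf x}{\bf x}^{\ast}$ lies in $\hgl$. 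Writing $M-w=XTX^{\ast}$ with
$$T=\begin{bmatrix}\nu_0\N(a_1)-\lambda_0 & \nu_0 a_1\overline{a_2}\\ \nu_0\overline{a_1}a_2 & \nu_0\N(a_2)-\mu_0\end{bmatrix},$$
a direct computation gives $\det T=\lambda_0\mu_0-\nu_0(\N(a_1)\mu_0+\N(a_2)\lambda_0)=0$ while $T\neq0$, so $\rank(M-w)=\rank T=1$ and $M$ is adjacent to $w$; moreover $M-I=\nu_0{\bf x}{\bf x}^{\ast}$ has rank $1$, so $M$ is adjacent to $I$. Since $B=\Phi(I)$ and $\Phi(w)$ are distinct vertices of $C$, the triangle principle forces $\Phi(M)\in C$, and $M\in l_{{\bf x}}\setminus\{I\}$ finishes the proof.

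The step I expect to be the main obstacle is the count in the last paragraph: the parameter $\mu_0$ must avoid $0$, the value making $I+\mu_0{\bf x}_2{\bf x}_2^{\ast}$ singular, the value(s) with $\Phi(w_{\mu_0})=B$, the value with $\N(a_1)\mu_0+\N(a_2)\lambda_0=0$, and the value making $M$ singular, that is, at most five forbidden parameters against the $q-1$ (or $q$) vertices of $\ell$. One may argue instead globally on $\mathcal G$, using that $\Phi$ collapses at most one vertex of each of its cliques onto $B$ while the ``hyperbolas'' $\lambda\mu=\nu_0(\N(a_1)\mu+\N(a_2)\lambda)$, $\nu_0\in\FF^{\ast}$, sweep out almost all of $\mathcal G$. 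Either estimate is comfortable for large $q$ but tight for small $q$; as anticipated in Section~2, the value $q=4$ in particular needs a separate and more careful argument (the point at which one would invoke the special arithmetic of $\FF_{16}$). A cleaner route to Steps~2--3 would be to normalize $({\bf x}_1,{\bf x}_2)$ beforehand by a map $Y\mapsto QYQ^{\ast}$ with $Q^{\ast}Q=I$ (using Lemmas~\ref{pomozna} and~\ref{nova}), reducing everything to an explicit computation; this, however, does not remove the small-$q$ difficulty, which is intrinsic to the number of available parameters.
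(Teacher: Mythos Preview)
Your overall architecture --- establish $\Phi(\mathcal G)\subseteq C$ for the grid $\mathcal G=\{I+\lambda{\bf x}_1{\bf x}_1^{\ast}+\mu{\bf x}_2{\bf x}_2^{\ast}\}\cap\hgl$, then use a triangle through some $w_{\mu_0}\in\mathcal G$ and $I$ to pull a second vertex of $l_{{\bf x}}$ into $C$ --- is exactly the strategy of the paper (its Cases~2 and~3 do this verbatim), and your grid step is correct for all $q\geq 4$. The genuine gap is the one you yourself flag: the final counting. With a generic $\lambda_0$ you list up to five forbidden values of $\mu_0$ against a clique $\ell$ of size $q$ or $q-1$, so the argument as written is not closed even for $q=5$, and certainly not for $q=4$.

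The paper closes this in two ways you do not carry out. First, it never works with an arbitrary $\lambda_0$: in Case~1 it chooses $\lambda_0$ so that $l_{{\bf x}}^{\,I+\lambda_0{\bf x}_1{\bf x}_1^{\ast}}$ is a $q$-clique, and in Case~3 it chooses $\lambda_0={\bf x}_2^{\ast}{\bf x}_2/\N({\bf x}_1^{\ast}{\bf x}_2)$ so that $\ell$ itself is a $q$-clique; in Case~2 with $q=4$ it picks $\lambda_0$ so that the unique singular vertex of $\ell$ coincides with the vertex violating $\lambda_0\N(a_2)+\mu\N(a_1)\neq 0$. Each such choice kills one constraint and brings the count down to at most three forbidden values, which suffices for all $q\geq4$ \emph{except} when ${\bf x}_2^{\ast}{\bf x}_2\neq 0$, ${\bf x}^{\ast}{\bf x}\neq 0$, and $q=4$. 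Second, for that residual case (the paper's Case~4) the paper gives a substantial ad hoc argument using the arithmetic facts~(\ref{i4})--(\ref{i44}) about $\FF_{16}$: it restricts $a_1$ to the four values $a_2\,({\bf x}_2^{\ast}{\bf x}_2/{\bf x}_2^{\ast}{\bf x}_1)\,t_j$ coming from the solutions of $\Tr(t)^2+\Tr(t)+\N(t)=0$, runs a tailored double-triangle construction through explicit matrices $M_2,M_3,N_2,N_3$, and then bootstraps to all remaining $a_1$ via a second application of the same trick (this is where the $11$-element set~(\ref{i44}) is used). Your remark that ``one would invoke the special arithmetic of $\FF_{16}$'' is correct in spirit but stops well short of this; the Case~4 argument is the hardest part of the proof and does not fall out of the generic scheme you describe. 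Your alternative ``hyperbola sweep'' over $\mathcal G$ is too vague to assess and would face the same obstruction, since for $q=4$ the grid $\mathcal G$ is tiny.
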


\begin{proof}
There is nothing to prove if $a_1=0$ or $a_2=0$, so assume that both are nonzero.
It follows from the assumptions that ${\bf x}_1, {\bf x}_2$ are linearly independent and $I+\lambda {\bf x}_1{\bf x}_1^{\ast}$ is invertible with inverse $I-\lambda {\bf x}_1{\bf x}_1^{\ast}$ for all $\lambda\in \FF$.
We separate four cases.
\begin{myenumerate}{Case}
\item Let ${\bf x}_2^{\ast}{\bf x}_2=0$ and $(a_1 {\bf x}_1+a_2{\bf x}_2)^{\ast}(a_1 {\bf x}_1+a_2{\bf x}_2)\neq 0$.\smallskip

       Choose
     $\lambda_0:=\frac{(a_1 {\bf x}_1+a_2{\bf x}_2)^{\ast}(a_1 {\bf x}_1+a_2{\bf x}_2)}{a_2\inv{a}_2{\bf x}_1^{\ast}{\bf x}_2{\bf x}_2^{\ast}{\bf x}_1}$. Then
    \begin{equation*}
     (a_1 {\bf x}_1+a_2{\bf x}_2)^{\ast} (I+\lambda_0 {\bf x}_1{\bf x}_1^{\ast})^{-1} (a_1 {\bf x}_1+a_2{\bf x}_2)=0
    \end{equation*}
    and $I+\lambda_0 {\bf x}_1{\bf x}_1^{\ast}+\mu (a_1 {\bf x}_1+a_2{\bf x}_2)(a_1 {\bf x}_1+a_2{\bf x}_2)^{\ast}$ is  invertible for all $\mu \in\FF$ by Corollary~\ref{handbook}. Since ${\bf x}_2^{\ast}{\bf x}_2=0$, $|l_{{\bf x}_2}|=q$, so $\Phi(l_{{\bf x}_2})= \Phi(l_{{\bf x}_1})$, that is,
     $\Phi(I+\nu {\bf x}_2{\bf x}_2^{\ast})=\Phi(I+f(\nu) {\bf x}_1{\bf x}_1^{\ast})$ for all $\nu\in\FF$, where $f$ is a bijection on $\FF$ with $f(0)=0$. Since $q\geq 4$, there exists a nonzero $\nu_0$ such that $f(\nu_0)\neq \lambda_0$ and $\lambda_0 \N(a_2)-\nu_0 \N(a_1)\neq 0$. Choose $\mu_0:=\frac{\lambda_0\nu_0}{\lambda_0 \N(a_2)-\nu_0 \N(a_1)}$. Then $I+\lambda_0 {\bf x}_1{\bf x}_1^{\ast}+\mu_0 (a_1 {\bf x}_1+a_2{\bf x}_2)(a_1 {\bf x}_1+a_2{\bf x}_2)^{\ast}$ and $I+\nu_0 {\bf x}_2{\bf x}_2^{\ast}$ are adjacent. Hence, $\Phi(I+\lambda_0 {\bf x}_1{\bf x}_1^{\ast})$, $\Phi(I+\lambda_0 {\bf x}_1{\bf x}_1^{\ast}+\mu_0 (a_1 {\bf x}_1+a_2{\bf x}_2)(a_1 {\bf x}_1+a_2{\bf x}_2)^{\ast})$, and $\Phi(I+f(\nu_0) {\bf x}_1{\bf x}_1^{\ast})$ are pairwise adjacent. Since the maximal clique containing both $\Phi(I+\lambda_0 {\bf x}_1{\bf x}_1^{\ast})$ and $\Phi(I+f(\nu_0) {\bf x}_1{\bf x}_1^{\ast})$ is unique by (\ref{i5}) and equals $\Phi(l_{{\bf x}_1})$, it follows that $\Phi(I+\lambda_0 {\bf x}_1{\bf x}_1^{\ast}+\mu_0 (a_1 {\bf x}_1+a_2{\bf x}_2)(a_1 {\bf x}_1+a_2{\bf x}_2)^{\ast})\in\Phi(l_{{\bf x}_1})$. Since $\Phi(I+\lambda_0 {\bf x}_1{\bf x}_1^{\ast})\in\Phi(l_{{\bf x}_1})$ as well, we deduce that $\Phi(I+\lambda_0 {\bf x}_1{\bf x}_1^{\ast}+\mu (a_1 {\bf x}_1+a_2{\bf x}_2)(a_1 {\bf x}_1+a_2{\bf x}_2)^{\ast})\in\Phi(l_{{\bf x}_1})$, i.e.,
     $\Phi(I+\lambda_0 {\bf x}_1{\bf x}_1^{\ast}+\mu (a_1 {\bf x}_1+a_2{\bf x}_2)(a_1 {\bf x}_1+a_2{\bf x}_2)^{\ast})=\Phi(I+g(\mu) {\bf x}_1{\bf x}_1^{\ast})$ for all $\mu\in\FF$, where $g$ is a bijection on $\FF$. Since $q\geq 4$, there exists a nonzero $\mu_1$ such that $g(\mu_1)\neq 0$ and $I+\mu_1 (a_1 {\bf x}_1+a_2{\bf x}_2)(a_1 {\bf x}_1+a_2{\bf x}_2)^{\ast}$ is invertible. Since $\Phi(I+\mu_1 (a_1 {\bf x}_1+a_2{\bf x}_2)(a_1 {\bf x}_1+a_2{\bf x}_2)^{\ast})$, $\Phi(I)$, and $\Phi(I+g(\mu_1) {\bf x}_1{\bf x}_1^{\ast})$ are pairwise adjacent, it follows that $\Phi(I+\mu_1 (a_1 {\bf x}_1+a_2{\bf x}_2)(a_1 {\bf x}_1+a_2{\bf x}_2)^{\ast})\in \Phi(l_{{\bf x}_1})$. Consequently $\Phi(l_{a_1 {\bf x}_1+a_2{\bf x}_2})\subseteq \Phi(l_{{\bf x}_1})$.

\item Let ${\bf x}_2^{\ast}{\bf x}_2=0$ and $(a_1 {\bf x}_1+a_2{\bf x}_2)^{\ast}(a_1 {\bf x}_1+a_2{\bf x}_2)=0$.\smallskip

     As in Case~1, $\Phi(I+\nu {\bf x}_2{\bf x}_2^{\ast})=\Phi(I+f(\nu) {\bf x}_1{\bf x}_1^{\ast})$ for all $\nu\in\FF$, where $f$ is a bijection on $\FF$ with $f(0)=0$. Pick an arbitrary nonzero $\lambda\in\FF$. Since $q\geq 4$, there exists a nonzero $\mu_0\in\FF$ such that $I+\lambda {\bf x}_1{\bf x}_1^{\ast}+\mu_0 {\bf x}_2{\bf x}_2^{\ast}$ is invertible and $f(\mu_0)\neq \lambda$. Since matrices $\Phi(I+\lambda {\bf x}_1{\bf x}_1^{\ast})$, $\Phi(I+\lambda {\bf x}_1{\bf x}_1^{\ast}+\mu_0 {\bf x}_2{\bf x}_2^{\ast})$, and $\Phi(I+f(\mu_0) {\bf x}_1{\bf x}_1^{\ast})$ are pairwise adjacent, it follows that $\Phi(I+\lambda {\bf x}_1{\bf x}_1^{\ast}+\mu_0 {\bf x}_2{\bf x}_2^{\ast})\in \Phi(l_{{\bf x}_1})$. Consequently,
    \begin{equation}\label{eq16}
    \Phi(I+\lambda {\bf x}_1{\bf x}_1^{\ast}+\mu {\bf x}_2{\bf x}_2^{\ast})\in \Phi(l_{{\bf x}_1})
    \end{equation}
    for all $\lambda,\mu\in\FF$ such that $I+\lambda {\bf x}_1{\bf x}_1^{\ast}+\mu {\bf x}_2{\bf x}_2^{\ast}$ is invertible.

    If $q\geq 5$, choose $0\neq\lambda_0\in \FF$ arbitrarily.
    Then there are two distinct nonzero $\mu_1,\mu_2\in\FF$ such that
    \begin{equation}\label{eq15}
    I+\lambda_0 {\bf x}_1{\bf x}_1^{\ast}+\mu_j {\bf x}_2{\bf x}_2^{\ast}\ \textrm{is invertible and}\ \lambda_0 \N(a_2)+\mu_j \N(a_1)\neq 0\quad (j=1,2).
    \end{equation}
    If $q=4$, then any element in $\FF$ is a square, so there exists a nonzero $\lambda_0$ such that $\lambda_0^2 \N(a_2)+\frac{\N(a_1)}{{\bf x}_1^{\ast}{\bf x}_2{\bf x}_2^{\ast}{\bf x}_1}=0$, that is, $\lambda_0 \N(a_2)+\frac{1}{\lambda_0{\bf x}_1^{\ast}{\bf x}_2{\bf x}_2^{\ast}{\bf x}_1} \N(a_1)=0$. By Corollary~\ref{handbook}, $I+\lambda_0 {\bf x}_1{\bf x}_1^{\ast}+\mu {\bf x}_2{\bf x}_2^{\ast}$ is singular precisely for $\mu=\frac{1}{\lambda_0{\bf x}_1^{\ast}{\bf x}_2{\bf x}_2^{\ast}{\bf x}_1}$, so (\ref{eq15}) is still satisfied for appropriate $\mu_1$ and $\mu_2$.

    Since~(\ref{eq16}) holds, we can pick $j\in \{1,2\}$ such that $\Phi(I+\lambda_0 {\bf x}_1{\bf x}_1^{\ast}+\mu_j {\bf x}_2{\bf x}_2^{\ast})\neq \Phi(I)$. Let $\eta_j:=\frac{\lambda_0\mu_j}{\lambda_0 \N(a_2)+\mu_j \N(a_1)}$.
    Since $$\rank\big(\eta_j (a_1 {\bf x}_1+a_2{\bf x}_2)(a_1 {\bf x}_1+a_2{\bf x}_2)^{\ast}-\lambda_0 {\bf x}_1{\bf x}_1^{\ast}-\mu_j {\bf x}_2{\bf x}_2^{\ast}\big)=1,$$ it follows that matrices $\Phi(I+\eta_j (a_1 {\bf x}_1+a_2{\bf x}_2)(a_1 {\bf x}_1+a_2{\bf x}_2)^{\ast})$, $\Phi(I)$, and $\Phi(I+\lambda_0 {\bf x}_1{\bf x}_1^{\ast}+\mu_j {\bf x}_2{\bf x}_2^{\ast})$ are pairwise adjacent, so $\Phi(I+\eta_j (a_1 {\bf x}_1+a_2{\bf x}_2)(a_1 {\bf x}_1+a_2{\bf x}_2)^{\ast})\in \Phi(l_{{\bf x}_1})$ and consequently $\Phi(l_{a_1 {\bf x}_1+a_2{\bf x}_2})\subseteq \Phi(l_{{\bf x}_1})$.

    \item Let ${\bf x}_2^{\ast}{\bf x}_2\neq 0$ and either $(a_1 {\bf x}_1+a_2{\bf x}_2)^{\ast}(a_1 {\bf x}_1+a_2{\bf x}_2)=0$ or $q\geq 5$.\smallskip

        Choose $\lambda_0:=\frac{{\bf x}_2^{\ast}{\bf x}_2}{{\bf x}_1^{\ast}{\bf x}_2{\bf x}_2^{\ast}{\bf x}_1}$. By Corollary~\ref{handbook}, $I+\lambda_0 {\bf x}_1{\bf x}_1^{\ast}+\mu {\bf x}_2{\bf x}_2^{\ast}$ is invertible for all $\mu$. Similarly as (\ref{eq16}) in Case~2 we see that $\Phi(I+\lambda_0 {\bf x}_1{\bf x}_1^{\ast}+\mu {\bf x}_2{\bf x}_2^{\ast})\in \Phi(l_{{\bf x}_1})$ for all $\mu$. So there exists $\mu_0\neq 0$ such that $\Phi(I+\lambda_0 {\bf x}_1{\bf x}_1^{\ast}+\mu_0 {\bf x}_2{\bf x}_2^{\ast})\neq \Phi(I)$ and $\lambda_0 \N(a_2)+\mu_0 \N(a_1)\neq 0$. In fact, if $q\geq 5$, then there are two such scalars. So if either $(a_1 {\bf x}_1+a_2{\bf x}_2)^{\ast}(a_1 {\bf x}_1+a_2{\bf x}_2)=0$ or $q\geq 5$, we can choose $\mu_0$, such that $I+\eta_0(a_1 {\bf x}_1+a_2{\bf x}_2)(a_1 {\bf x}_1+a_2{\bf x}_2)^{\ast}$ is invertible for
        $\eta_0=\frac{\lambda_0\mu_0}{\lambda_0 \N(a_2)+\mu_0 \N(a_1)}$. We continue similarly as in Case~2.

    \item Let ${\bf x}_2^{\ast}{\bf x}_2\neq 0$,
    $(a_1 {\bf x}_1+a_2{\bf x}_2)^{\ast}(a_1 {\bf x}_1+a_2{\bf x}_2)\neq 0$, and $q=4$.\smallskip

        Fix a nonzero $a_2$. From~(\ref{i4}) we know that the equation
        \begin{equation}\label{eq17}
        \Tr(t)^2+\Tr(t)+\N(t)=0
        \end{equation}
        has four nonzero solutions $t_1,t_2,t_3,t_4\in \FF_{4^2}$.  We will first consider the case
        \begin{equation}\label{eq21}
         a_1\in\left\{a_2\frac{{\bf x}_2^{\ast}{\bf x}_2}{{\bf x}_2^{\ast}{\bf x}_1}t_j\ :\ j\in\{1,2,3,4\}\right\}.
        \end{equation}
        Since the characteristic of $\FF_{4^2}$ is two, (\ref{eq17}) implies that
        \begin{equation}\label{eq18}
        \Tr(t_j)+1\notin \{0,1\},
        \end{equation}
        so indeed
        \begin{equation}\label{eq22}
        (a_1 {\bf x}_1+a_2{\bf x}_2)^{\ast}(a_1 {\bf x}_1+a_2{\bf x}_2)=\N(a_2){\bf x}_2^{\ast}{\bf x}_2\big(\Tr(t_j)+1\big)\neq 0.
        \end{equation}
        Recall that $\FF=\{0,1,\imath,\imath^2\}$, where $\imath\in\FF_{4^2}$ satisfies $\imath^2=\imath+1$.
        Let $$\eta=\frac{\imath}{(a_1 {\bf x}_1+a_2{\bf x}_2)^{\ast}(a_1 {\bf x}_1+a_2{\bf x}_2)}.$$ By Corollary~\ref{handbook},
        \begin{align*}
        M_1&:=I+\eta \imath^2 (a_1 {\bf x}_1+a_2{\bf x}_2)(a_1 {\bf x}_1+a_2{\bf x}_2)^{\ast}\ \textrm{is singular,}\\
        M_2&:=I+\eta \imath (a_1 {\bf x}_1+a_2{\bf x}_2)(a_1 {\bf x}_1+a_2{\bf x}_2)^{\ast}\ \textrm{is invertible,}\\
        M_3&:=I+\eta  (a_1 {\bf x}_1+a_2{\bf x}_2)(a_1 {\bf x}_1+a_2{\bf x}_2)^{\ast}\ \textrm{is invertible.}
        \end{align*}
        Let $\mu:=\eta \imath^2 \N(a_2)$. It follows from (\ref{eq22}), (\ref{eq18}) that $\mu{\bf x}_2^{\ast}{\bf x}_2=\frac{1}{\Tr(t_j)+1}\neq -1$, so
        $$I+\mu{\bf x}_2{\bf x}_2^{\ast}\ \textrm{is invertible}$$
        by Corollary~\ref{handbook}.
        Corollary~\ref{handbook}, a short calculation, and (\ref{eq17}) imply that
        \begin{align*}
        \eta \imath^2 \N(a_1){\bf x}_1^{\ast}(I+\mu{\bf x}_2{\bf x}_2^{\ast})^{-1}{\bf x}_1&=\eta \imath^2 \N(a_1){\bf x}_1^{\ast}\left(I-\frac{\mu}{1+\mu{\bf x}_2^{\ast}{\bf x}_2}{\bf x}_2{\bf x}_2^{\ast}\right){\bf x}_1\\
        &=\frac{\N(t_j)}{\Tr(t_j)^2+\Tr(t_j)}=-1,
        \end{align*}
        so, by Corollary~\ref{handbook},
        \begin{align*}
        N_1&:=I+\mu{\bf x}_2{\bf x}_2^{\ast}+\eta \imath^2 \N(a_1){\bf x}_1{\bf x}_1^{\ast}\ \textrm{is singular,}\\
        N_2&:=I+\mu{\bf x}_2{\bf x}_2^{\ast}+\eta \imath\N(a_1){\bf x}_1{\bf x}_1^{\ast}\ \textrm{is invertible,}\\
        N_3&:=I+\mu{\bf x}_2{\bf x}_2^{\ast}+\eta \N(a_1){\bf x}_1{\bf x}_1^{\ast}\ \textrm{is invertible.}
        \end{align*}
        Moreover,
        \begin{equation}\label{eq23}
        \rank(N_2-M_3)=1=\rank(N_3-M_2).
        \end{equation}
        From the assumptions it follows that $\Phi(I+\mu{\bf x}_2{\bf x}_2^{\ast})=\Phi(I+\lambda{\bf x}_1{\bf x}_1^{\ast})$ for some nonzero $\lambda$. Pick $\nu\in\{\eta \N(a_1),\eta \imath \N(a_1)\}$ such that $\nu\neq \lambda$. Then $\Phi(I+\lambda{\bf x}_1{\bf x}_1^{\ast})$, $\Phi(I+\mu{\bf x}_2{\bf x}_2^{\ast}+\nu{\bf x}_1{\bf x}_1^{\ast})$, and $\Phi(I+\nu{\bf x}_1{\bf x}_1^{\ast})$ are all adjacent, so $\Phi(I+\mu{\bf x}_2{\bf x}_2^{\ast}+\nu{\bf x}_1{\bf x}_1^{\ast})\in \Phi(l_{{\bf x}_1})$. Consequently, $\Phi(N_2),\Phi(N_3)\in \Phi(l_{{\bf x}_1})$. Pick $N\in\{N_2,N_3\}$ such that $\Phi(N)\neq \Phi(I)$. By (\ref{eq23}), there is $M\in\{M_2,M_3\}$ such that $M$ and $N$ are adjacent. Since $\Phi(M)$, $\Phi(N)$, $\Phi(I)$ are all adjacent, we deduce that $\Phi(M)\in \Phi(l_{{\bf x}_1})$. Consequently $\Phi(l_{a_1 {\bf x}_1+a_2{\bf x}_2})\subseteq \Phi(l_{{\bf x}_1})$ for all scalars $a_1$ from the set~(\ref{eq21}).

        For a fixed $a_1'$ from~(\ref{eq21}), denote ${\bf y}_2=a_1' {\bf x}_1+a_2{\bf x}_2$. Since ${\bf x}_1^{\ast}{\bf y}_2\neq 0$ and
        ${\bf y}_2^{\ast}{\bf y}_2\neq 0$ by (\ref{eq22}), we can use the procedure above to see that $\Phi(l_{b_1' {\bf x}_1+{\bf y}_2})\subseteq \Phi(l_{{\bf x}_1})$ for all
        \begin{equation*}
         b_1'\in\left\{\frac{{\bf y}_2^{\ast}{\bf y}_2}{{\bf y}_2^{\ast}{\bf x}_1}t_k\ :\ k\in\{1,2,3,4\}\right\}.
        \end{equation*}
        Now, $b_1' {\bf x}_1+{\bf y}_2=(a_1'+b_1') {\bf x}_1+a_2{\bf x}_2$, where
        $$a_1'+b_1'=a_2\frac{{\bf x}_2^{\ast}{\bf x}_2}{{\bf x}_2^{\ast}{\bf x}_1}t_j+\frac{{\bf y}_2^{\ast}{\bf y}_2}{{\bf y}_2^{\ast}{\bf x}_1}t_k=a_2\frac{{\bf x}_2^{\ast}{\bf x}_2}{{\bf x}_2^{\ast}{\bf x}_1}\Big(t_j+\big(\Tr(t_j)+1\big)t_k\Big).$$ Hence, we have proved that $\Phi(l_{a_1 {\bf x}_1+a_2{\bf x}_2})\subseteq \Phi(l_{{\bf x}_1})$ for all
        \begin{equation}\label{eq19}
        a_1\in\left\{a_2\frac{{\bf x}_2^{\ast}{\bf x}_2}{{\bf x}_2^{\ast}{\bf x}_1}\Big(t_j+\big(\Tr(t_j)+1\big)t_k\Big)\ :\ j,k\in\{1,2,3,4\}\right\}.
        \end{equation}
        For such scalars we have $$(a_1 {\bf x}_1+a_2{\bf x}_2)^{\ast}(a_1 {\bf x}_1+a_2{\bf x}_2)=\N(a_2){\bf x}_2^{\ast}{\bf x}_2\big(\Tr(t_j)+1\big)\big(\Tr(t_k)+1\big)\neq 0$$ by~(\ref{eq18}).
        By~(\ref{i44}), the set in~(\ref{eq19}) consists of 11 nonzero elements. On the contrary, given a nonzero $a_2$, (\ref{i1}) shows that there are precisely $16-1-4=11$ nonzero scalars $a_1\in \FF_{4^2}$ satisfying $$(a_1 {\bf x}_1+a_2{\bf x}_2)^{\ast}(a_1 {\bf x}_1+a_2{\bf x}_2)=\Tr(a_1\inv{a}_2 {\bf x}_2^{\ast}{\bf x}_1)+\N(a_2){\bf x}_2^{\ast}{\bf x}_2\neq 0,$$ so we proved the conclusion for all $a_1$ that fits the assumptions of Case~4.\qedhere
\end{myenumerate}
\end{proof}

We are now ready to prove the main result of this paper.

\begin{proof}[Proof of Theorem~\ref{glavni}] Let $\Gamma$ be a core of  $\hgl$ and let $\Phi$ be any retraction of $\hgl$ onto $\Gamma$. We separate three cases. In first we  obtain the desired $\Gamma=\hgl$, while the other two lead to a contradiction.
\begin{myenumerate}{Case}
\item Suppose there exist two $q$-cliques in $\Gamma$ with a common matrix.

For any matrix $A$ in $\Gamma$, $\Phi$ maps its $q$-cliques into its $q$-cliques, that is, $\Phi(l_{{\bf x}}^A)=l_{f({\bf x})}^{A}$  where $f$ is map on the set $\{{\bf x}\ :\ \langle{\bf x}\rangle\in V_{\inv{A^{-1}}}\}$. Since $l_{{\bf x}_1}^A=l_{{\bf x}_2}^A$ for linearly dependent nonzero ${\bf x}_1$ and ${\bf x}_2$, $f$ induces a map $\widehat{f}$ on $V_{\inv{A^{-1}}}$ given by $\widehat{f}(\langle{\bf x}\rangle)=\langle f({\bf x})\rangle$.

We first show that $\widehat{f}$ is bijective, so all $q$-cliques of $A$ are in $\Gamma$. Suppose the opposite, that is, $\Phi(l_{{\bf x}}^A)=\Phi(l_{{\bf y}}^A)$ for two distinct $q$-cliques. Then, we claim that there are only two possibilities:
\begin{enumerate}
\item If $l_{{\bf x}}^A$ and $l_{{\bf y}}^A$ are $A$-orthogonal, then any pair of distinct $q$-cliques in $\Gamma$ that share a common matrix, denoted by $B$, are $B$-non-orthogonal. Moreover, $\Phi(l_{{\bf z}}^A)\neq \Phi(l_{{\bf w}}^A)$ for any pair of $q$-cliques of $A$ that are $A$-non-orthogonal.
\item If $l_{{\bf x}}^A$ and $l_{{\bf y}}^A$ are $A$-non-orthogonal, then any pair of distinct $q$-cliques in $\Gamma$ that share a common matrix, denoted by $B$, are $B$-orthogonal. Moreover, $\Phi(l_{{\bf z}}^A)\neq \Phi(l_{{\bf w}}^A)$ for any pair of distinct $q$-cliques of $A$ that are $A$-orthogonal.
\end{enumerate}

To prove claim (i), let $l_{{\bf x}}^A$ and $l_{{\bf y}}^A$ be $A$-orthogonal and assume that $q$-cliques $l_{{\bf x}_1}^B$ and $l_{{\bf y}_1}^B$ are distinct, $B$-orthogonal, and lie in $\Gamma$. Then, by Lemma~\ref{lemma-pari2-simetrija}, there is a matrix $P$ such that $PBP^{\ast}=A$, $Pl_{{\bf x}_1}^B P^{\ast}=l_{{\bf x}}^A$, and $Pl_{{\bf y}_1}^B P^{\ast}=l_{{\bf y}}^A$, so $\Psi(X):=\Phi(PXP^{\ast})$ is a nonbijective endomorphism of $\Gamma$ as $\Psi(l_{{\bf x}_1}^B)=\Psi(l_{{\bf y}_1}^B)$. This is a contradiction, since $\Gamma$ is a core. Assume now that $\Phi(l_{{\bf z}}^A)=\Phi(l_{{\bf w}}^A)$ for a pair of $A$-non-orthogonal $q$-cliques. Let $l_{{\bf z}_1}^C$ and $l_{{\bf w}_1}^C$ be the two $q$-cliques from the assumption of Case~1. As  proved, they are $C$-non-orthogonal, so by Lemma~\ref{lemma-pari2-simetrija} there is an invertible $Q$ such that $QCQ^{\ast}=A$, $Ql_{{\bf z}_1}^C Q^{\ast}=l_{{\bf z}}^A$, and $Ql_{{\bf w}_1}^C Q^{\ast}=l_{{\bf w}}^A.$ Hence, $\Psi'(X):=\Phi(QXQ^{\ast})$ is a nonbijective endomorphism of $\Gamma$, a contradiction.

The proof of claim (ii) is dual to the proof of claim~(i).

To continue, assume firstly that $l_{{\bf x}}^A$ and $l_{{\bf y}}^A$ are $A$-orthogonal. Then $n\geq 4$ by Lemma~\ref{lemma-pari2-simetrija}. We claim that $\widehat{f}$ is an endomorphism of $\inv{H(n-1,q^2)}$, i.e., an endomorphism of the complement of the point graph of the hermitian polar space that is defined by $\inv{A^{-1}}$.
In fact, if $\langle{\bf z}\rangle$ and $\langle{\bf w}\rangle$ are two adjacent vertices in $\inv{H(n-1,q^2)}$, then ${\bf z}^{\ast} A^{-1}{\bf w}\neq 0$, so (i) implies that $\Phi(l_{{\bf z}}^A)\neq \Phi(l_{{\bf w}}^A)$. Since both $q$-cliques $l_{f({\bf z})}^A=\Phi(l_{{\bf z}}^A)$ and $l_{f({\bf w})}^A=\Phi(l_{{\bf w}}^A)$ are in $\Gamma$, (i) shows that $f({\bf z})^{\ast} A^{-1} f({\bf w})\neq 0$, so $\widehat{f}(\langle {\bf z} \rangle)$ and $\widehat{f}(\langle {\bf w} \rangle)$ are adjacent in $\inv{H(n-1,q^2)}$ and $\widehat{f}$ is an endomorphism. Since $\widehat{f}(\langle{\bf x}\rangle)=\widehat{f}(\langle{\bf y}\rangle)$, it is nonbijective, a contradiction by Lemma~\ref{cameron}.

Assume now that $l_{{\bf x}}^A$ and $l_{{\bf y}}^A$ are $A$-non-orthogonal. By (ii) and the assumption of Case~1, there exist two $q$-cliques of some $B$ that are pairwise $B$-orthogonal. It follows from Lemma~\ref{lemma-pari2-simetrija} that $n\geq 4$. Moreover, all $q$-cliques of $A$ in $\Gamma$ are pairwise $A$-orthogonal, so they form a subspace in variety $V_{\inv{A^{-1}}}$. Hence, by Theorem~\ref{bose_podprostor}, $\Gamma$ contains at most $\frac{q^{2\lfloor\frac{n}{2}\rfloor}-1}{q^2-1}$ $q$-cliques of $A$. By Lemma~\ref{lemma-q-(q-1)}, there are $|V_{\inv{A^{-1}}}|$ $q$-cliques of $A$ in total, so by Theorem~\ref{kardinalnost} there is a $q$-clique of $A$ in $\Gamma$, into which at least
$$\frac{|V_{\inv{A^{-1}}}|}{\frac{q^{2\lfloor\frac{n}{2}\rfloor}-1}{q^2-1}}=
\left\{
\begin{array}{ll}q^{n-1}+1\ &\textrm{if}\ n\ \textrm{is even}\\
q^{n}+1\ &\textrm{if}\ n\ \textrm{is odd}
\end{array}
\right\}\geq 4$$
$q$-cliques of $A$ are mapped by $\Phi$. Choose four of these $q$-cliques: $l_{{\bf z}_1}^A, l_{{\bf z}_2}^A, l_{{\bf z}_3}^A, l_{{\bf z}_4}^A$. As they are mapped into the same $q$-clique, they are $A$-non-orthogonal by (ii). Let $A=RR^{\ast}$, $\Psi''(X):=\Phi(RXR^{\ast})$, and ${\bf w}_i:=R^{-1}{\bf z}_i$. Then, $q$-cliques $l_{{\bf w}_i}$ are pairwise non-orthogonal and $\Psi''(l_{{\bf w}_i})=\Psi''(l_{{\bf w}_j})$ for all $i,j$. By Lemma~\ref{lemma-mesano} there exists a triple $(a_2,a_3,a_4)\neq (0,0,0)$ such that
\begin{equation}\label{eq35}
\left(\sum_{i=1}^{4} a_i{\bf w}_i\right)^{\ast}\left(\sum_{i=1}^{4} a_i{\bf w}_i\right)=0
\end{equation}
and
\begin{equation}\label{eq34}
{\bf w}_1^{\ast}\left(\sum_{i=1}^{4} a_i{\bf w}_i\right)=0
\end{equation}
for all $a_1\in\FFq2$.  Pick a nonzero scalar among $a_2,a_3,a_4$. We may assume that $a_2$ is such.
By Lemma~\ref{lemma-main},
\begin{equation}\label{i15}
\Psi''(l_{a_3{\bf w}_3+ a_4{\bf w}_4})\subseteq \Psi''(l_{{\bf w}_4})=\Psi''(l_{{\bf w}_1}).
\end{equation}
Since $a_2{\bf w}_1^{\ast}{\bf w}_2+{\bf w}_1^{\ast}( a_3{\bf w}_3+ a_4{\bf w}_4)=0$ by~(\ref{eq34}), we deduce that
${\bf w}_1^{\ast}( a_3{\bf w}_3+ a_4{\bf w}_4)\neq 0$. Consequently Lemma~\ref{lemma-main} and (\ref{i15}) implies that \begin{equation}\label{i16}
\Psi''(l_{a_1{\bf w}_1+a_3{\bf w}_3+ a_4{\bf w}_4})\subseteq \Psi''(l_{{\bf w}_1})=\Psi''(l_{{\bf w}_2})
\end{equation}
for all $a_1$. Pick $a_1$ such that ${\bf w}_2^{\ast}(a_1{\bf w}_1+ a_3{\bf w}_3+ a_4{\bf w}_4)\neq 0$. Then
$\Psi''(l_{\sum_{i=1}^{4} a_i{\bf w}_i})\subseteq \Psi''(l_{{\bf w}_2})=\Psi''(l_{{\bf w}_1})$ by Lemma~\ref{lemma-main} and (\ref{i16}). By (\ref{eq35}), $l_{\sum_{i=1}^{4} a_i{\bf w}_i}$ is a $q$-clique of $I$, so $\Psi''(l_{\sum_{i=1}^{4} a_i{\bf w}_i})=\Psi''(l_{{\bf w}_1})$, that is, $\Phi(l_{\sum_{i=1}^{4} a_i{\bf z}_i}^A)=\Phi(l_{{\bf z}_1}^A)$. This contradicts~(ii), since~(\ref{eq34}) implies that $l_{\sum_{i=1}^{4} a_i{\bf z}_i}^A$ and $l_{{\bf z}_1}^A$ are $A$-orthogonal.

We have now shown that $\widehat{f}$ is bijective, that is, all $q$-cliques of $A$ are in $\Gamma$. Recall that the matrix $A$ in $\Gamma$ was arbitrary. Since matrices in a $q$-clique have the same determinant,  Proposition~\ref{lemma-povez-det} implies that $\Gamma$ is a subgraph induced by a set $\bigcup_{\nu\in J} \det^{-1} \nu$, for some $J\subseteq \FF$. If $J\neq \FF$ that is
$\Gamma\neq\hgl$, then there is some invertible hermitian matrix $D\notin \Gamma$. Since $\Phi(D)\in \Gamma$, it follows that $\det\Phi(D)\neq \det D$. Since the determinant on any $(q-1)$-clique attains all values from $\FF\backslash\{0\}$, each $(q-1)$-clique of $D$ contains a matrix whose determinant equals $\det\Phi(D)$. So all these matrices are pairwise nonadjacent, contained in $\Gamma$, and fixed by $\Phi$, since it is a retraction. On the contrary, the number of pairwise nonadjacent neighbors of $\Phi(D)$ with determinant equal to $\det\Phi(D)$ equals the numbers of $q$-cliques of $\Phi(D)$, which, by Lemma~\ref{lemma-q-(q-1)}, is smaller than the number of $(q-1)$-cliques of $D$ . Since $\Phi$ preserves adjacency, we get a contradiction, so $J=\FF$ and $\Gamma=\hgl$.

\item Suppose the assumption of Case~1 is not true, and there exist a $q$-clique and a $(q-1)$-clique in $\Gamma$ with a common matrix.

    By Proposition~\ref{lemma-povez-det}, the graph induced by $\det^{-1}(\lambda)$ is connected for any $\lambda\in\FF\backslash\{0\}$. Since, by the assumption, there are no pairs of $q$-cliques with a common matrix in $\Gamma$, it follows that $\Phi(\det^{-1}(\lambda))$ is a $q$-clique, so all matrices in it have the same determinant. Since there exists a $(q-1)$-clique in $\Gamma$, determinant on matrices in $\Gamma$ attains all values from $\FF\backslash\{0\}$, so $q$-cliques $\Phi(\det^{-1}(\lambda)); \lambda \in\FF\backslash\{0\}$ must be disjoint, since matrices in a single $q$-clique have the same determinant. Consequently, $\Gamma$ consists of $q-1$ $q$-cliques, with some additional edges between the $q$-cliques. Since three pairwise adjacent matrices lies always in a maximal clique, the determinants of these three matrices are either all the same or all different. Consequently, for $\lambda_1\neq \lambda_2$, there is at most one edge from any fixed matrix in $\Phi(\det^{-1}(\lambda_1))$ to the $q$-clique $\Phi(\det^{-1}(\lambda_2))$. By the assumption, there is some matrix in $\Gamma$ that is a member of a $q$-clique and $(q-1)$-clique. Since $\hgl$, and consequently $\Gamma$, is vertex transitive, the same holds for all matrices in $\Gamma$, so we have completely determined edges between $q$-cliques $\Phi(\det^{-1}(\lambda))$. Consequently, $\Gamma$ is determined as well (cf.~Figure~2).
\begin{figure}
\centering
\begin{tabular}{@{}cc@{}}
\psfrag{X}{{\tiny $1$}} \psfrag{Y}{{\tiny $2$}} \psfrag{Z}{{\tiny $3$}} \psfrag{W}{{\tiny $4$}}
\includegraphics[width=0.4\textwidth]{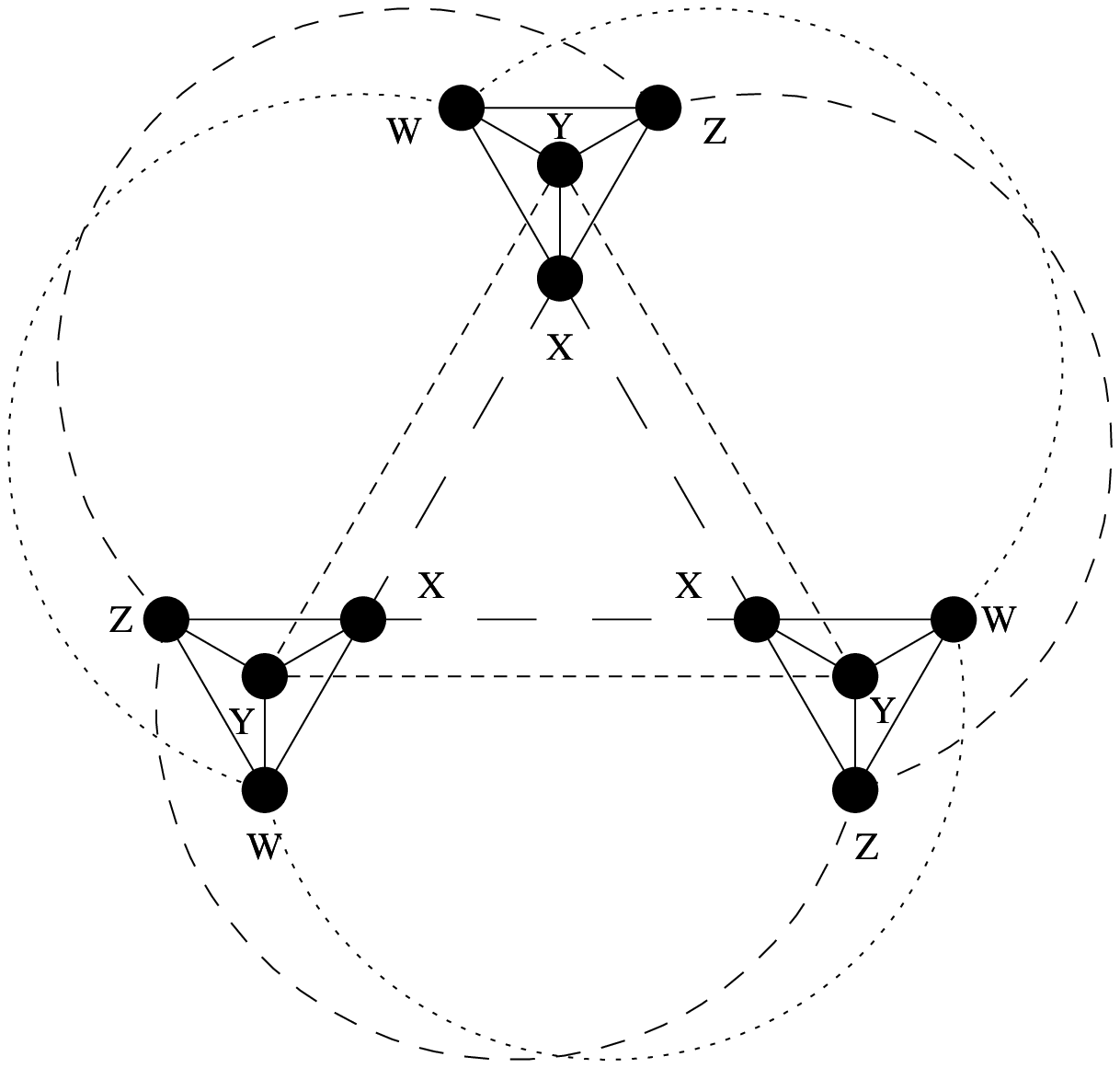}&
\psfrag{A}{{\tiny $1$}} \psfrag{B}{{\tiny $2$}} \psfrag{C}{{\tiny $3$}} \psfrag{D}{{\tiny $4$}}
\includegraphics[width=0.4\textwidth]{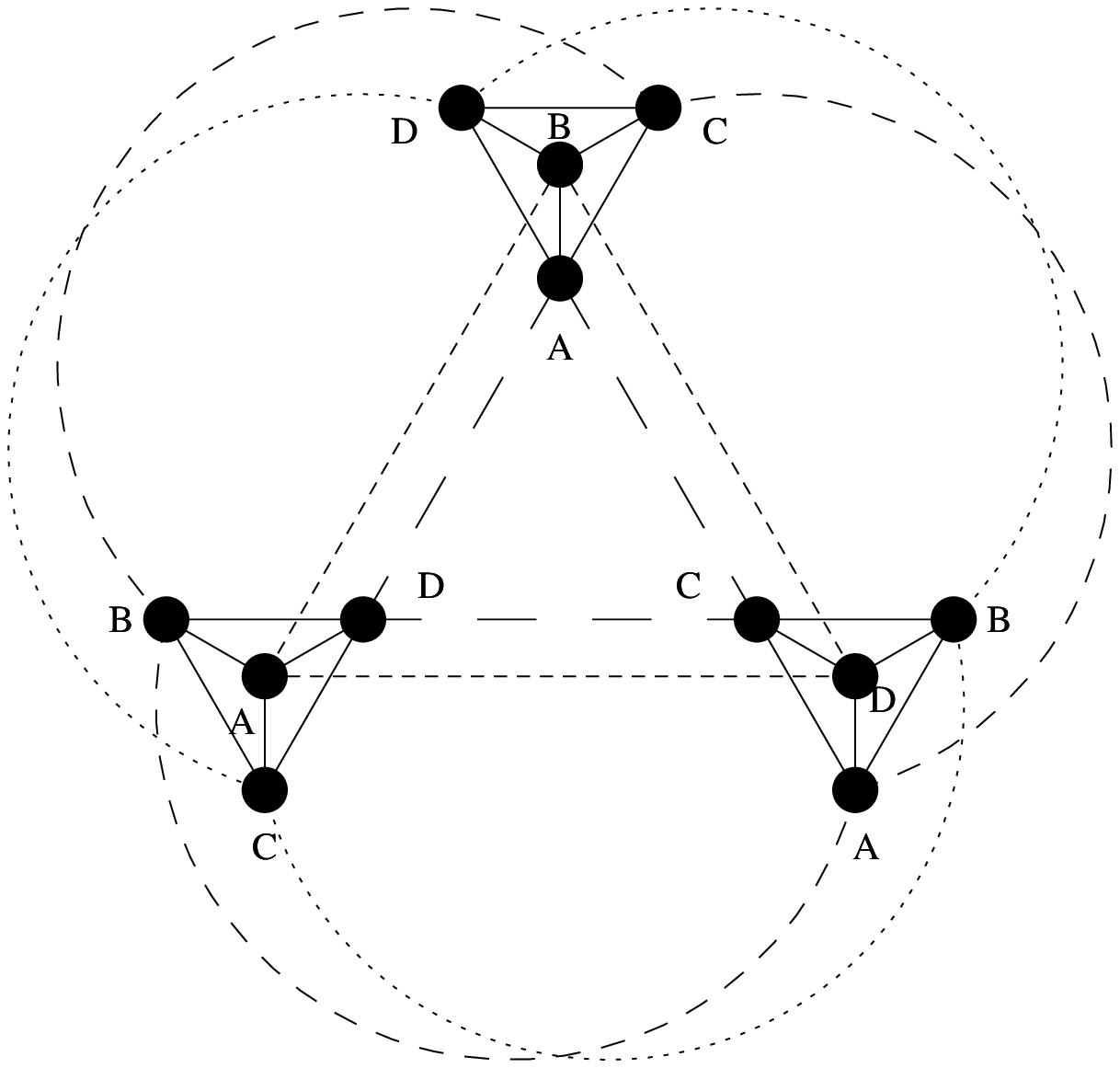}
\end{tabular}
\caption{The graph $\Gamma$ from Case~2 if $q=4$. An enumeration on the left and a 4-coloring on the right. Edges of 3-cliques are dashed/dotted.}
\end{figure}
We now show that $\chi(\Gamma)=q$, which is a desired contradiction by Lemma~\ref{klika-kromaticno}. Enumerate the vertices of a fixed $q$-clique by $1,2,\ldots,q$. Then enumerate the vertices in other $q$-cliques in the following way: matrices that form a $(q-1)$-clique, together with vertex 1 from the first $q$-clique, enumerate with 1;  matrices that form a $(q-1)$-clique, together with vertex 2 from the first $q$-clique, enumerate with 2; etc. To obtain a desired $q$-coloring, change the enumeration inside $q$-cliques as described below:
\begin{align*}
\textrm{1st}\ &q\textrm{-clique}:\ (1,2,\ldots,q)\mapsto (1,2,\ldots,q)\\
\textrm{2nd}\ &q\textrm{-clique}:\ (1,2,\ldots,q)\mapsto (q,1,2,\ldots,q-1)\\
\textrm{3rd}\ &q\textrm{-clique}:\ (1,2,\ldots,q)\mapsto (q-1,q,1,2,\ldots,q-2)\\
&\vdots\\
(q-1)\textrm{-th}\ &q\textrm{-clique}:\ (1,2,\ldots,q)\mapsto (3,4,\ldots,q,1,2)\\
\end{align*}

\item Suppose the assumptions of Case~1 and~2 are not true.

Then $\Gamma$ is a single $q$-clique, a contradiction by Lemma~\ref{klika-kromaticno}.\qedhere
\end{myenumerate}
\end{proof}

\end{document}